\allowdisplaybreaks \numberwithin{equation}{section}
\numberwithin{equation}{section}
\newtheorem{theorem}{Theorem}[section]
\newtheorem{proposition}[theorem]{Proposition}
\newtheorem{corollary}[theorem]{Corollary}
\newtheorem{lemma}[theorem]{Lemma}
\newtheorem*{Yudovich's Theorem}{Yudovich's Theorem}
\theoremstyle{definition}
\newtheorem{definition}[theorem]{Definition}
\theoremstyle{remark}
\newtheorem{remark}[theorem]{Remark}
\newcommand{\Om}{\Omega}
\newcommand{\Z}{\mathbb{Z}}
\newcommand{\N}{\mathbb{N}}
\newcommand{\divv}{\operatorname{div}}
\def\subsection{\@startsection{subsection}{1}%
	\z@{1\linespacing\@plus\linespacing}{0.1\linespacing}%
	{\normalfont\bfseries}}
\def\subsubsection{\@startsection{subsubsection}{1}%
	\z@{1\linespacing\@plus\linespacing}{0.1\linespacing}%
	{\normalfont}}
\begin{document}

	\title
	{HELICAL KELVIN WAVES FOR THE 3D EULER EQUATIONS}
	\author{Daomin Cao, Boquan Fan, Rui Li, Guolin Qin}
	    \address{Institute of Applied Mathematics, AMSS, Chinese Academy of Sciences, Beijing 100190, and University of Chinese Academy of Sciences, Beijing 100049,  P.R. China}
        \email{dmcao@amt.ac.cn}
        \address{Institute of Applied Mathematics, AMSS, Chinese Academy of Sciences, Beijing 100190, and University of Chinese Academy of Sciences, Beijing 100049,  P.R. China}
        \email{fanboquan22@mails.ucas.ac.cn}
        \address{Institute of Applied Mathematics, AMSS, Chinese Academy of Sciences, Beijing 100190, and University of Chinese Academy of Sciences, Beijing 100049,  P.R. China}
        \email{lirui2021@amss.ac.cn}
        \address{School of Mathematical Science, Peking University, Beijing 100871, People's Republic of China}
        \email{qinguolin18@mails.ucas.ac.cn}

	
	\begin{abstract}
		
		Helical Kelvin waves were conjectured to exist for the 3D Euler equations in Lucas and Dritschel \cite{LucDri} (as well as in  \cite{Chu}) by studying dispersion	relation for infinitesimal linear perturbations of a circular helically symmetric vortex patch. This paper aims to rigorously establish  the existence of these $m$-fold symmetric  helical Kelvin waves, in both simply and doubly  connected cases, for the 3D Euler equations. The construction is based on linearization of contour dynamics equations and bifurcation theory. Our results rigorously verify the prediction in  aforementioned papers and  extend  $m$-waves of Kelvin from the 2D Euler equations   to the 3D helically symmetric Euler equations. 
	\end{abstract}

	\maketitle
	\section{Introduction}\label{sec1}
     As the oldest mathematical idealization of tip vortices in the wakes behind screws, propellers or wind turbines, helical vortices are commonly observed in  nature and in numerous industrial applications  and have garnered increasing attention of physicists and mathematicians since the pioneering work of Joukowsky in 1912 \cite{Joukowski1912}. Interested readers are referred to \cite{Oku, Wood2020} for reviews on helical vortices. 
     
    In this paper, we shall explore the existence of non-trivial helical vortex patches, which we call \emph{helical Kelvin waves} and which were conjectured to exist  based on the dispersion relation studies conducted by Lucas and Dritschel \cite{LucDri} and Chu and Llewellyn \cite{Chu}. We aim to give a  rigorous proof of existence of such helical vortex patches by using bifurcation theory. Let us begin by introducing the governing equations and providing a historical discussion on helical vortices.
     \subsection{The 3D Euler equations under helical symmetry}
     	We consider the motion of an inviscid incompressible flow  governed by the following Euler equations in $\mathbb R^3$:
     \begin{equation}\label{1-1}
     	\begin{cases}
     		\partial_t\mathbf{v}+(\mathbf{v}\cdot \nabla)\mathbf{v}=-\nabla P,\ \ &\text{in}\ \  \mathbb R^3\times (0,T),\\
     		\nabla\cdot \mathbf{v}=0,\ \ &\text{in}\ \ \mathbb R^3\times (0,T),\\ 
     		\mathbf{v}(\cdot, 0)=\mathbf{v}_0(\cdot), \ \ &\text{in}\ \ \mathbb R^3,
     	\end{cases}
     \end{equation}
     where  $ \mathbf{v}=(v_1,v_2,v_3) $ is the velocity field, $ P$ is the scalar pressure. Let  $\pmb{\omega}:=\nabla\times\mathbf{v}$  be the vorticity of the fluid. By taking curl of the first equation in \eqref{1-1},  we obtain the following equations for vorticity:
     \begin{align}\label{1-2}
     	\begin{cases}
     		\partial_t \pmb{\omega}+(\mathbf{v}\cdot\nabla)\pmb{\omega}=(\pmb{\omega}\cdot\nabla)\mathbf{v},
     		\ \ &\text{in}\ \   \mathbb R^3\times (0,T),\\
     		\mathbf{v}=\nabla\times \pmb\psi,  \quad  -\Delta \pmb\psi=\pmb{\omega},\ \ &\text{in}\ \  \mathbb R^3\times (0,T),\\ 
     		\pmb{\omega}(\cdot, 0)=\nabla\times \mathbf{v}_0(\cdot), \ \ &\text{in}\ \  \mathbb R^3,
     	\end{cases}
     \end{align}
     where \pmb{$\psi$} is the stream function. The above system is usually referred to as the Euler equations in vorticity form. 
     
     We study solutions of the 3D Euler equations with helical symmetry. To be precise, let us define  for any $ \theta\in\mathbb R$ the rotation by an angle $\theta$ around the $x_3$-axis
     \begin{equation}\label{def-of-Rtheta}
     	Q_\theta=\begin{pmatrix}
     		R_\theta & 0  \\
     		0 & 1
     	\end{pmatrix}  \  \ \text{with}\   \ R_\theta=\begin{pmatrix}
     		\cos\theta & \sin\theta  \\
     		-\sin\theta &\cos\theta
     	\end{pmatrix}.
     \end{equation}
     For any helical pitch $ h>0 $, we introduce a group of one-parameter helical transformations $ \mathcal{H}_h=\{H_{\theta}:\mathbb{R}^3\to\mathbb{R}^3\mid  \theta\in\mathbb{R}\} $, where
     \begin{equation*}
     	H_{\theta}(x)=Q_\theta(x)+\begin{pmatrix}
     		0  \\
     		0  \\ 
     		h\theta
     	\end{pmatrix}=\begin{pmatrix}
     		x_1\cos\theta+x_2\sin\theta  \\
     		-x_1\sin\theta +x_2\cos\theta  \\ 
     		x_3+h\theta
     	\end{pmatrix}.
     \end{equation*}
     Following \cite{DDMW2, dutrifoy1999existence, ettinger2009global}, we say that
     a scalar function $f:\mathbb{R}^3\rightarrow\mathbb R $ is  helical, if $$ f(H_{\theta}(x))=f(x),  \ \ \forall\, \theta\in\mathbb{R},\  x\in \mathbb R^3, $$
     and a vector field $ \mathbf{v}:\mathbb{R}^3\rightarrow\mathbb R^3$ is   helical, if $$ \mathbf{v}(H_{\theta}(x))= Q_{\theta} \mathbf{v}(x),\ \ \forall\, \theta\in\mathbb{R},\  x\in \mathbb R^3. $$
     Helical solutions of \eqref{1-1} are then naturally
     defined as follows.
     \begin{definition}\label{HelicalSolution}
     	A pair $\left(\mathbf{v},P\right)$ is a helical solution of \eqref{1-1} if it satisfies \eqref{1-1} and both $\mathbf{v}$ and $P$ are helical  in the above sense.
     \end{definition}
      Assuming, in addition,  that helical solutions satisfy the  \textit{no-swirl condition}:
     \begin{equation}\label{5}
     	\mathbf{v}\cdot\vec{\zeta}=0,
     \end{equation}
     where $\vec{\zeta}=(x_2,-x_1,h)$ is the field of tangents of symmetry lines of $\mathcal{H}_h$. Direct calculations show that, under the assumption \eqref{5}, the vorticity field $\boldsymbol{\omega}$ satisfies
     \begin{equation}\label{1-9}
     	\boldsymbol{\omega}=\frac{\omega_3}{h}\vec{\zeta},
     \end{equation}
     where $\omega_3=\partial_1 v_2-\partial_2 v_1$ is the third component of $\boldsymbol{\omega}$, which is also a helical function (see e.g. \cite{DDMW2, ettinger2009global} for the proof). If we have the value of $\omega_3$ in $\mathbb{R}^2=\mathbb{R}^3\cap\left\{x_3=0\right\}$, we can recover the value in the whole space $\mathbb{R}^3$ by the helical symmetry. In what follows, we may still use $x:=(x_1,x_2)$ to denote points in $\mathbb{R}^2$.  Hence, if we define $\omega(x_1,x_2,t)=\omega_3(x_1,x_2,0,t)$, then $\omega$ was proved in \cite{ettinger2009global} (see also \cite{DDMW2}) to satisfy the following 2D vorticity-stream equations:
     \begin{align}\label{eq1.6}
     	\begin{cases}
     		\partial_t\omega+\nabla^\perp\varphi\cdot\nabla\omega=0,&\text{in}\ \mathbb{R}^2\times (0,T),\\
     		\ \mathcal{L}_H\varphi=\omega,     &\text{in}\ \mathbb{R}^2\times (0,T),\\
     		\omega(\cdot, 0)=\omega_0,&\text{in}\ \mathbb{R}^2,
     	\end{cases}
     \end{align}
     where $\nabla^\perp:=(\partial_{x_2},-\partial_{x_1})^{T}$, $\varphi$ is usually called stream function, and $\mathcal{L}_H(\cdot)=-\divv\left(K_H(x)\nabla\left(\cdot\right)\right)$ is an elliptic operator of divergence type with
     \begin{equation}\label{KH}
     	K_H(x)=\frac{1}{h^2+x_1^2+x_2^2}\begin{pmatrix}
     		h^2+x_2^2&-x_1x_2\\
     		-x_1x_2&h^2+x_1^2
     	\end{pmatrix}.
     \end{equation}
    Conversely, it was also proved in \cite{DDMW2, ettinger2009global} that one can recover a helical symmetric solution to the 3D Euler equations \eqref{1-2} from an arbitrary solution of \eqref{eq1.6}. 
     
    In recent years, there has been tremendous interest in the mathematical study of helical vortices. The existence and uniqueness of weak solutions to Cauchy problem for \eqref{eq1.6} were initially proved by Ettinger  and Titi \cite{ettinger2009global}. Recent progress on this topic can also be found in \cite{guo2025global,dutrifoy1999existence,bronzi2015global,jiu2017global, ABIDI20162177}. On the other hand, various families of extremely concentrated traveling-rotating helical vortices for \eqref{eq1.6} were constructed in e.g. \cite{CWmathann, DDMW2, GM2024,cao2024concentrated,cao2023helical,cao2023helical2,cao2023desingularization} by desingularizing helical vortex filaments via the gluing method or various variational methods.
     
     \bigskip
     
    However, to the best of our knowledge, there is no literature on the existence of non-trivial, non-concentrated helical vortices. In the work of Lucas and Dritschel \cite{LucDri} (as well as  Chu and Llewellyn \cite{Chu}), the authors investigated the dispersion relation for infinitesimal linear perturbations of a trivial solution and conjectured the existence of uniformly rotating  $m$-fold symmetric helical vortex patches. In this paper, we aim to rigorously establish the existence of uniformly rotating   $m$-fold symmetric helical vortex patches, in both simply   and doubly connected cases, for \eqref{eq1.6}.
    
    Note that uniformly rotating $m$-fold symmetric vortex patches for the 2D Euler equations, known as Kelvin waves,  have been extensively studied.  Before presenting our results, let us briefly review the relevant research on Kelvin waves  for the 2D Euler equations.

	\subsection{Kelvin waves for the 2D Euler equations}\label{sec1.2}
Recall the 2D Euler equations in  the vorticity formulation:
	\begin{align}\label{1.2}
	\begin{cases}
	\partial_t\omega+\textbf{v}\cdot\nabla\omega=0,&\text{in}\ \mathbb{R}^2\times (0,T),\\
		\ \textbf{v}=\nabla^\perp(-\Delta)^{-1}\omega,&\text{in}\ \mathbb{R}^2\times (0,T),
	\end{cases}
\end{align}
where $\omega=\partial_{x_1}v_2-\partial_{x_2}v_1$ is the vorticity.  A few coherent vortices have been found, such as rotating disk, sliding dipoles, etc. Among them, we revisit the $m$-waves of Kelvin, which are uniformly rotating patch solutions of \eqref{1.2}. For any integer $m\geq 2$ and positive number $a>0$, the $m$-waves of Kelvin can be expressed in the following form
\begin{equation}\label{1.3}
	\omega^{m,s}=1_{D^{m,s}},\ \   D^{m,s}=\left\{\rho e^{i\phi}:\rho<a+\zeta^{m,s}(\phi)\right\}
\end{equation}
where 
\begin{equation*}
	\zeta^{m,s}(\phi)=s\cos(m\phi)+o(s).
\end{equation*}
Moreover, there exists a $\Omega^{m,s}\in\mathbb{R}$ such that 
\begin{equation*}
	\omega(x,t)=\omega^{m,s}(R_{-\Omega^{m,s}t}x)
\end{equation*}
solves the 2D Euler equations \eqref{1.2} weakly, where the rotation $R_{-\Om^{m,s}t}$ is defined in \eqref{def-of-Rtheta}.

The first example for such solutions was due to Kirchhoff dating back to 1894. He demonstrated that ellipses represent uniformly rotating patch solutions for any aspect ratio, as discussed in \cite{kirchhoff1894vorlesungen}, corresponding to the case where $m=2$. For $m\geq 3$, Kelvin first suggested in 1880 the possibility of the existence of $m$-fold symmetric rotating patches bifurcating from a disc (see e.g. Lamb \cite{lamb1924hydrodynamics}), calculating that an infinitesimal perturbation of the disc with period $m$ would rotate with an angular velocity of $ \frac{m-1}{2m}$. In 1982, Burbea \cite{burbea1982motions} provided an argument showing the existence of such solutions, referring to them as ``$m$-waves of Kelvin", which was later rigorously established by Hmidi, Mateu, and Verdera in \cite{hmidi2013boundary} in 2013.  Moreover, the authors also proved the $C^\infty$-regularity for boundaries of the $m$-waves of Kelvin in  \cite{hmidi2013boundary}. Uniformly rotating analytic patches were then constructed in \cite{CCG} for both the 2D Euler equations and the gSQG equations. Subsequently, by applying bifurcation theory, doubly connected $m$-fold symmetric vortex patches for the planar Euler equations bifurcating from an annulus were shown to exist in \cite{dHMV2016, HMIDI2016799, wang2024degenerate}.   Existence and nonexistence of $m$-fold symmetric vortex patches for Euler equations in the disk were investigated in \cite{dHHM2016, WANG2021124695}. Further exploration of the full branch of solutions can be found in the paper of Hassainia, Masmoudi, and Wheeler \cite{hassainia2020global}. For $m$-fold symmetric patches of other models, interested readers may refer to \cite{dHH2016, DHR2019, hassainia2015v, hmidi2024uniformly, GOMEZSERRANO2019110, GHM2022, garcia2023time,castro2016existence, Emeric, HMIDI2023110142, garcia2023caps} and references therein. We highlight the papers \cite{hmidi2013boundary, hmidi2024uniformly}, which inspired some arguments in this work. We would also like to refer  readers to \cite{symmetry, PARK2022108779} for nonexistence of nontrivial rotating vortices. Stability and instability for Kelvin waves were recently studied in \cite{stabilty}.

\subsection{Main results and ideas of proofs}
Now, we return to the main topic of this paper: the construction of non-trivial  uniformly rotating  $m$-fold symmetric  patch-type solutions for \eqref{eq1.6}. Specifically, we are going to find solutions that we shall call ``helical $m$-waves of Kelvin" (or simply, ``helical Kelvin waves") defined as follows.
\begin{definition}\label{DefHKV}
	Given an integer $m\geq 2$, a domain $D$ and a function $\omega(x,t)=1_D(R_{-\Omega t}x)$, where $R_{-\Om t}$ is defined in \eqref{def-of-Rtheta},  we shall call $\omega$   a helical $m$-wave  of Kelvin if the domain $D$ is $m$-fold symmetric and $\omega$ solves  equations \eqref{eq1.6} in the sense of distribution.
\end{definition}

Being different from the 2D Euler equations, the expression of the operator  $\mathcal L_H$  in equation  \eqref{eq1.6} appears quite complicated, which makes the existence of trivial solutions not obvious. In order to introduce trivial solutions as our starting points of bifurcation, we need the following expression in polar coordinates $(\rho, \phi)$ of the operator $\mathcal L_H$ appearing in \eqref{eq1.6}. 
\begin{equation}\label{Lpolar}
	\mathcal L_H=-\frac{h^2 }{\rho}\partial_\rho\left(\frac{ \rho}{h^2+\rho^2} \partial_\rho\right)-\frac{1}{\rho^2}\partial_\phi^2.
\end{equation} 
It can be seen that the operator $\mathcal L_H$ is invariant under translations in the angle  $\phi$. In other words, 	 $\mathcal L_H$  is invariant under rotations  around the origin in  Cartesian coordinates. In view of the transport nature of the first equation in  \eqref{eq1.6},   the rotational invariance of $\mathcal L_H$ implies that any function that is radially symmetric around the origin  constitutes a trivial solution of  \eqref{eq1.6}. This  assertion  will be checked later in Corollary \ref{trivialsolution}. It is important to note that, in contrast to the 2D Euler equations, functions that are radially symmetric around a point other than the origin may not yield trivial solutions for \eqref{eq1.6}, as the matrix $K_H$ defined in \eqref{KH} is not invariant under translations.

By carefully studying the linearization of contour dynamics equations for patches of  \eqref{eq1.6} near a given disk centered at the origin and applying the bifurcation theory,  we establish our first main result on the existence of time-periodic helical  $m$-waves of Kelvin.
\begin{theorem}\label{thm-simply}
	Let $a>0, h>0$ be two numbers and $m\geq 2$ be an integer. Then there exist three constants $M(a,h)$, $H_0(a)$ and $H_1(a)$ such that if either  
$m>M(a,h)$ or $h\in (0, H_0(a))\cup (H_1(a), +\infty)$ 
, then there exists a curve of simply connected helical $m$-waves of Kelvin $\omega_H$ for \eqref{eq1.6} bifurcating from the trivial solution $$\omega_{trivial}= {1}_{B_a(0)}$$ at the angular velocity
	 $$\Om_m=\Om_m(a,h):=\frac{a^2}{h^2}I'_{m}\left(\frac{ma}{h}\right)K'_{m}\left(\frac{ma}{h}\right)+\frac{h^2+a^2}{2h^2}.$$
	 Here $I_m$ and $K_m$ are the modified Bessel functions of the first and second kind, of order $m$, and the superscript `` prime '' denotes the derivative with respect to the argument. 
	 
	 Moreover, there exists $s_0>0$ such that for each $s\in (-s_0, s_0)$, there exist a simply connected $m$-fold symmetric domain $D_H^{m,s}$ and a real number $\Omega_H^{m,s}=\Om_m+o(1)$ such  that
	 \begin{equation*}
	 	\omega_H(x,t)=1_{D_H^{m,s}}(R_{-\Omega_H^{m,s}t}x)
	 \end{equation*}
 and 
 \begin{equation*}
 	D_H^{m,s}=\left\{\rho e^{i\phi}:\rho<a+s\cos(m\phi)+o(s)\right\}.
 \end{equation*}
 
\end{theorem}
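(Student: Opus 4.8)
The plan is to recast the existence of helical Kelvin waves as a bifurcation problem for a nonlinear contour-dynamics functional and to produce a local branch via the Crandall--Rabinowitz theorem. We work in $m$-fold symmetric, even H\"older classes $X=\{f\in C^{1,\alpha}:f(\phi)=\sum_{j\ge1}a_j\cos(jm\phi)\}$ and $Y=\{g\in C^{0,\alpha}:g(\phi)=\sum_{j\ge1}b_j\sin(jm\phi)\}$, and parametrize a candidate domain by $\partial D=\{(a+f(\phi))e^{i\phi}\}$ with $f\in X$ small. Writing $\varphi_D=\mathcal L_H^{-1}\mathbf 1_D=\int_D G_H(\cdot,y)\,dy$, where $G_H$ is the Green's function of $\mathcal L_H$ --- which is rotation-invariant by \eqref{Lpolar} --- the usual rotating-patch computation, adapted from the 2D Euler setting by using the rotational invariance of $\mathcal L_H$, shows that $\omega(x,t)=\mathbf 1_D(R_{-\Omega t}x)$ solves \eqref{eq1.6} distributionally if and only if the relative stream function $\varphi_D(y)-\tfrac{\Omega}{2}|y|^2$ is constant along $\partial D$. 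Taking the normal component of the associated relative velocity on $\partial D$ gives a map $F:\mathbb R\times\mathcal O\to Y$, $\mathcal O\subset X$ a neighbourhood of $0$, whose zeros are the helical Kelvin waves; the radial symmetry of $\mathbf 1_{B_a}$ together with that of $\mathcal L_H$ yields $F(\Omega,0)\equiv0$ for every $\Omega$, which is Corollary~\ref{trivialsolution}.

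Next I would linearize along the trivial branch. Two explicit ingredients enter. First, the radial profile $\Phi:=\mathcal L_H^{-1}\mathbf 1_{B_a}$: integrating \eqref{Lpolar} gives $\Phi'(\rho)=-\tfrac{\rho(h^2+\rho^2)}{2h^2}$ for $\rho<a$, so the inner normal velocity along $\partial B_a$ has size $|\Phi'(a)|=\tfrac{a(h^2+a^2)}{2h^2}$. Second, the Fourier decomposition $G_H(x,y)=\sum_{k\in\mathbb Z}g_k(|x|,|y|)e^{ik(\phi_x-\phi_y)}$: each coefficient solves the mode-$k$ radial part of $\mathcal L_H$ with a point source, and the substitution $v=\tfrac{\rho}{h^2+\rho^2}u'$ followed by $s=k\rho/h$ turns the homogeneous equation into the modified Bessel equation of order $k$, so its two solutions are $\rho\,I_k'(k\rho/h)$ (regular at $0$) and $\rho\,K_k'(k\rho/h)$ (decaying at $\infty$); hence $g_k(a,a)$ is a constant multiple of $\tfrac{a^2}{h^2}I_k'(ka/h)K_k'(ka/h)$. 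Substituting into the Gateaux derivative shows $\partial_fF(\Omega,0)$ is diagonal in $\{\cos(jm\phi)\}$ with
$$\partial_fF(\Omega,0)[\cos(k\phi)]=k\,\mu_k(\Omega)\sin(k\phi),\qquad \mu_k(\Omega)=\frac{a^2}{h^2}I_k'\!\Big(\frac{ka}{h}\Big)K_k'\!\Big(\frac{ka}{h}\Big)+\frac{h^2+a^2}{2h^2}-\Omega.$$
Since $\Omega$ enters $\mu_k$ affinely with slope $-1$, the equation $\mu_m(\Omega)=0$ has the unique, hence simple, root $\Omega=\Omega_m$ of the statement; as $h\to\infty$ one checks $\tfrac{a^2}{h^2}I_k'(ka/h)K_k'(ka/h)\to-\tfrac1{2k}$, so $\Omega_m\to\tfrac{m-1}{2m}$, recovering Kelvin's value.

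It then remains to check the Crandall--Rabinowitz hypotheses at $(\Omega_m,0)$. (i) \emph{Regularity}: $F$ is well-defined and $C^1$ (indeed smooth) near $(\Omega_m,0)$, by potential-theoretic estimates using that $G_H(x,y)=-\tfrac{b(x,y)}{2\pi}\log|x-y|+(\text{more regular})$ with $b$ smooth, symmetric and bounded below, so that $f\mapsto\varphi_D|_{\partial D}$ is a bounded, smooth map between the H\"older classes. (ii) \emph{One-dimensional kernel}: $\ker\partial_fF(\Omega_m,0)=\operatorname{span}\{\cos(m\phi)\}$, equivalently $\mu_{jm}(\Omega_m)\ne0$ for all $j\ge2$. (iii) \emph{Codimension-one range}: once $\partial_fF(\Omega_m,0)$ is Fredholm of index $0$ this follows from (ii); Fredholmness holds since $\tfrac{a^2}{h^2}I_k'(ka/h)K_k'(ka/h)\to0$ as $k\to\infty$, so $\mu_k(\Omega_m)\to-\tfrac{a^2}{h^2}I_m'(ma/h)K_m'(ma/h)>0$ (using $I_m'>0$, $K_m'<0$), i.e. the symbol is bounded away from $0$ at infinity. (iv) \emph{Transversality}: $\partial_\Omega\partial_fF(\Omega_m,0)[\cos(m\phi)]=-m\sin(m\phi)\notin\operatorname{Range}\partial_fF(\Omega_m,0)$. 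Items (i), (iii), (iv) are structural; the crux is (ii).

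The main obstacle is precisely the non-degeneracy (ii). Writing $\mu_{jm}(\Omega_m)=\tfrac{a^2}{h^2}\big[I_{jm}'(jma/h)K_{jm}'(jma/h)-I_m'(ma/h)K_m'(ma/h)\big]$, it suffices to show $k\mapsto I_k'(ka/h)K_k'(ka/h)$ is strictly monotone on $\{k\ge m\}$, so it cannot repeat the value attained at $k=m$. Strict monotonicity of such products of modified Bessel functions and their derivatives is delicate in general, but becomes accessible exactly in the three regimes of the statement: when $m>M(a,h)$ the arguments $ka/h$ ($k\ge m$) are uniformly large, where uniform (Debye-type) asymptotics of $I_k'(kx)K_k'(kx)$ apply with controlled remainders; when $h<H_0(a)$ the arguments $ka/h$ are again uniformly large for every $k\ge m\ge2$; and when $h>H_1(a)$ one is in a perturbative neighbourhood of the 2D Euler case, where $\tfrac{a^2}{h^2}I_k'(ka/h)K_k'(ka/h)=-\tfrac1{2k}+O(h^{-2})$ is manifestly strictly increasing and the error cannot destroy monotonicity. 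In each regime $\mu_{jm}(\Omega_m)\ne0$ for all $j\ge2$, so Crandall--Rabinowitz yields a local $C^1$ curve $s\mapsto(\Omega_H^{m,s},f^{m,s})$ through $(\Omega_m,0)$ with $f^{m,s}(\phi)=s\cos(m\phi)+o(s)$ and $\Omega_H^{m,s}=\Omega_m+o(1)$, which is the conclusion of Theorem~\ref{thm-simply}.
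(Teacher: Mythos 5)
Your proposal follows essentially the same route as the paper: contour dynamics, Crandall--Rabinowitz at $(\Omega_m,0)$, Fourier diagonalization of the linearized operator via the Bessel-function expansion of $G_H$ (yielding the same multiplier $\mu_k(\Omega)=\frac{a^2}{h^2}I_k'(ka/h)K_k'(ka/h)+\frac{h^2+a^2}{2h^2}-\Omega$), and reduction of the one-dimensional-kernel condition to strict monotonicity of $k\mapsto I_k'(ka/h)K_k'(ka/h)$ in exactly the three regimes of the statement. The only substantive difference is at the crux you correctly identify: where you invoke Debye-type uniform asymptotics ``with controlled remainders,'' the paper instead uses Segura's explicit two-sided bounds on $C(I_\nu)$, $C(K_\nu)$ and $I_\nu K_\nu$ to reduce the monotonicity to positivity of an explicit scalar function $f_z(\nu)$ (Proposition \ref{prop3-6}) --- a quantitative step your sketch would still need to carry out to be complete.
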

Some comments about our results in Theorem \ref{thm-simply} are listed as follows.
\begin{remark}
	Our result rigorously establishes the existence of helical Kelvin waves for the 3D Euler equations, thereby  verifying  the  conjecture in Lucas and Dritschel \cite{LucDri} and Chu and Llewellyn \cite{Chu}. Note that our formulae for $\Om_m$ are slightly different from the one in \cite{Chu, LucDri}, since we do not assume the presence of vortex sheets at the boundaries of patches.
\end{remark}
\begin{remark}
	 Formally speaking, the helical flow governed by \eqref{eq1.6} approaches to planar flow as the helical pitch $h$ tends to $+\infty$. This can be seen from the fact that the operator $\mathcal L_H$ will formally tend to $-\Delta$ in this limit, which implies that equations  \eqref{eq1.6} tend  to the 2D Euler equations.  
	 
	 On the other hand, by  employing  expansions \eqref{bzk} for  modified Bessel functions, one can check that the critical angular velocity in Theorem \ref{thm-simply} satisfies
	 $$\Om_m\to \frac{m-1}{2m}, \ \ \ \text{as}\ h\to+\infty.$$
	 It is noteworthy that the   $m$-waves of Kelvin  for the 2D Euler equations were obtained in e.g. \cite{hmidi2013boundary} by bifurcating from disks with  angular velocity $ \frac{m-1}{2m}$. Consequently,  our result in Theorem \ref{thm-simply}  can be viewed as a generalization  of $m$-waves of Kelvin from the 2D Euler equations   to the 3D helically symmetric Euler equations.
\end{remark}
\begin{remark}
	 The boundaries of helical Kelvin waves constructed in Theorem \ref{thm-simply}  exhibit $C^{1,\frac{1}{2}}$ regularity.   By adapting arguments in works such as  \cite{CCG, hmidi2013boundary, GH}, one may improve the regularity of these boundaries to be analytic.
\end{remark}
\begin{remark}
	  Numerical computations show that the assumptions that either  
	  $m>M(a,h)$ or $h\in (0, H_0(a))\cup (H_1(a), +\infty)$  in Theorem \ref{thm-simply} can be  removed, as discussed in Remark \ref{rem} below. However, a rigorous proof requires more detailed analysis.  
\end{remark}

The proof of Theorem \ref{thm-simply} is accomplished by applying bifurcation theory of Crandall and Rabinowitz  \cite{CRANDALL1971321} (see also Theorem \ref{CR} in the appendix of this paper) to the contour dynamics equations for patch-type solutions for \eqref{eq1.6}, which we write as  $$\mathcal{F}(\Omega,r)=0$$ for certain functional $\mathcal{F}$.

The strategy is to demonstrate that the functional $\mathcal{F}$ meets all the assumptions required by Crandall-Rabinowitz Theorem \ref{CR}, which primarily involves the regularity analysis of these nonlinear functionals and the study of spectral of their linearized operators around the trivial solutions $(\Om, 0)$. Unlike previous works such as \cite{burbea1982motions,castro2016existence,hassainia2015v,hmidi2013boundary,hmidi2024uniformly}, where the kernels involved in the functionals are treated differently, here they are represented by Fourier series (see section \ref{sec2}) with precise Fourier coefficients. This formulation facilitates the accurate computation of the angular velocity but complicates the proof of regularity for the functional $\mathcal{F}$. To address this challenge, we note that the singularity of the Green function is locally the same in both the whole space and bounded domains. Thus, the Green function in bounded domains can be effectively utilized to verify the regularity of the functional. The above approach is based on the results presented in \cite{cao2024expansion}, where an expansion for the Green function of general divergence-type elliptic operators in bounded domains is obtained. In this work, the degree of singularity is clearly shown to be approximately logarithmic. In the spectral analysis, the study will involve the product of derivatives of modified Bessel functions of the first and second kinds (i.e.,  $I'_{m}\left(\frac{ma}{h}\right)K'_{m}\left(\frac{ma}{h}\right)$). In order to derive the one-dimensional properties requested in Crandall-Rabinowitz bifurcation  theorem, the most essential and difficult step is to verify the monotonicity of the product $I'_{m}\left(\frac{ma}{h}\right)K'_{m}\left(\frac{ma}{h}\right)$ in $m$.  Our key idea is that by using some estimates in \cite{segura2011bounds,segura2021monotonicity} for modified Bessel functions, we are able to  reduce the checking of monotonic property into the problem  of verifying the fact that a scalar function with a specific expression is greater than zero; see Proposition \ref{prop3-6} for details. Using this reduction, through a series of refined estimates, we successfully  prove the monotonicity of the product of these derivatives under very weak conditions and provide decay estimates. These results are sufficient to verify Theorem \ref{CR}, thereby completing the proof.

\bigskip

Our second main result concerns the existence of doubly connected time-periodic helical patches for  \eqref{eq1.6}.
\begin{theorem}\label{thm-doubly}
	Let $a_1>a_2>0, h>0$ be three numbers and $m\geq 2$ be an integer. Then there exists a constant $M_h(a_1, a_2)>0$ such that if $m>M_h(a_1, a_2)$, there exist two curves of doubly connected   $m$-fold symmetric vortex patches $\hat{\omega}_H^{m,s, \pm}$ for \eqref{eq1.6} bifurcating from the trivial solution $$\hat{\omega}_{trivial}= {1}_{B_{a_1}(0)\setminus \overline{B_{a_2}(0)}}$$ at the angular velocity
	\begin{equation*}
		\begin{split}
			\Om_m^{\pm}=\frac{(h^2+a_1^2)(a_1^2-a_2^2)}{4h^2 a_1^2}+\frac{a_1^2}{2h^2}I'_{m}\left(\frac{ma_1}{h}\right)K'_{m}\left(\frac{ma_1}{h}\right)-\frac{a_2^2}{2h^2}I'_{m}\left(\frac{ma_2}{h}\right)K'_{m}\left(\frac{ma_2}{h}\right) \pm \frac{\sqrt{\triangle_m}}{2},
		\end{split}
	\end{equation*}
	where 	
	\begin{equation*}
		\begin{split}
		\triangle_m=& \left(\frac{(h^2+a^2)(a_1^2-a_2^2)}{2h^2 a_1^2}+\frac{a_1^2}{h^2}I'_{m}\left(\frac{ma_1}{h}\right)K'_{m}\left(\frac{ma_1}{h}\right)+ \frac{a_2^2}{h^2}I'_{m}\left(\frac{ma_2}{h}\right)K'_{m}\left(\frac{ma_2}{h}\right)   \right)^2\\
		&- 4\left(\frac{a_1a_2}{ h^2}I'_{m}\left(\frac{ma_2}{h}\right)K'_{m}\left(\frac{ma_1}{h}\right)\right)^2.	
		\end{split}
	\end{equation*}		
Moreover, there exists $s_1>0$ such that for each $s\in (-s_1, s_1)$, there exist two pairs of simply connected connected m-fold symmetric domain $\hat{D}_{H,i}^{m,s, \pm}$ for $i=1,2$ and two  real numbers $\hat{\Omega}_H^{m,s, \pm}=\Om_m^{\pm}+o(1)$ such  that
 \begin{equation*}
	\hat{D}_{H,2}^{m,s, \pm}\subset \hat{D}_{H,1}^{m,s, \pm},
\end{equation*}
	 \begin{equation*}
\hat{\omega}_H^{m,s, \pm}(x,t)=1_{\hat{D}_{H,1}^{m,s, \pm}\setminus \overline{\hat{D}_{H,2}^{m,s, \pm}}}(R_{-\hat{\Omega}_H^{m,s, \pm}t}x),
\end{equation*}
and the two domain can be written as
 \begin{equation*}
	\hat{D}_{H,1}^{m,s, \pm}=\left\{\rho e^{i\phi}:\rho<a_1+s\left(\Om_m^\pm+\frac{a_2^2}{h^2}I'_{m}\left(\frac{m  a_2}{h}\right)K'_{m}\left(\frac{m a_2 }{h}\right)\right)\cos(m\phi)+o(s)\right\},
\end{equation*}
\begin{equation*}
	\hat{D}_{H,2}^{m,s, \pm}=\left\{\rho e^{i\phi}:\rho<a_2-s\left(\frac{a_1a_2}{h^2}I'_{m}\left(\frac{m  a_2}{h}\right)K'_{m}\left(\frac{m a_1 }{h}\right)\right)\cos(m\phi)+o(s)\right\}.
\end{equation*}
\begin{remark}
	If we set $a_1=1$ and $a_2=b$ for some $0<b<1$, then as $h\to\infty$, we have
	\begin{equation*}
		\Om_m^{\pm}\to \frac{1-b^2}{4}\pm\frac{1}{2m}\sqrt{\left(\frac{(1-b^2)m}{2}-1\right)^2-b^{2m}}.
	\end{equation*}
 It is noteworthy that the  doubly connected $m$-waves of Kelvin  for the 2D Euler equations were obtained, for instance,  \cite{dHMV2016} by bifurcating from annulus with  angular velocity $\frac{1-b^2}{4}\pm\frac{1}{2m}\sqrt{\left(\frac{(1-b^2)m}{2}-1\right)^2-b^{2m}}$. Consequently,  our result in Theorem \ref{thm-doubly}  can be viewed as a generalization  of doubly connected $m$-waves of Kelvin from the 2D Euler equations   to the 3D helically symmetric Euler equations.
\end{remark}
\end{theorem}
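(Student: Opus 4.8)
The plan is to run the Crandall--Rabinowitz machinery of Theorem~\ref{CR} on a two-component contour dynamics functional, in parallel with the simply connected case (Theorem~\ref{thm-simply}), with the additional bookkeeping forced by the inner interface. First I would parametrize the two boundaries as near-circular curves, writing the outer boundary as $\rho = a_1 + r_1(\phi)$ and the inner boundary as $\rho = a_2 + r_2(\phi)$ with $r=(r_1,r_2)$ small and $m$-fold symmetric, and encode the condition that $\hat\omega_H = 1_{\hat D_1 \setminus \overline{\hat D_2}}$ rotates rigidly with angular velocity $\Omega$ as a pair of scalar equations $\mathcal F(\Omega, r) = (\mathcal F_1(\Omega,r), \mathcal F_2(\Omega,r)) = 0$, one for each interface. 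As in the simply connected analysis, each $\mathcal F_i$ is assembled from the Fourier-series representation of the Green function of $\mathcal L_H$ (section~\ref{sec2}), so the stream function generated by the annular patch splits into a contribution from the outer domain and one from the inner domain; the key structural point is that the cross terms (the velocity at $\rho = a_2$ induced by the outer boundary, and vice versa) are exactly what produce the off-diagonal entries in the linearized system and hence the $2\times 2$ matrix whose eigenvalues give $\Omega_m^\pm$. The regularity of $\mathcal F: \mathbb R \times X^2 \to Y^2$ on suitable Hölder spaces of $m$-fold symmetric functions follows verbatim from the simply connected argument, using the logarithmic-type expansion of the Green function from \cite{cao2024expansion} to control the (locally whole-space) singularity; the only new feature is that one must also keep the two curves disjoint, which is automatic for $r$ small since $a_1 > a_2$.

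Next I would compute the linearized operator $D_r\mathcal F(\Omega, 0)$. Because $\mathcal L_H$ is rotation invariant, the linearization diagonalizes in the Fourier basis $\{\cos(km\phi)\}_{k\ge1}$, and on the $k$-th mode it acts as multiplication by a $2\times 2$ matrix $M_k(\Omega)$ whose entries are explicit expressions in $a_1, a_2, h$ and the products $I'_{km}(\tfrac{km a_j}{h})K'_{km}(\tfrac{km a_j}{h})$ and the cross product $I'_{km}(\tfrac{km a_2}{h})K'_{km}(\tfrac{km a_1}{h})$ — these are precisely the quantities appearing in $\Omega_m^\pm$ and $\triangle_m$. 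The one-dimensional kernel required by Theorem~\ref{CR} is obtained by choosing $\Omega = \Omega_m^\pm$ so that $\det M_1(\Omega) = 0$ with $M_1$ having a simple zero eigenvalue (this forces $\triangle_m \ge 0$, which is why the statement restricts to $m > M_h(a_1,a_2)$), while requiring that $M_k(\Omega_m^\pm)$ stays invertible for all $k \ge 2$. The transversality condition $\partial_\Omega$ of the relevant eigenvalue being nonzero at $\Omega_m^\pm$ is a short computation from the explicit form of $M_1$. The kernel eigenvector of $M_1(\Omega_m^\pm)$ is exactly what dictates the stated first-order shapes of $\hat D_{H,1}^{m,s,\pm}$ and $\hat D_{H,2}^{m,s,\pm}$: the components $\bigl(\Omega_m^\pm + \tfrac{a_2^2}{h^2}I'_m K'_m, -\tfrac{a_1 a_2}{h^2}I'_m K'_m\bigr)$ are the coordinates of that null vector.

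The main obstacle, exactly as flagged in the introduction, is the spectral non-degeneracy for the higher modes: one must show that for $k \ge 2$ the matrices $M_k(\Omega_m^\pm)$ are invertible, i.e. that the two branches of the dispersion curve for mode $1$ do not accidentally coincide with an eigenvalue of mode $k$. This reduces to sign and monotonicity properties, in $k$ (equivalently in the order $km$), of the Bessel products $I'_{km}K'_{km}$ and of the cross term; here I would invoke Proposition~\ref{prop3-6} and the bounds of \cite{segura2011bounds, segura2021monotonicity} to get that $x \mapsto I'_\nu(x)K'_\nu(x)$ and the relevant combinations decay and are monotone in $\nu$, and then show the $2\times 2$ discriminant for mode $k$ cannot vanish at $\Omega = \Omega_m^\pm$ once $m$ is large enough — this is the source of the threshold $M_h(a_1,a_2)$. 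Once $D_r\mathcal F(\Omega_m^\pm,0)$ has a one-dimensional kernel, closed range (codimension one, so the Fredholm and range conditions of Theorem~\ref{CR} hold), and transversality is verified, Crandall--Rabinowitz yields the two local curves $s \mapsto (\hat\Omega_H^{m,s,\pm}, r(s))$ with $\hat\Omega_H^{m,s,\pm} = \Omega_m^\pm + o(1)$ and $r(s) = s\,v_\pm + o(s)$ for the null vector $v_\pm$; unwinding the parametrization gives the stated domains and completes the proof.
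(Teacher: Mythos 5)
Your proposal matches the paper's proof essentially step for step: a two-component contour dynamics functional, Hölder regularity transferred from the simply connected case, a linearization that acts on the $n$-th Fourier mode by an explicit $2\times 2$ matrix whose singular values of $\Omega$ give $\Om_m^{\pm}$, spectral separation of the higher modes for $m$ large, transversality, and then Theorem~\ref{CR}. The only substantive divergence is the tool for the higher-mode non-degeneracy: Proposition~\ref{prop3-6} (and the Segura bounds) concern $I'_\nu(\nu z)K'_\nu(\nu z)$ with equal arguments and do not directly control the cross term $I'_{n}(na_2/h)K'_{n}(na_1/h)$, so the paper instead uses the uniform (Debye-type) asymptotic expansions of $I'_n,K'_n$ to show this cross term decays exponentially in $n$ (since $a_1>a_2$) and that $\Om_n^{+}$ increases and $\Om_n^{-}$ decreases for $n>M_h(a_1,a_2)$, which is what actually yields the invertibility of $M_{nm}(\Om_m^{\pm},a_1,a_2)$ for $n\ge 2$ in Proposition~\ref{prop4-3}.
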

\subsection{Notation and organization}
In this subsection, we first introduce some notations. We are going to use the standard notations for Lebesgue, Sobolev and Hölder spaces: $L^p(D)$, $W^{k,p}(D)$, $C^\alpha(D)$..., where $p\in[1,\infty)$, $k\in\mathbb{N}^*$, $\alpha\in(0,1)$ and $D$ is a domain in $\mathbb{R}^2$. We will use ``$\lesssim$'' to denote the statement ``$\leq C$'' and ``$\lesssim_\delta$'' to denote the statement ``$\leq C_\delta$''.

We end this section by outlining organization of this paper.
This paper is structured into sections based on the Green function expansion of the operator $\mathcal{L}_H$ and the cases of simply connected and doubly connected patches separately. Section \ref{sec2} focuses on presenting the Green’s function of the operator $\mathcal{L}_H$ and studying its properties. Section \ref{sec3} is dedicated to constructing simply connected Kelvin waves, while Section \ref{sec4} is devoted to constructing doubly connected Kelvin waves. In Appendix \ref{apa}, we provide the statement of the Crandall-Rabinowitz bifurcation theorem, while in Appendix \ref{apb}, we present some Hölder estimates for certain integral operators.
\section{Preliminary: the Green function for \texorpdfstring{$\mathcal L_H$}{} and trivial solutions} \label{sec2}
	In this section, we will collect and prove some basic properties for the Green function associated with the elliptic operator  $\mathcal L_H=-\divv (K_H\nabla (\cdot)) $ with $K_H$ defined in \eqref{KH}. 
 
	It is useful to derive an expression of the operator $\mathcal{L}_H$ in terms of polar coordinates. Denoting $(x_1,x_2)=\rho e^{i\phi}$ and $\kappa=1/h$. Direct computation gives the following expression (see \cite{DDMW2, GM2024})
	\begin{equation}\label{eq2.2}
		\mathcal{L}_H\Psi=\mathcal{L}_{\rho,\phi}\Psi:=-\frac{1}{\rho}\partial_\rho\left(\frac{\rho}{1+\kappa^2\rho^2}\partial_\rho\Psi\right)-\frac{1}{\rho^2}\partial_\phi^2\Psi\quad {\rm in}\ \Pi,
	\end{equation}
	where $\Pi=\{(\rho,\phi):\rho>0,\phi\in\mathbb{T}\}$. 
	In order to invert $\mathcal{L}_H\Psi=w$ by
	\begin{equation}\label{integral}
		\Psi=\mathcal G_Kw:=\int_{\mathbb R^2} G_H(x,y) w(y) dy=\int\int_{\Pi} G_H(\rho,\phi;\rho_0,\phi_0)w(\rho_0,\phi_0)\rho_0\,d\rho_0\,d\phi_0,
	\end{equation}
	the Green function $G_H(r,\phi;r_0,\phi_0)$ should satisfy
	\begin{equation}\label{eq2.4}
		\mathcal{L}_{r,\phi}G_H=\rho_0^{-1}\delta_{(\rho_0,\phi_0)}.
	\end{equation}
	Using a Fourier expansion in the variable $\phi$ leads to a decomposition of $G_H$ in the form
	\begin{equation}\label{Fourier}
		G_H(\rho,\phi;\rho_0,\phi_0)=-\sum_{m\in\mathbb{Z}}G_m(\rho,\rho_0)e^{im(\phi-\phi_0)}.
	\end{equation}
	Then \eqref{eq2.2} and \eqref{eq2.4} lead to the ordinary differential equations
	\begin{equation*}
		\frac{\rho}{1+\kappa^2\rho^2}G_m^{''}+\frac{1-\kappa^2\rho^2}{(1+\kappa^2\rho^2)^2}G_m^{'}-\frac{m^2}{\rho}G_m=\frac{\delta(\rho-\rho_0)}{2\pi},\quad \text{for}\ m\in\mathbb{Z}.
	\end{equation*}
	It can be shown by solving the above ODEs that, for $m\neq 0$, we have
	\begin{align}\label{2.5}
		G_m(\rho,\rho_0)=\frac{\rho\rho_0}{2\pi h^2}
		\begin{cases}
			I_m'\left(\frac{m\rho}{h}\right)K_m'\left(\frac{m\rho_0}{h}\right),&\text{if} ~\rho<\rho_0,\vspace{0.25em}\\
			K_m'\left(\frac{m\rho}{h}\right)I_m'\left(\frac{m\rho_0}{h}\right),&\text{if} ~\rho>\rho_0,
		\end{cases}
	\end{align}
	and for $m=0$, we have
	\begin{align}\label{2.6}
		G_0(\rho,\rho_0)=\begin{cases}
			0,&\text{if} ~\rho<\rho_0,\\
			\frac{1}{2\pi}\int_{\rho_0}^{\rho}\frac{1+\kappa^2s^2}{s}ds,&\text{if} ~\rho>\rho_0.
		\end{cases}
	\end{align}
	Here $I_m$ and $K_m$ are the modified Bessel functions of the first and second kind, of order $m$, and the superscript prime denotes a derivative with respect to the argument.
	
	Before verifying \eqref{integral},  we present some useful estimates for the modified Bessel functions $I_\nu$ and $K_\nu$ developed in \cite{segura2011bounds,segura2021monotonicity}, which will be utilized in our subsequent proofs.
	\begin{lemma}\label{lemma3.4}
		For $z>0$, denote 
		\begin{equation}
			C(I_\nu(z))=z\frac{I'_\nu(z)}{I_\nu(z)},\quad C(K_\nu(z))=-z\frac{K'_\nu(z)}{K_\nu(z)},
		\end{equation}
		which are positive quantities if $\nu\geq 0$ ( $C(K_\nu(z))$ is positive for all $\nu$). There holds
		\begin{equation}
			\sqrt{(\nu+1)^2+z^2}-1<C(I_\nu(z))<\sqrt{(\nu+\frac{1}{2})^2+z^2}-\frac{1}{2},
		\end{equation}
		where the lower bound holds for $\nu\geq -1$ and the upper bound holds for $\nu\geq -\frac{1}{2}$. 
		
		Moreover, we have 
		\begin{equation}
			\sqrt{(\nu-\frac{1}{2})^2+z^2}+\frac{1}{2}<C(K_\nu(z))<\sqrt{(\nu-1)^2+z^2}+1,\quad {\rm for}\  \nu\geq 1.
		\end{equation}
	\end{lemma}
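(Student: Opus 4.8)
The inequalities are those of Segura \cite{segura2011bounds,segura2021monotonicity}; the plan is to reprove them from the Riccati equations for the logarithmic derivatives. Set $u(z)=C(I_\nu(z))=zI_\nu'(z)/I_\nu(z)$ and $v(z)=C(K_\nu(z))=-zK_\nu'(z)/K_\nu(z)$. Both $I_\nu$ and $K_\nu$ solve $z^2y''+zy'-(z^2+\nu^2)y=0$, so differentiating $\log y$ and substituting gives
\begin{equation*}
	zu'=z^2+\nu^2-u^2,\qquad zv'=v^2-z^2-\nu^2,\qquad z>0.
\end{equation*}
The small- and large-argument expansions of the Bessel functions furnish the boundary data $u(0^+)=v(0^+)=\nu$, $u(z)=z+O(1)$ and $v(z)=z+\tfrac12+O(1/z)$ as $z\to+\infty$, while the positivity claims follow from $I_\nu'>0$ (for $\nu\ge0$) and $K_\nu'<0$, both immediate from the recurrences.

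The key step is to sandwich $u$ and $v$ between test functions $\Phi_{c,a}(z):=\sqrt{c^2+z^2}+a$ with $c\ge0$, $a\in\mathbb{R}$. With $t:=\sqrt{c^2+z^2}\ge c$, an elementary computation gives the ``defects''
\begin{align*}
	z^2+\nu^2-\Phi_{c,a}^2-z\Phi_{c,a}'&=(\nu^2-c^2-a^2)-(2a+1)t+\tfrac{c^2}{t},\\
	\Phi_{c,a}^2-z^2-\nu^2-z\Phi_{c,a}'&=(c^2+a^2-\nu^2)+(2a-1)t+\tfrac{c^2}{t},
\end{align*}
and for the four parameter choices that reproduce the functions in the statement these collapse to $c\big(\tfrac{c}{t}-1\big)$ or $\tfrac{(t-c)^2}{t}$. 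For the $u$-equation, $(c,a)=(\nu+\tfrac12,-\tfrac12)$ makes the first defect $c\big(\tfrac{c}{t}-1\big)\le0$, so $\sqrt{(\nu+\tfrac12)^2+z^2}-\tfrac12$ is a supersolution (requires $\nu\ge-\tfrac12$), while $(c,a)=(\nu+1,-1)$ makes it $\tfrac{(t-c)^2}{t}\ge0$, so $\sqrt{(\nu+1)^2+z^2}-1$ is a subsolution (requires $\nu\ge-1$). For the $v$-equation, $(c,a)=(\nu-1,1)$ makes the second defect $\tfrac{(t-c)^2}{t}\ge0$, so $\sqrt{(\nu-1)^2+z^2}+1$ is a subsolution (requires $\nu\ge1$), and $(c,a)=(\nu-\tfrac12,\tfrac12)$ makes it $c\big(\tfrac{c}{t}-1\big)\le0$, so $\sqrt{(\nu-\tfrac12)^2+z^2}+\tfrac12$ is a supersolution (requires $\nu\ge\tfrac12$). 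These are exactly the four bounds claimed, and the constraints combine to the hypotheses of the lemma ($\nu\ge1$ in the $K_\nu$ case, $\nu\ge-1$ and $\nu\ge-\tfrac12$ in the $I_\nu$ case).

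It remains to promote sub-/supersolutions to one-sided bounds via the elementary scalar comparison principle: if $d$ is the difference of a solution and a sub- or supersolution, the integrating-factor identity shows $d\exp(-\int\partial_w f)$ is monotone, so the sign of $d$ fixed at one endpoint persists in the appropriate direction. For the $I_\nu$ bounds I would compare forward from $z=0^+$: $u$, $\sqrt{(\nu+\tfrac12)^2+z^2}-\tfrac12$ and $\sqrt{(\nu+1)^2+z^2}-1$ all equal $\nu$ there, and matching Taylor expansions shows the first minus $u$ is $O(z^2)$ and $u$ minus the second is $O(z^4)$, each with positive leading coefficient (the lower bound is tangent to second order at $0$, the single place where a two-term expansion is genuinely needed); hence the supersolution stays above $u$ and the subsolution below $u$ for all $z>0$. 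For the $K_\nu$ bounds the relevant solution $v=C(K_\nu)$ is pinned down by $v(z)=z+\tfrac12+O(1/z)$ at infinity, so I would compare backward from $z=+\infty$, where $\sqrt{(\nu-1)^2+z^2}+1=z+1+O(1/z)$ lies above $v$ and $v-\big(\sqrt{(\nu-\tfrac12)^2+z^2}+\tfrac12\big)=\tfrac{2\nu-1}{4z}+O(1/z^2)>0$ puts $\sqrt{(\nu-\tfrac12)^2+z^2}+\tfrac12$ below $v$; the exponential decay of the integrating factor annihilates the endpoint term as $z\to+\infty$ and propagates both inequalities to all $z>0$. Strictness is automatic since each defect is strictly sign-definite for $z>0$.

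The two points requiring care are the choice of comparison functions $\sqrt{c^2+z^2}\pm a$ — which the defect identities above render mechanical once one knows the targets — and the correct orientation of each comparison, forward from $0$ for $I_\nu$ and backward from $\infty$ for $K_\nu$, together with the one-sided boundary check; for the $I_\nu$ lower bound the latter forces a fourth-order Taylor match at the origin. Alternatively, the lemma may simply be quoted directly from \cite{segura2011bounds,segura2021monotonicity}.
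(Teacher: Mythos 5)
Your proposal is correct, but it is doing strictly more than the paper does: the paper offers no proof of Lemma \ref{lemma3.4} at all, simply quoting the bounds as ``estimates\ldots developed in \cite{segura2011bounds,segura2021monotonicity}''. What you have written is a self-contained reconstruction of Segura's own method (Riccati equations for the logarithmic derivatives plus sub-/supersolution comparison), and the computations check out: the Riccati forms $zu'=z^2+\nu^2-u^2$, $zv'=v^2-z^2-\nu^2$ are right, and with $t=\sqrt{c^2+z^2}$ your two defect identities specialize exactly as claimed — $(c,a)=(\nu+\tfrac12,-\tfrac12)$ and $(\nu-\tfrac12,\tfrac12)$ give $c(c/t-1)\le 0$, while $(\nu+1,-1)$ and $(\nu-1,1)$ give $(t-c)^2/t\ge 0$ — which is where the constraints $\nu\ge-\tfrac12$, $\nu\ge-1$, $\nu\ge\tfrac12$, $\nu\ge1$ come from. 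You have also correctly identified the two genuinely delicate points: the $I_\nu$ lower bound is tangent to $u$ to second order at $z=0$ (one can check $u-\Phi=\tfrac{z^4}{8(\nu+1)^3(\nu+2)}+O(z^6)$, so the fourth-order match you call for does have a positive coefficient), and the $K_\nu$ comparison must be run backward from $z=+\infty$, where the expansion $v=z+\tfrac12+\tfrac{4\nu^2-1}{8z}+O(z^{-2})$ gives exactly your $\tfrac{2\nu-1}{4z}$ margin (note this vanishes at $\nu=\tfrac12$, where $K_{1/2}$ makes the lower bound an identity — consistent with the lemma restricting to $\nu\ge1$). One small caveat if you flesh this out: in the forward comparison from $z=0^+$ the integrating factor behaves like $z^{2\nu}$, so for negative $\nu$ you must pair it with the $O(z^2)$ (resp.\ $O(z^4)$) vanishing of the difference to conclude the product tends to $0$; this works throughout the stated ranges. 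For the purposes of this paper, your closing remark is the operative one — the lemma can simply be cited — but the argument you sketch is a sound proof.
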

	\begin{lemma}\label{lemma3.5}
		Let $z>0$, the following two bounds hold:
		\begin{equation}
			\begin{split}
				I_\nu(z)K_\nu(z)<\frac{1}{2\sqrt{(\nu-\frac{1}{2})^2+z^2}},\quad&{\rm for}\ \nu\geq \frac{1}{2} ,\\
				I_\nu(z)K_\nu(z)>\frac{1}{1+\sqrt{\nu^2+z^2}+\sqrt{(\nu-1)^2+z^2}},\quad&{\rm for}\ \nu\geq -1.
			\end{split}
		\end{equation}
	\end{lemma}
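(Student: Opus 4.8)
The plan is to reduce both estimates to a single inequality for the sum $S(z):=C(I_\nu(z))+C(K_\nu(z))$, which is possible thanks to the Wronskian relation $I_\nu(z)K'_\nu(z)-I'_\nu(z)K_\nu(z)=-\tfrac1z$: dividing it by $I_\nu(z)K_\nu(z)>0$ and using the notation of Lemma~\ref{lemma3.4}, one gets the exact identity
\begin{equation*}
    I_\nu(z)K_\nu(z)=\frac{1}{C(I_\nu(z))+C(K_\nu(z))}.
\end{equation*}
Hence the claimed upper bound on the product is equivalent to the lower bound $S(z)>2\sqrt{(\nu-\tfrac12)^2+z^2}$, and the claimed lower bound on the product is equivalent to the upper bound $S(z)<1+\sqrt{\nu^2+z^2}+\sqrt{(\nu-1)^2+z^2}$.

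For the lower bound on the product I would simply add the two \emph{upper} estimates of Lemma~\ref{lemma3.4}, namely $C(I_\nu(z))<\sqrt{(\nu+\tfrac12)^2+z^2}-\tfrac12$ and $C(K_\nu(z))<\sqrt{(\nu-1)^2+z^2}+1$, so that $S(z)<\sqrt{(\nu+\tfrac12)^2+z^2}+\sqrt{(\nu-1)^2+z^2}+\tfrac12$; it then suffices to verify the elementary inequality $\sqrt{(\nu+\tfrac12)^2+z^2}\le\tfrac12+\sqrt{\nu^2+z^2}$, which after squaring reduces to $\nu\le\sqrt{\nu^2+z^2}$. This argument covers $\nu\ge1$; for the remaining range $-1\le\nu<1$ one falls back on $K_{-\nu}=K_\nu$ together with direct estimates for $C(I_\nu)$ and $C(K_\nu)$ and the behaviour of $I_\nu(z)K_\nu(z)$ near $z=0$ and as $z\to\infty$.

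The upper bound on the product is the genuinely delicate part and the main obstacle: simply adding the \emph{lower} estimates of Lemma~\ref{lemma3.4} is not enough, since it yields only $S(z)>\sqrt{(\nu+1)^2+z^2}+\sqrt{(\nu-\tfrac12)^2+z^2}-\tfrac12$, and the inequality $\sqrt{(\nu+1)^2+z^2}-\tfrac12\ge\sqrt{(\nu-\tfrac12)^2+z^2}$ fails for large $z$. Instead I would work with the Riccati equations for $\alpha(z):=C(I_\nu(z))$ and $\beta(z):=C(K_\nu(z))$, namely $z\alpha'=z^2+\nu^2-\alpha^2$ and $z\beta'=\beta^2-z^2-\nu^2$, whence $S=\alpha+\beta$ and $D:=\beta-\alpha>0$ satisfy $zS'=DS$ and $zD'=\tfrac12(S^2+D^2)-2(z^2+\nu^2)$. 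It then remains to show that $\Psi(z):=S(z)^2-4z^2-(2\nu-1)^2$ stays positive on $(0,\infty)$: one checks that $\Psi(0^+)=4\nu-1>0$ (using $\alpha,\beta\to\nu$) and $\Psi(z)\to2(2\nu-1)\ge0$ as $z\to\infty$ (using the expansion $S=2z+\tfrac{4\nu^2-1}{4z}+o(1/z)$), and one rules out an interior zero by a sign argument on this Riccati system: at a hypothetical first point $z_0$ with $\Psi(z_0)=0$ one has $S(z_0)^2=4z_0^2+(2\nu-1)^2$ and $\Psi'(z_0)=\tfrac{2D(z_0)S(z_0)^2}{z_0}-8z_0\le0$, which, combined with the equation for $D'$, is pushed to a contradiction. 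This global monotonicity/sign analysis, carried out in \cite{segura2011bounds,segura2021monotonicity}, is the technical heart of the argument; everything else is bookkeeping.
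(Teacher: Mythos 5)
First, a point of comparison: the paper does not prove Lemma \ref{lemma3.5} at all; like Lemma \ref{lemma3.4}, it is quoted directly from \cite{segura2011bounds,segura2021monotonicity}, so any complete argument you give would already be more than the paper supplies. Your reduction is the right one: the Wronskian $I_\nu(z)K'_\nu(z)-I'_\nu(z)K_\nu(z)=-1/z$ does give the exact identity $I_\nu(z)K_\nu(z)=\big(C(I_\nu(z))+C(K_\nu(z))\big)^{-1}$, and your derivation of the second inequality for $\nu\ge 1$, by summing the two upper bounds of Lemma \ref{lemma3.4} and checking $\sqrt{(\nu+\frac12)^2+z^2}\le \frac12+\sqrt{\nu^2+z^2}$, is correct and complete. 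Since the paper only ever applies the lemma with order at least $1$, the unproved range $-1\le\nu<1$ is harmless for the application, though it falls short of the statement as written and your fallback for that range is only a gesture.

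The genuine gap is in the first inequality, which is where the content lies. Your Riccati setup is correct: $z\alpha'=z^2+\nu^2-\alpha^2$, $z\beta'=\beta^2-z^2-\nu^2$, hence $zS'=DS$ and $zD'=\frac12(S^2+D^2)-2(z^2+\nu^2)$, and the limits $\Psi(0^+)=4\nu-1$ and $\Psi(+\infty)=2(2\nu-1)$ are right. But the decisive step, ruling out an interior zero of $\Psi$, is only asserted, and a single-point sign analysis at a first zero $z_0$ does not close the argument: from $\Psi(z_0)=0$ and $\Psi'(z_0)\le 0$ one gets $D(z_0)\le z_0^2/\bigl(z_0^2+(\nu-\tfrac12)^2\bigr)<1$ and then $z_0D'(z_0)=\tfrac12(1-4\nu)+\tfrac12 D(z_0)^2<1-2\nu\le 0$, which only says $D$ is decreasing there and is not a contradiction. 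Nor does the scheme ``$\Psi<0$ for all $z>z_0$ contradicts $\Psi(+\infty)\ge 0$'' work at the endpoint $\nu=\tfrac12$, where $\Psi(+\infty)=0$ (that borderline case needs the explicit formula $I_{1/2}(z)K_{1/2}(z)=(1-e^{-2z})/(2z)$). A genuinely global comparison argument is required, and that is precisely what \cite{segura2011bounds,segura2021monotonicity} supply; as written, your proposal defers the heart of the first bound back to those references rather than proving it.
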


	It is worth noting that the Green function expansion in \eqref{Fourier} has been widely used in physics literature (see for example \cite{Chu,LucDri}). However, to the best of the author’s knowledge, no rigorous proof exists for this Green’s function expansion from a mathematical standpoint. The following we provide a rigorous proof of this expansion for completeness.
	\begin{proposition}
		The coefficient $\{G_m\}_{m\in\Z}$ defined in \eqref{2.5}-\eqref{2.6} satisfies that, for any $\varphi\in C_c^\infty(0,+\infty)$ and $\rho_0\in(0,\infty)$, it holds
		\begin{equation}\label{G_m equation}
			\int_{0}^\infty  \frac{\rho}{1+\kappa^2 \rho^2}\partial_\rho G_m(\rho,\rho_0)\partial_\rho \varphi(\rho)+\frac{m^2}{\rho}G_m(\rho,\rho_0)\varphi(r)\,dr=-\frac{\varphi(\rho_0)}{2\pi}.
		\end{equation}  
		Furthermore, for any $f\in L^\infty((0,\infty)\times (0,2\pi))$ with compact support, the function
		\begin{equation}
			\mathcal{G}_Hf(\rho,\phi)=\iint_{\Pi}G_H(\rho,\phi;\rho_0,\phi_0)f(\rho_0,\phi_0)\rho_0\,d\rho_0\,d\phi_0
		\end{equation}   
		is well-defined, and satisfies $\mathcal{L}_{\rho, \phi}(\mathcal{G}_H f)=f$ in the following weak sense:
		\begin{equation}
			\iint_{\Pi}\left\{\frac{\rho}{1+\kappa^2 \rho^2}\partial_\rho (\mathcal{G}_K f)\partial_\rho \psi(\rho,\phi)-\frac{1}{\rho}\mathcal{G}_K f(\rho,\phi)\partial_\phi^2\psi(\rho,\phi)\right\}d\rho d\phi=\iint_{\Pi}f(\rho,\phi)\psi(\rho,\phi)\rho d\rho d\phi,
		\end{equation} 
		for any $\psi\in C_c^\infty(\Pi).$
	\end{proposition}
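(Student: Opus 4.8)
The plan is to establish the two assertions in turn: first the weak ODE identity \eqref{G_m equation} for each Fourier mode $G_m$, and then the weak solvability statement for $\mathcal G_H f$ obtained by summing the modes against $f$. For the first part, I would proceed mode by mode. For $m\neq 0$ the function $\rho\mapsto G_m(\rho,\rho_0)$ given by \eqref{2.5} is built from the two homogeneous solutions $\rho I_m'(m\rho/h)$ and $\rho K_m'(m\rho/h)$ of the associated second-order ODE, glued at $\rho=\rho_0$; for $m=0$ the formula \eqref{2.6} is a direct antiderivative. On each interval $(0,\rho_0)$ and $(\rho_0,\infty)$ the function is smooth and solves the homogeneous equation $\tfrac{\rho}{1+\kappa^2\rho^2}G_m''+\tfrac{1-\kappa^2\rho^2}{(1+\kappa^2\rho^2)^2}G_m'-\tfrac{m^2}{\rho}G_m=0$, so integrating by parts on each side against $\varphi\in C_c^\infty(0,\infty)$ leaves only the boundary term at $\rho=\rho_0$, namely $-\tfrac{\rho_0}{1+\kappa^2\rho_0^2}\,[\,\partial_\rho G_m\,]_{\rho_0^-}^{\rho_0^+}\,\varphi(\rho_0)$. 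The key computation is then to check the jump relation: using the Wronskian identity $I_m'(z)K_m''(z)-K_m'(z)I_m''(z)$ together with the Bessel equation $z^2 w''+zw'-(z^2+m^2)w=0$ to express $w''$, one finds that $[\partial_\rho G_m]$ across $\rho_0$ equals exactly $\tfrac{1+\kappa^2\rho_0^2}{2\pi\rho_0}$, which yields the right-hand side $-\varphi(\rho_0)/(2\pi)$. (For $m=0$ the jump of $G_0'$ is $\tfrac{1+\kappa^2\rho_0^2}{2\pi\rho_0}$ directly from \eqref{2.6}, and there is no $m^2/\rho$ term.) One should also record the decay/growth of $G_m$ and its $\rho$-derivative near $\rho=0$ and $\rho=\infty$ — $I_m'$ decays like a power near $0$ and $K_m'$ decays exponentially at $\infty$ (and symmetrically) — to justify that no boundary contributions arise at the endpoints; since $\varphi$ has compact support in $(0,\infty)$ this is automatic, but the estimates are needed for the second part.

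For the second part, I would first show $\mathcal G_H f$ is well defined for $f\in L^\infty$ with compact support. Writing $\mathcal G_H f(\rho,\phi)=-\sum_{m}e^{im\phi}\iint G_m(\rho,\rho_0)\hat f_m(\rho_0)\rho_0\,d\rho_0$ with $\hat f_m$ the $\phi$-Fourier coefficients of $f$, the bound from Lemma~\ref{lemma3.5} (equivalently, sharp bounds on $I_m',K_m'$ via Lemma~\ref{lemma3.4}) gives $|G_m(\rho,\rho_0)|\lesssim \rho\rho_0\,\min\{I_m'(m\rho_</h),K_m'(m\rho_>/h)\}\cdot(\dots)$ decaying in $m$; combined with the rapid decay of $\hat f_m$ if $f$ were smooth, or more carefully with $\|\hat f_m\|_{L^2}\in \ell^2$ and the pointwise decay of $G_m$ in $m$ on compact $\rho$-sets, the series converges locally uniformly in $\Pi$. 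Then to verify the weak equation, I would take $\psi\in C_c^\infty(\Pi)$, expand $\psi$ in its $\phi$-Fourier series $\psi=\sum_m \psi_m(\rho)e^{im\phi}$ (a finite-mode approximation plus tail control, or directly using Parseval in $\phi$), and reduce the two-dimensional weak identity to a sum over $m$ of one-dimensional identities of exactly the form \eqref{G_m equation}, with $\varphi(\rho)$ there replaced by $\overline{\psi_m(\rho)}$ (up to harmless conjugation since everything is real after summation) and the source $\hat f_m(\rho_0)$ integrated against it. The $\partial_\phi^2$ term produces precisely the $m^2/\rho$ factor, matching the left-hand side of \eqref{G_m equation}. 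Summing over $m$ and using $\sum_m \hat f_m(\rho)e^{im\phi}\psi_m(\rho)\cdots$ collapses back to $\iint f\psi\,\rho\,d\rho\,d\phi$ by Parseval, giving the claim.

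The main obstacle I anticipate is not the formal computation — the Wronskian/jump identity is a short exercise once the Bessel equation is invoked — but the \emph{justification of interchanging the infinite $\phi$-sum with the $\rho$-integration and the weak-derivative pairing}, i.e. the convergence bookkeeping for the second part. This requires uniform-in-$m$ control of $G_m(\rho,\rho_0)$ and $\partial_\rho G_m(\rho,\rho_0)$ on the compact $\rho$-support of $\psi$ and on the compact $\rho_0$-support of $f$, which is where Lemmas~\ref{lemma3.4}–\ref{lemma3.5} enter: they give $G_m$ decaying (at least) polynomially in $m$ on such sets, while $\hat f_m\in\ell^2$ and $\psi_m$ decays faster than any polynomial, so termwise integration and summation are licensed by dominated convergence. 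A secondary technical point is the behavior as $\rho\to 0^+$: although $\psi,f$ are supported away from $\rho=0$, one should note $G_m(\rho,\rho_0)\to 0$ as $\rho\to 0$ (from the $I_m'$ factor) so that $\mathcal G_H f$ extends continuously and no spurious boundary term at the origin appears when integrating by parts in $\rho$; similarly the exponential decay of $K_m'$ handles $\rho\to\infty$. With these estimates in hand the two displayed weak identities follow, completing the proof.
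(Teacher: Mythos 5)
Your proposal is correct and follows essentially the same route as the paper: the mode-by-mode identity \eqref{G_m equation} is obtained from the modified Bessel ODE plus the Wronskian $I_m(z)K_m'(z)-K_m(z)I_m'(z)=-1/z$ (which gives exactly the jump $[\partial_\rho G_m]_{\rho_0^-}^{\rho_0^+}=\tfrac{1+\kappa^2\rho_0^2}{2\pi\rho_0}$ you compute), and the weak 2D identity is then reduced to these one-dimensional identities by expanding the kernel and the test function in $\phi$-Fourier modes, with the interchange of sum and integral justified by the bounds of Lemmas \ref{lemma3.4}--\ref{lemma3.5} and Fubini, exactly as in the paper.
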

	\begin{proof}
		Due to Lemma \ref{lemma3.4}, we can rewrite the coefficient $G_m(r,r_0)$ as follows
		\begin{equation}
			G_m(\rho,\rho_0)=-\frac{1}{2\pi m^2}
			\begin{cases}
				C\left(I_m\left(\frac{m\rho}{h}\right)\right)C\left(K_m\left(\frac{m\rho_0}{h}\right)\right)I_m\left(\frac{m\rho}{h}\right)K_m\left(\frac{m\rho_0}{h}\right),&{\rm if}\ \rho<\rho_0\\
				C\left(I_m\left(\frac{m\rho_0}{h}\right)\right)C\left(K_m\left(\frac{m\rho}{h}\right)\right)I_m\left(\frac{m\rho_0}{h}\right)K_m\left(\frac{m\rho}{h}\right),&{\rm if}\ \rho>\rho_0  
			\end{cases}
		\end{equation}
		Then thanks to the following properties of modified Bessel function $I_m$ and $K_m$ (see \cite{bessel-2}):
		\begin{equation}
			\begin{split}
				&I_m''(z)+\frac{1}{z}I'_m(z)=\left(1+\frac{m^2}{z^2}\right)I_m(z),\\
				&K_m''(z)+\frac{1}{z}K'_m(z)=\left(1+\frac{m^2}{z^2}\right)K_m(z),\\
				&I_m(z)K_m'(z)-K_m(z)I_m'(z)=-\frac{1}{z},
			\end{split}
		\end{equation}
		they imply that \eqref{G_m equation} holds and we have
		\begin{align}
			\partial_\rho G_m(\rho,\rho_0)=&\frac{m \rho_0}{2\pi h}\frac{1+\kappa^2\rho^2}{\rho}
			\begin{cases}
				K_m'\left(\frac{m \rho_0}{h}\right)I_m\left(\frac{m \rho}{h}\right),&{\rm if}\ \rho<\rho_0,\\
				I_m'\left(\frac{m \rho_0}{h}\right)K_m\left(\frac{m \rho}{h}\right),&{\rm if}\ \rho>\rho_0,
			\end{cases}\\
			=&\frac{1}{2\pi m}\frac{1+\kappa^2\rho^2}{\rho}
			\begin{cases}
				-C\left(K_m\left(\frac{m\rho_0}{h}\right)\right)K_m\left(\frac{m\rho_0}{h}\right)I_m\left(\frac{m\rho}{h}\right),&{\rm if}\ \rho<\rho_0,\\
				C\left(I_m\left(\frac{m\rho_0}{h}\right)\right)I_m\left(\frac{m\rho_0}{h}\right)K_m\left(\frac{m\rho}{h}\right),&{\rm if}\ \rho>\rho_0.\\
			\end{cases}
		\end{align}    
		For $m\geq0$, we have the estimates of the upper bounds for $C(I_m(z))$ and $C(K_m(z))$, by using Lemma \ref{lemma3.4},
		\begin{equation}
			\begin{split}
				C(I_m(z))\leq \sqrt{4m^2+z^2},\\
				C(K_m(z))\leq 2\sqrt{m^2+z^2},
			\end{split}
		\end{equation}
		and the expressions of $I_m(z)$ and $K_m(z)$ (see \cite{bessel-2})
		\begin{align}\label{bzk}
			I_m(z)=&\frac{1}{m!}\left(\frac{z}{2}\right)^m\sum_{k=0}^\infty \frac{\left(\frac{z^2}{4}\right)^km!}{k!(m+k)!}\leq \left(\frac{z}{2}\right)^m\frac{e^{\frac{z^2}{4}}}{m!},
		\end{align}
		\begin{align*}	
			K_m(z)=&\frac{(m-1)!}{2}\left(\frac{z}{2}\right)^{-m}\sum_{k=0}^{m-1}\frac{(m-k-1)!}{(m-1)!k!}\left(-\frac{z^2}{4}\right)^k-\ln\left(\frac{z}{2}\right)\frac{1}{m!}\left(-\frac{z}{2}\right)^m\sum_{k=0}^\infty\frac{\left(\frac{z^2}{4}\right)^km!}{k!(m+k)!}\\
			&+\frac{1}{2m!}\left(-\frac{z}{2}\right)^m\sum_{k=0}^\infty \left(\psi(k+1)+\psi(m+k+1)\right)\frac{\left(\frac{z^2}{4}\right)^km!}{k!(m+k)!},
		\end{align*}
		where $\psi(l)=\sum_{s=1}^l\frac{1}{s}-\gamma$, $\gamma=\Gamma'(1)$ and $\Gamma(\tau)=\int_0^\infty t^{\tau-1}e^{-t}\,dt$ is the Gamma function. 
		Combining the fact $I_m(z)=I_{-m}(z)$ and $K_m(z)=K_{-m}(z)$ for all $m\in \N$ and above estimates and expressions, it shows that the series of functions is uniformly convergent, 
		which allows us to change the order of differentiation, integration and summation, i.e. 
		\begin{equation}
			\begin{split}
				&\mathcal{G}_H f(\rho,\phi)=-\sum_{m\in\mathbb{Z}}\iint_{\Pi}G_m(\rho,\rho_0)e^{im(\phi-\phi_0)}f(\rho_0,\phi_0)\rho_0\,d\rho_0\,d\phi_0,\\
				&\partial_\rho \mathcal{G}_Hf(\rho,\phi)=-\sum_{m\in\Z}\iint_{\Pi}\partial_\rho G_m(\rho,\rho_0)e^{im(\phi-\phi_0)}f(\rho_0,\phi_0)\rho_0\,d\rho_0\,d\phi_0.
			\end{split}
		\end{equation}
		Due to Fubini's theorem, we have, for any $\psi\in C_c^\infty$
		\begin{align*}
			&\iint_{\Pi}\left\{\frac{\rho}{1+\kappa^2 \rho^2}\partial_\rho (\mathcal{G}_H f)\partial_\rho \psi(\rho,\phi)-\frac{1}{\rho}\mathcal{G}_H f(\rho,\phi)\partial_\phi^2\psi(\rho,\phi)\right\}\,d\rho\,d\phi\\
			&=-\sum_{m\in\Z}\iint_{\Pi}f(\rho_0,\phi_0)\rho_0\left\{\int_0^{2\pi}e^{im(\phi-\phi_0)}\int_0^\infty\frac{\rho\partial_\rho G_m(\rho,\rho_0)\partial_\rho \psi}{1+\kappa^2\rho^2}+\frac{m^2G_m(\rho,\rho_0)\psi}{\rho}d\rho d\phi\right\}\\
			&=\sum_{m\in\Z}\iint_{\Pi}\left(\int_0^{2\pi}\frac{\psi(\rho_0,\phi)}{2\pi}e^{im\phi}\,d\phi\right)e^{-im\phi_0}f(\rho_0,\phi_0)\rho_0\,d\rho_0\,d\phi_0\\
			&=\iint_\Pi \sum_{m\in\Z}\psi_{-m}(\rho_0)e^{-im\phi_0}f(\rho_0,\phi_0)\rho_0\,d\rho_0\,d\phi_0\\
			&=\iint_{\Pi}\psi(\rho_0,\phi_0)f(\rho_0,\phi_0)\rho_0\,d\rho_0\,d\phi_0,
		\end{align*}
		which completes the proof.
	\end{proof}
	
	As a corollary of the Fourier expansion for the Green function, we obtain a family of trivial solutions for \eqref{eq1.6}.
	\begin{corollary}\label{trivialsolution}
		Let $\omega_0$ be a function radially symmetric around the origin. Then $\omega(x,t)=\omega_0(R_{-\Om t}x)$ solves \eqref{eq1.6} for any $\Om\in \mathbb R$.  
	\end{corollary}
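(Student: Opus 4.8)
The plan is to verify the two equations in \eqref{eq1.6} directly, starting from the elementary observation that a function radially symmetric about the origin is invariant under every rotation $R_\theta$. Hence $\omega(x,t)=\omega_0(R_{-\Om t}x)=\omega_0(x)$ is in fact \emph{independent of $t$}, so $\partial_t\omega\equiv 0$. It therefore suffices to show that the transport term $\nabla^\perp\varphi\cdot\nabla\omega$ vanishes, where $\varphi=\mathcal G_H\omega_0$ is the stream function furnished by the Proposition above (which shows $\mathcal G_H$ inverts $\mathcal L_H$ in the weak sense, so this is a legitimate choice of $\varphi$ solving $\mathcal L_H\varphi=\omega$).

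The key step is to show that $\varphi$ is itself radially symmetric about the origin. I would read this off the Fourier expansion \eqref{Fourier}: since $\omega_0(\rho_0)$ does not depend on the angular variable $\phi_0$, for each fixed $m\neq 0$ the angular integral $\int_0^{2\pi}e^{-im\phi_0}\,d\phi_0$ vanishes, so all modes $m\neq 0$ drop out of \eqref{integral} and one is left with
\[
	\varphi(\rho,\phi)=-2\pi\int_0^\infty G_0(\rho,\rho_0)\,\omega_0(\rho_0)\,\rho_0\,d\rho_0,
\]
which depends only on $\rho$. The interchange of summation and integration needed for this is exactly the one established inside the proof of the Proposition for compactly supported data; alternatively one can bypass the series entirely and argue from the manifest $\phi$-translation invariance of $\mathcal L_{\rho,\phi}$ in \eqref{eq2.2} (equivalently \eqref{Lpolar}) together with uniqueness of the stream function.

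Once $\varphi=\varphi(\rho)$ is known, the conclusion is immediate: $\nabla\varphi=\varphi'(\rho)\,x/\rho$, so $\nabla^\perp\varphi=\frac{\varphi'(\rho)}{\rho}(x_2,-x_1)$ is everywhere orthogonal to $x$, while $\nabla\omega_0=\frac{\omega_0'(\rho)}{\rho}x$ is parallel to $x$; hence $\nabla^\perp\varphi\cdot\nabla\omega_0\equiv 0$ pointwise in the smooth case. For the patch (or merely bounded) case, where $\omega_0$ is not differentiable, I would instead write the transport term as $\divv(\omega_0\nabla^\perp\varphi)$ using $\divv\nabla^\perp\varphi=0$, and observe that $\omega_0\nabla^\perp\varphi=\frac{\omega_0(\rho)\varphi'(\rho)}{\rho}(x_2,-x_1)$ is divergence-free in the distributional sense, since any field of the form $g(\rho)(x_2,-x_1)$ is. Together with $\partial_t\omega\equiv 0$ this gives the weak formulation of \eqref{eq1.6} for the stationary profile $\omega_0$, and the initial condition holds trivially.

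There is no serious obstacle here; the only points requiring a little care are the justification of the term-by-term angular integration in the Green-function series (handled by the estimates in the preceding Proposition, or avoided via the translation invariance of $\mathcal L_{\rho,\phi}$) and the distributional reading of $\nabla^\perp\varphi\cdot\nabla\omega_0$ when $\omega_0$ is only bounded, which the explicit structure $g(\rho)(x_2,-x_1)$ of $\omega_0\nabla^\perp\varphi$ makes transparent. This also confirms that the disks and annuli used as bifurcation starting points in Theorems \ref{thm-simply} and \ref{thm-doubly} are genuine trivial solutions.
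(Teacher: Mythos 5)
Your argument is correct and follows exactly the route the paper intends: Corollary \ref{trivialsolution} is stated there as an immediate consequence of the Fourier expansion \eqref{Fourier}, with the point being precisely that for angle-independent data all modes $m\neq 0$ integrate to zero, leaving a radial stream function whose perpendicular gradient is azimuthal and hence orthogonal to the radial $\nabla\omega_0$. Your additional care with the distributional reading via $\divv\bigl(g(\rho)(x_2,-x_1)\bigr)=0$ for the patch case is a detail the paper leaves implicit, and it is handled correctly.
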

	From the above corollary, we are able to give some typical examples for trivial solutions,  including the (helical) Rankine vortex $\omega_0=1_{B_a(0)} $ as well as the annulus $\omega_0=1_{B_{a_1}(0)\setminus \overline{B_{a_2}(0)}} $. 
	
	One can certainly study the singularity of the Green function $G_H(x,y)$ as $y$ approaches $x$ by the Fourier expansion in \eqref{Fourier}. However, we prefer to use    the following  Green's expansion in bounded domains developed in \cite{cao2024expansion} which was obtained by regularity theory for second order uniformly elliptic differential equations.  
	\begin{theorem}[Theorem 1.4 in \cite{cao2024expansion}]\label{decom}
		Let $\alpha \in(0,1)$ be an arbitrary constant and $G_{H,D}$ be Green's function for $-\divv(K_H\nabla(\cdot))$ in the bounded domain $D$ with zero Dirichlet boundary value. Then there exists a unique $\Phi_i\in F_{i+2+2(2i-1)}^{4i}$ for $i=1,2$ depending on $y\in D$ and function $H_2( x, y)=H_{2,y}( x)$, where $H_2\in C^2(D\times D)$ and $H_{2,(\cdot)}\in C(D,C^{2,\alpha}(\Bar{D}))$  such that
		\begin{equation}\label{2-15}
			G_{H,D}( x, y)=-\frac{\sqrt{{\rm det} K_H( y)}^{-1}}{2\pi}\ln|T_ y( x- y)|+\sum_{i=1}^2\Phi_i(T_ y( x- y))+H_2( x, y)\quad {\rm in}\ \Bar{D}\times D,
		\end{equation}
		where $T_y$ is the matrix given by the Cholesky decomposition i.e.
		$T_y^{-1}(T_y^{-1})^{t}=K_H(y)$ for all $y\in D$.
	\end{theorem}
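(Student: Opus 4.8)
\emph{Proof proposal.} Since Theorem~\ref{decom} is quoted from \cite{cao2024expansion}, I only outline how its proof runs, specialised to the explicit matrix $K_H$ of \eqref{KH}. The scheme is the classical \emph{parametrix plus elliptic bootstrap}. Two structural facts about $K_H$ make it work: $K_H$ is real-analytic on all of $\mathbb R^2$ (its denominator $h^2+x_1^2+x_2^2$ never vanishes), and on any bounded domain $D$ it is uniformly elliptic, its eigenvalues being $1$ and $h^2/(h^2+|x|^2)$, which is bounded below by a constant depending only on $h$ and the diameter of $D$; moreover $\det K_H(x)=h^2/(h^2+|x|^2)$, so the prefactor $\sqrt{\det K_H(y)}^{-1}$ in \eqref{2-15} is exactly $h^{-1}\sqrt{h^2+|y|^2}$. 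Hence Schauder and Calder\'on--Zygmund theory apply to $\mathcal L_H=-\divv(K_H\nabla(\cdot))$ on $\bar D$. I would begin by building the frozen-coefficient parametrix: fix $y\in D$, set $L_y:=-\divv(K_H(y)\nabla(\cdot))$, and let $T_y$ be the Cholesky factor with $T_y^{-1}(T_y^{-1})^{t}=K_H(y)$ (depending smoothly on $y$, since $y\mapsto K_H(y)$ is smooth and positive definite). The linear change of variables $z=T_y(x-y)$ conjugates $L_y$ to the flat Laplacian up to the constant Jacobian $\sqrt{\det K_H(y)}$, so the distributional fundamental solution of $L_y$ at $y$ is precisely $\Gamma^{(0)}_y(x)=-\tfrac{1}{2\pi}\sqrt{\det K_H(y)}^{-1}\ln|T_y(x-y)|$, the leading term of \eqref{2-15}.

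Next I would compute the residual and correct it iteratively. From $\mathcal L_H\Gamma^{(0)}_y=L_y\Gamma^{(0)}_y+(\mathcal L_H-L_y)\Gamma^{(0)}_y=\delta_y-\divv\bigl((K_H(x)-K_H(y))\nabla\Gamma^{(0)}_y\bigr)$, Taylor-expanding the analytic map $x\mapsto K_H(x)$ at $y$ and using $|\nabla\Gamma^{(0)}_y(x)|\sim|x-y|^{-1}$ shows that the residual $\mathcal R_0(\cdot,y):=-(\mathcal L_H-L_y)\Gamma^{(0)}_y$ is a locally integrable function whose singularity at $x=y$ is a finite sum of profiles of the form (homogeneous polynomial in $x-y$)$\times|T_y(x-y)|^{-k}$, the worst being homogeneous of degree $-1$. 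For $i=1,2$ I would then solve the constant-coefficient problem $L_y\Phi_i=(\text{the singular profiles of the residual left after stage }i-1)$, taking $\Phi_i$ to be the natural particular solution built from homogeneous polynomials times $|z|^{-k}$ times $(\ln|z|)^{j}$; in the variable $z=T_y(x-y)$ these are exactly the functions $\Phi_i\in F^{4i}_{i+2+2(2i-1)}$ of \eqref{2-15}, the index $4i$ counting the constituent profiles. One then checks that each such correction removes from the residual all terms down to and including the logarithmically singular ones, so that after the two corrections the remaining residual $\mathcal R_2(\cdot,y)$ is H\"older continuous on $\bar D$, with $y\mapsto\mathcal R_2(\cdot,y)$ continuous into $C^\alpha(\bar D)$.

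Finally I would set $H_2(x,y):=G_{H,D}(x,y)-\Gamma^{(0)}_y(x)-\Phi_1(T_y(x-y))-\Phi_2(T_y(x-y))$. For fixed $y$ this solves $\mathcal L_H H_{2,y}=\mathcal R_2(\cdot,y)$ in $D$ with Dirichlet data $H_{2,y}|_{\partial D}=-\bigl(\Gamma^{(0)}_y+\Phi_1(T_y(\cdot-y))+\Phi_2(T_y(\cdot-y))\bigr)|_{\partial D}$, which is smooth because $y$ is interior to $D$ and hence $x-y\neq 0$ on $\partial D$; Schauder estimates then give $H_{2,y}\in C^{2,\alpha}(\bar D)$, with uniqueness from the maximum principle. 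The joint regularity $H_2\in C^2(D\times D)$ and the continuity $H_{2,(\cdot)}\in C(D,C^{2,\alpha}(\bar D))$ follow by differencing/differentiating in $y$: the boundary data and right-hand side depend smoothly on $y$ (through $T_y$, the Taylor coefficients of $K_H$ at $y$, and the restrictions to $\partial D$), and the Schauder constants are uniform for $y$ in compact subsets of $D$ because $K_H$ is uniformly elliptic on $\bar D$.

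The hard part is the iteration in the middle step: at each stage one must pin down the \emph{exact} collection of homogeneous profiles dominating the residual, verify that the constant-coefficient Poisson-type problem for $\Phi_i$ is solvable \emph{within} the prescribed class $F^{4i}_{i+2+2(2i-1)}$ (which caps the homogeneity degrees and logarithmic orders allowed), and confirm that each correction lowers the singularity order of the residual by exactly one — so that precisely two corrections land the remainder in $C^\alpha$, one derivative short of the target $C^{2,\alpha}$ that Schauder then recovers. The bookkeeping is heavier than in the constant-coefficient case because $(\mathcal L_H-L_y)\Phi_i$ feeds new logarithmic factors back into the residual, and because every object in the construction must be carried along with continuous — indeed $C^2$ — dependence on the pole $y$.
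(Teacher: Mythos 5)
The paper offers no proof of Theorem~\ref{decom}: it is imported verbatim as Theorem~1.4 of \cite{cao2024expansion}, so there is nothing internal to compare your argument against. Your outline --- the frozen-coefficient parametrix $\Gamma^{(0)}_y(x)=-\tfrac{1}{2\pi}\sqrt{\det K_H(y)}^{-1}\ln|T_y(x-y)|$, iterative correction of the residual $-\divv\bigl((K_H(x)-K_H(y))\nabla\Gamma^{(0)}_y\bigr)$ by constant-coefficient solves, and a final Schauder step producing $H_2$ --- is exactly the ``regularity theory for second order uniformly elliptic differential equations'' route that the present paper attributes to \cite{cao2024expansion}, and your structural observations about $K_H$ (real-analyticity, uniform ellipticity on bounded sets, $\det K_H(x)=h^2/(h^2+|x|^2)$, hence smooth dependence of $T_y$ on $y$) are correct and are precisely what make that route available here.

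Two caveats. First, essentially all of the content of the theorem lives in the step you explicitly defer: identifying the exact singular profiles of the successive residuals, showing the constant-coefficient problems for $\Phi_1,\Phi_2$ are solvable \emph{within} the prescribed classes $F^{4i}_{i+2+2(2i-1)}$, and verifying that exactly two corrections leave a residual that is $C^\alpha$ with continuous dependence on the pole $y$. Asserting that this bookkeeping ``works out'' is not a proof; for a quoted theorem an outline is acceptable, but you have reproduced the scaffolding rather than the argument. Second, your gloss on the function classes does not match the definition recorded in Remark~\ref{re2-4}: there the superscript $2+2m$ in $F^{2+2m}_{k+2m}$ is the degree $|\alpha|$ of the monomials $x^\alpha/|x|^{k+2m}$ in the first span (with only a first-power logarithm appearing, in the separate summand ${\rm span}\{x^\alpha\ln|x|:|\alpha|=k-2\}$), not ``the number of constituent profiles,'' and no powers $(\ln|z|)^j$ with $j\ge 2$ occur. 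If you intend to actually carry out the iteration, these homogeneities must be tracked exactly, since they determine whether $(\mathcal L_H-L_y)\Phi_i$ is integrable and whether the final residual is H\"older rather than merely bounded --- which is what the subsequent regularity analysis of $\mathcal F$ in Section~\ref{sec3} relies on.
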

	\begin{remark}\label{re2-4}
		The sets in above are defined by 
		\begin{equation*}
			F_{k+2m}^{2+2m}:=\left({\rm span}\left\{\frac{ x^\alpha}{| x|^{k+2m}}:|\alpha|=2+2m\right\}\setminus\mathbb{R}[ x]\right)\oplus {\rm span}\left\{ x^\alpha\ln| x|:|\alpha|=k-2,\ \alpha\in\mathbb{N}^2\right\}, 
		\end{equation*}where $\mathbb{R}[x]$ is the set of real polynomials with variables $x_i$, hence the most singular term in $G_{H,D}$ is approximately logarithmic.
	\end{remark}
	The above  Green's expansion in bounded domains plays a central role in  our later analysis of regularity for the functional appearing in the contour dynamics equation. Our key observation is that the Green  function in $\mathbb R^2$ differs merely from the Green  function in a bounded region by a smooth function, thus the Green  function in $\mathbb R^2$  has the same singularity as that in \eqref{2-15}.

	\section{Simply connected time-periodic solutions}\label{sec3}
	 This section is devoted to the construction of helical Kelvin waves and the proof of Theorem \ref{thm-simply}.

	\subsection{Contour dynamics for helical Kelvin waves}
	Now, we give an equivalent reformulation of \eqref{eq1.6} for uniformly rotating solutions. Let us consider the deformation of a circle with radial $a$, namely
	\begin{equation}\label{eq2.16}
		\sigma(\theta)=R(\theta)e^{i\theta},\ \ \  R(\theta)=\sqrt{a^2+2r(\theta)}e^{i\theta}, \quad {\rm for}\ \theta\in\mathbb{T}.
	\end{equation}
	\begin{proposition}\label{CDE-s}
		Let $\Omega\in \mathbb{R}$. The deformation defined by \eqref{eq2.16} generates a uniformly rotating vortex patch solution with angular velocity $\Omega$ to equations \eqref{eq1.6} if and only if it satisfies the nonlinear equation
		\begin{equation*}
			\mathcal{F}(\Omega,r)=0,
		\end{equation*}
		where, for $\theta\in\mathbb{T}$, 
		\begin{equation}\label{eq2-17}
			\mathcal{F}(\Omega,r)(\theta)=\Omega r'(\theta)+\partial_\theta\left(\int_{0}^{2\pi}\int_{0}^{R(\phi)}G_H\left(R(\theta) e^{i\theta},\rho e^{i\phi}\right)\rho \,d\rho d\phi\right).
		\end{equation}
	\end{proposition}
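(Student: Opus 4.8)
\emph{Sketch of the argument.} The plan is the classical reduction of a uniformly rotating patch to a contour equation (cf. \cite{burbea1982motions,hmidi2013boundary}), adapted here to the divergence-form operator $\mathcal L_H$, in four steps: (i) use the rotational invariance of $\mathcal L_H$ to see that the stream function rotates rigidly with the patch; (ii) pass to the co-rotating frame and reduce solvability of \eqref{eq1.6} to the fact that a suitable relative stream function $\Psi$ has $\nabla^\perp\Psi$ tangent to $\partial D$; (iii) since $\partial D$ is a connected $C^1$ Jordan curve, rephrase this tangency as $\Psi$ being constant on $\partial D$; (iv) parametrize $\partial D$ by $\sigma$ from \eqref{eq2.16}, differentiate the identity $\Psi(\sigma(\theta))\equiv\text{const}$ in $\theta$, and read off $\mathcal F(\Omega,r)=0$.

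For step (i): the polar form \eqref{eq2.2} shows that $\mathcal L_H$ involves only $\rho$ and $\partial_\phi$, hence $\mathcal L_H(\Phi\circ R_\alpha)=(\mathcal L_H\Phi)\circ R_\alpha$ for every rotation $R_\alpha$ as in \eqref{def-of-Rtheta}. Setting $\varphi_0:=\mathcal G_H 1_D$ (well-defined since $1_D$ has compact support) and $\varphi(x,t):=\varphi_0(R_{-\Omega t}x)$, one obtains $\mathcal L_H\varphi(\cdot,t)=\omega(\cdot,t)$ for every $t$. Moreover $\varphi_0$ is at least $C^1$ by elliptic regularity, the singularity of $G_H$ along the diagonal being mild (logarithmic, by Theorem~\ref{decom}, Remark~\ref{re2-4}, and the fact that the free Green function differs from a bounded-domain one by a smooth term), so $\nabla^\perp\varphi$ is a bounded, continuous, divergence-free field and the distributional formulation of \eqref{eq1.6} is meaningful.

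For steps (ii)--(iii): substituting $z=R_{-\Omega t}x$ and using $\partial_t(R_{-\Omega t}x)=-\Omega A\,R_{-\Omega t}x$ with $A=\left(\begin{smallmatrix}0&1\\-1&0\end{smallmatrix}\right)$ the infinitesimal generator of $\{R_\alpha\}$, the orthogonality of $R_{-\Omega t}$, and the identity $(R_\alpha u)^\perp=R_\alpha(u^\perp)$, a direct chain-rule computation gives
\[
\partial_t\omega+\nabla^\perp\varphi\cdot\nabla\omega=\bigl(\nabla^\perp\Psi\cdot\nabla 1_D\bigr)(z),\qquad \Psi:=\varphi_0-\tfrac{\Omega}{2}|z|^2,
\]
where the sign in front of $\tfrac{\Omega}{2}|z|^2$ is the one dictated by the orientation convention in \eqref{def-of-Rtheta}. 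Hence $\omega$ solves \eqref{eq1.6} in $\mathcal D'$ if and only if $\nabla^\perp\Psi\cdot\nabla 1_D=0$ in $\mathcal D'(\mathbb R^2)$; since $v_{\mathrm{rel}}:=\nabla^\perp\Psi$ is continuous and divergence-free and $\partial D$ is a connected $C^1$ Jordan curve, this is equivalent to $v_{\mathrm{rel}}$ being everywhere tangent to $\partial D$, and, $v_{\mathrm{rel}}$ being $\nabla\Psi$ rotated by $\pi/2$, to $\Psi$ being constant on $\partial D$, i.e. $\tfrac{d}{d\theta}\bigl[\Psi(\sigma(\theta))\bigr]=0$ for all $\theta\in\mathbb T$. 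For step (iv), $D=\{\rho e^{i\phi}:\rho<R(\phi)\}$ gives, on $\partial D$,
\[
\varphi_0(\sigma(\theta))=\int_D G_H(\sigma(\theta),y)\,dy=\int_0^{2\pi}\!\!\int_0^{R(\phi)}G_H\bigl(R(\theta)e^{i\theta},\rho e^{i\phi}\bigr)\rho\,d\rho\,d\phi,
\]
while $|\sigma(\theta)|^2=R(\theta)^2=a^2+2r(\theta)$ gives $\tfrac{d}{d\theta}\bigl[\tfrac12|\sigma(\theta)|^2\bigr]=r'(\theta)$; since $\varphi_0\in C^1$ and $\sigma$ is $C^1$, the map $\theta\mapsto\varphi_0(\sigma(\theta))$ is $C^1$, so differentiating $\Psi(\sigma(\theta))=\varphi_0(\sigma(\theta))-\tfrac{\Omega}{2}(a^2+2r(\theta))$ and multiplying by $-1$ yields precisely
\[
\mathcal F(\Omega,r)(\theta)=\Omega r'(\theta)+\partial_\theta\!\left(\int_0^{2\pi}\!\!\int_0^{R(\phi)}G_H\bigl(R(\theta)e^{i\theta},\rho e^{i\phi}\bigr)\rho\,d\rho\,d\phi\right),
\]
which is the claimed equivalence.

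The step I expect to be the main obstacle is (iii): the rigorous equivalence between ``$\omega(x,t)=1_D(R_{-\Omega t}x)$ solves the transport equation in the sense of distributions'' and the pointwise tangency of $v_{\mathrm{rel}}$ along the merely $C^1$ (not a priori smooth) boundary $\partial D$. It relies on $\varphi_0\in C^1$ so the velocity is continuous, on the divergence-free structure, and on a careful integration-by-parts / characteristics argument; the remaining ingredients — the rotational invariance of $\mathcal L_H$, the elliptic regularity of $\varphi_0$, the chain-rule identity in step (ii), and the $\theta$-differentiation in step (iv) — are routine, modulo the orientation bookkeeping that fixes the sign of the $\Omega r'(\theta)$ term.
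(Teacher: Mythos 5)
Your proposal follows essentially the same route as the paper: reduce to tangency of the relative velocity on $\partial D$, hence constancy of a relative stream function on $\partial D$, then differentiate in $\theta$ along the parametrization \eqref{eq2.16}. The paper simply cites \cite{hmidi2013boundary} for the equivalence between the distributional formulation and the tangency condition, whereas you carry out the co-rotating-frame computation explicitly; your step (iii), which you correctly identify as the only non-routine point, is left at the same level of detail as in the paper (neither gives the integration-by-parts argument in full), so that is acceptable.

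There is, however, one concrete inconsistency in your step (iv). With your choice $\Psi=\varphi_0-\tfrac{\Omega}{2}|z|^2$ you get $\tfrac{d}{d\theta}\bigl[\Psi(\sigma(\theta))\bigr]=\partial_\theta\bigl(\int_0^{2\pi}\int_0^{R(\phi)}G_H\,\rho\,d\rho\,d\phi\bigr)-\Omega r'(\theta)$, and multiplying by $-1$ flips \emph{both} signs, producing $\Omega r'(\theta)-\partial_\theta(\cdots)$, which is not \eqref{eq2-17}. To land on $\mathcal F(\Omega,r)=\Omega r'+\partial_\theta(\cdots)$ you need the relative stream function $\varphi_0+\tfrac{\Omega}{2}|x|^2$, i.e. the rigid velocity $\Omega(-x_2,x_1)=-\nabla^\perp\bigl(\tfrac{\Omega}{2}|x|^2\bigr)$ corresponding to the counterclockwise convention $x\mapsto e^{-i\Omega t}x$ that the paper uses inside its proof. (Your minus sign is in fact the one dictated by the literal matrix $R_\theta$ of \eqref{def-of-Rtheta}, which is a clockwise rotation, so the discrepancy is inherited from an orientation ambiguity already present between the paper's statement, written with $R_{-\Omega t}$, and its proof, written with $e^{-i\Omega t}$.) Since the entire content of this proposition is the exact formula for $\mathcal F$, and the relative sign between $\Omega r'$ and the Green-function term propagates directly into the bifurcation values $\Omega_m$ in Lemma \ref{lem2.2} and Theorem \ref{thm-simply}, you should fix the orientation once and for all and verify that the two signs come out as in \eqref{eq2-17}; as written, your final identity does not follow from your penultimate one.
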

	\begin{proof}
		Assume that the boundary of domain $D$ can be parameterized by $\sigma(\theta)$ in \eqref{eq2.16}. By results in \cite{hmidi2013boundary}, the fact that the patch $1_{D}(e^{-i\Omega t}\cdot)$ solves \eqref{eq1.6} is equivalent to
		\begin{equation*}
			\nabla^{\perp}(\varphi( x)+\frac{\Omega}{2}| x|^2) \cdot \bm{n}(x)=0, \ \  x\in\partial D,
		\end{equation*}
		where $ \bm{n}(x)$ stands for the normal vector of $\partial D$ at $x$. From the above identity, we deduce
		\begin{equation*}
			\varphi( x)+\frac{\Omega}{2}| x|^2={\rm Constant} \quad \text{for}\  x\in\partial D.
		\end{equation*}

		Denoting $x=\sigma(\theta)$ and differentiating the above equation  with respect to $\theta$, we get  \eqref{eq2-17}.
	\end{proof}
	
	 Corollary \ref{trivialsolution} suggests that the above contour dynamics equation has a family of trivial solutions by taking $r=0$. That is, $$\mathcal F(\Om, 0)\equiv 0, \ \ \ \forall\ \Om\in \mathbb R.$$ This fact can also be verified through  direct computations by using the Fourier expansion of $G_H$ obtained in section \ref{sec2}. In whats follows, we will find non-radial solutions bifurcating from this family of trivial solutions.

	\subsection{Functional setting and regularity analysis}
	\subsubsection{Function spaces}
	The objective of this subsection is to introduce the function space necessary for applying the Crandall-Rabinowitz bifurcation theorem \ref{CR}. We will address our functional within the framework of Hölder spaces. 
	
	More precisely, for a fixed integer $m\in \mathbb{N}$, the real Banach space $\mathbb{X}_m$ and $\mathbb{Y}_m$ are defined by 
	\begin{equation*}
		\mathbb{X}_{m}:=\left\{f\in C^{1,1/2}(\mathbb{T}):~f(\theta)=\sum_{n=1}^{\infty}f_n\cos(nm\theta),~f_n\in\mathbb{R},~\theta\in\mathbb{T}\right\}
	\end{equation*}
	and
	\begin{equation*}
		\mathbb{Y}_m:=\left\{g\in C^{1/2}(\mathbb{T}):~g(\theta)=\sum_{n=1}^{\infty} g_n\sin(nm\theta),~g_n\in\mathbb{R},~\theta\in\mathbb{T}\right\}
	\end{equation*}
	and equipped with the standard $C^{1,1/2}$ and $C^{1/2}$ norms, respectively. On the other hand, we will not define $\mathcal{F}$ on the entire $\mathbb{X}_m$ but a small open subset of $\mathbb{X}_m$, namely
	\begin{equation*}
		\mathbb{B}_{m,\varepsilon}:=\left\{f\in\mathbb{X}_m:\|f\|_{C^{1,1/2}}<\varepsilon\right\},
	\end{equation*}
	for some small $\varepsilon>0$. Later, we will consider $\mathcal{F}$ as a mapping from $\mathbb{R}\times \mathbb{B}_{m,\varepsilon}$ to $\mathbb{Y}_m$, which is well-defined and sufficiently smooth when $\varepsilon$ is sufficiently small.

	For the convenience of later calculations, we define some basic operations. Given two vectors $ x=(x_1,x_2)$ and $ y=(y_1,y_2)$, we define the tensor-product
	\begin{equation*}
		x\otimes y:=\begin{pmatrix}
			x_1y_1&x_1y_2\\
			x_2y_1&x_2y_2
		\end{pmatrix}.
	\end{equation*}
	For any given two matrices $A=\left(A_{ij}\right)_{1\leq i,j\leq 2}$ and $B=\left(B_{ij}\right)_{1\leq i,j\leq 2}$, we define the double contraction of $A$ and $B$ by
	\begin{equation*}
		A:B:=\sum_{1\leq i,j\leq 2}A_{ij}B_{ij}.
	\end{equation*}
	A simple relationship between the tensor-product and the double contraction is given by
	\begin{equation*}
	\left(	 x\otimes y\right):\left(\textbf{a}\otimes\textbf{b}\right)=\left( x\cdot\textbf{a}\right)\left( y\cdot\textbf{b}\right),
	\end{equation*}
	where $ x$,  $ y$,  $\textbf{a}$ and  $\textbf{b}$ are vectors in $\mathbb{R}^2$.

\subsubsection{Regularity of the functional $\mathcal{F}$}
In this subsection, we leverage the property that the Green function in the entire space and the Green function in a bounded domain exhibit the same singularity locally. We will replace the Green function in the functional $\mathcal{F}$ (from the entire space) with that in the bounded domain to verify regularity. More precisely, we have the following proposition.
\begin{proposition}\label{prop3-2}
	Given $m\in\mathbb{N}$ and $\varepsilon>0$ small enough, the functional
	\begin{equation*}
		\mathcal{F}:\mathbb{R}\times\mathbb{B}_{m,\varepsilon}\to\mathbb{Y}_m
	\end{equation*}
	is well-defined and belongs to $C^1\left(\mathbb{R}\times\mathbb{B}_{m,\varepsilon},\mathbb{Y}_m\right)$. 
	Moreover, its mixed partial derivative $\partial_\Omega d_r\mathcal{F}$ exists in the sense that
	\begin{equation*}
		\partial_\Omega d_r\mathcal{F}:\mathbb{R}\times\mathbb{B}_{m,\varepsilon}\to\mathcal{L}(\mathbb{X}_m,\mathbb{Y}_m)
	\end{equation*}
	and belongs to $C^0\left(\mathbb{R}\times\mathbb{B}_{m,\varepsilon},\mathcal{L}(\mathbb{X}_m,\mathbb{Y}_m)\right)$.
\end{proposition}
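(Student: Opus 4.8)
\textbf{Proof plan for Proposition \ref{prop3-2}.}

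The plan is to split the functional $\mathcal{F}(\Omega,r)(\theta) = \Omega r'(\theta) + \partial_\theta \mathcal{G}(r)(\theta)$, where $\mathcal{G}(r)(\theta) = \int_0^{2\pi}\int_0^{R(\phi)} G_H(R(\theta)e^{i\theta},\rho e^{i\phi})\rho\,d\rho\,d\phi$, into three parts according to the decomposition of the Green function. First I would observe that the linear term $\Omega r'(\theta)$ poses no difficulty: it is trivially jointly smooth from $\mathbb{R}\times\mathbb{X}_m$ to $\mathbb{Y}_m$ (it maps $\cos(nm\theta)$ coefficients to $\sin(nm\theta)$ coefficients), and its mixed derivative $\partial_\Omega d_r$ of this term is just the map $f\mapsto f'$, continuous into $\mathcal{L}(\mathbb{X}_m,\mathbb{Y}_m)$. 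So the entire content lies in the nonlinear operator $\mathcal{G}$. Here I would invoke the key observation stated before the proposition: on a large bounded domain $D$ containing the relevant patches (say a disk of radius $2a + 2\varepsilon$), one has $G_H(x,y) = G_{H,D}(x,y) + \text{(smooth correction)}$, where the correction is harmonic-type and jointly smooth in $(x,y)$ on $\bar D\times \bar D$, hence contributes a $C^\infty$ term to $\mathcal{G}$. Thus it suffices to analyze $\mathcal{G}$ with $G_H$ replaced by $G_{H,D}$, and then to use the expansion from Theorem \ref{decom}:
\begin{equation*}
	G_{H,D}(x,y) = -\frac{(\det K_H(y))^{-1/2}}{2\pi}\ln|T_y(x-y)| + \sum_{i=1}^2 \Phi_i(T_y(x-y)) + H_2(x,y).
\end{equation*}

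Next I would treat the three resulting pieces of $\mathcal{G}$ in turn. The $H_2$ term, being $C^2$ in both arguments with $H_{2,(\cdot)}\in C(D,C^{2,\alpha}(\bar D))$, gives a contribution to $\mathcal{G}(r)$ that is regular in $r$ by straightforward differentiation under the integral sign (the domain of integration $\{\rho < R(\phi)\}$ depends on $r$, but this dependence is smooth since $R(\phi) = \sqrt{a^2 + 2r(\phi)}$ is a smooth function of $r$ for $\|r\|$ small, being bounded away from $0$). The homogeneous terms $\Phi_i(T_y(x-y))$ lie in the spaces $F^{4i}_{i+2+2(2i-1)}$ described in Remark \ref{re2-4}: modulo polynomials (which integrate to smooth things), these are finite sums of terms $\frac{P(x-y)}{|T_y(x-y)|^{k}}$ with $P$ homogeneous of appropriate degree, plus terms $Q(x-y)\ln|x-y|$. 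The rational terms have the form of classical Riesz-type / Calderón–Zygmund kernels of degree $\geq -2$ — actually of degree $\geq 0$ after accounting for the homogeneity degrees $2+2m$ in the numerator against $|x|^{k+2m}$ in the denominator, so these are in fact bounded or better, giving harmless contributions. The genuinely singular piece is the logarithmic leading term. For this I would write $\ln|T_\theta(\sigma(\theta)-\sigma(\phi))|$ where $\sigma(\theta) = R(\theta)e^{i\theta}$, and factor out $\ln|\theta - \phi|$ plus a remainder: the standard move is $\sigma(\theta) - \sigma(\phi) = (\theta-\phi)\cdot g(\theta,\phi;r)$ where $g$ is smooth in all arguments and nonvanishing (since $\sigma$ is an embedding for $\|r\|$ small), so
\begin{equation*}
	\ln|T_\theta(\sigma(\theta)-\sigma(\phi))| = \ln|\theta-\phi| + \ln|T_\theta g(\theta,\phi;r)|,
\end{equation*}
the second term being $C^1$-smooth in $r$ jointly with its arguments. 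The operator $r \mapsto \partial_\theta \int \ln|\theta-\phi|(\cdots)\,d\phi$ is then recognized, after integration by parts in $\phi$, as (essentially) the periodic Hilbert transform composed with smooth operations, which is bounded $C^{1/2}\to C^{1/2}$ (this is where the choice of Hölder spaces $\mathbb{X}_m, \mathbb{Y}_m$ pays off) and whose $r$-dependence is affine-plus-smooth, so $C^1$.

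The main obstacle, as I see it, is bookkeeping the $r$-dependence carefully through all the composed maps — the domain of $\rho$-integration, the basepoint $x = R(\theta)e^{i\theta}$ appearing inside $T_y$, the argument $x - y$ inside the homogeneous pieces $\Phi_i$, and the cancellation structure that produces the logarithmic (rather than power) singularity — while simultaneously verifying at each stage that the output lands in $\mathbb{Y}_m$ (i.e. is an odd function with only $\sin(nm\theta)$ modes), which follows from the $m$-fold symmetry and reflection symmetry of $G_H$ (rotational invariance of $\mathcal{L}_H$ plus the structure of $K_H$) together with the parity of $r\in\mathbb{X}_m$. Once $\mathcal{F}\in C^1$ is established, the existence and continuity of $\partial_\Omega d_r\mathcal{F}$ is comparatively easy: $d_r\mathcal{F}(\Omega,r) = \Omega\, d_r(r\mapsto r') + d_r(\partial_\theta\mathcal{G})$, so $\partial_\Omega d_r\mathcal{F} = d_r(r\mapsto r')$ is the constant (in $(\Omega,r)$) bounded operator $f\mapsto f'$, manifestly in $C^0(\mathbb{R}\times\mathbb{B}_{m,\varepsilon},\mathcal{L}(\mathbb{X}_m,\mathbb{Y}_m))$ — this is precisely the structural reason the mixed derivative behaves so well, namely that $\Omega$ enters $\mathcal{F}$ only linearly and only multiplying $r'$. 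I would close by noting that Appendix \ref{apb}'s Hölder estimates for the relevant integral operators are exactly what is invoked to make the logarithmic-kernel and homogeneous-kernel bounds rigorous.
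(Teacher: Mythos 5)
Your plan follows the same route as the paper: isolate the nonlinear term, replace $G_H$ by $G_{H,B}$ on a large ball modulo a smooth correction, invoke the expansion of Theorem \ref{decom} to reduce to the logarithmic leading term, control the resulting integral operators with the H\"older estimates of Appendix \ref{apb}, and note that $\partial_\Omega d_r\mathcal{F}$ is just the constant operator $f\mapsto f'$. Two concrete gaps remain, however. First, your dismissal of the correctors $\Phi_i$ as ``of degree $\geq 0$, hence bounded or better'' rests on a miscount: matching $\Phi_i\in F^{4i}_{i+2+2(2i-1)}$ with the template $F^{2+2m}_{k+2m}$ of Remark \ref{re2-4} gives $m=2i-1$ and $k=i+2$, so the rational pieces $x^\alpha/|x|^{k+2m}$ with $|\alpha|=2+2m$ are homogeneous of degree $2-k=-i$, not nonnegative. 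What one actually uses is the assertion of Remark \ref{re2-4} that the worst singularity of $G_{H,D}$ is (approximately) logarithmic, not boundedness of the individual homogeneous terms.

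Second, and more seriously, $C^1$ regularity requires both that $d_r\mathcal{F}(\Omega,r)[f]$ exist in $\mathbb{Y}_m$ and that $r\mapsto d_r\mathcal{F}(\Omega,r)$ be continuous into $\mathcal{L}(\mathbb{X}_m,\mathbb{Y}_m)$; the latter occupies roughly half of the paper's proof (the estimates of $\mathcal{E}(r_1)[f]-\mathcal{E}(r_2)[f]$, the terms $\mathcal{J}_1,\mathcal{J}_2$, and the uniform $C^{1/2}$ bound on $\nabla E$) and is absent from your plan. Relatedly, the step that makes the $r$-derivative of the logarithmic \emph{bulk} integral well-defined is to integrate by parts in $y$ \emph{before} differentiating in $r$ --- otherwise the differentiated kernel behaves like $|x-y|^{-2}$ and is not absolutely integrable over the two-dimensional patch. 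Your factorization $\sigma(\theta)-\sigma(\phi)=(\theta-\phi)\,g(\theta,\phi;r)$ and the Hilbert-transform picture address only the resulting boundary integral (the analogue of $\mathcal{S}_{11}$); the bulk remainders (the analogues of $\mathcal{S}_{12}$ and $\mathcal{S}_{122}$, which require a further integration by parts onto $\nabla^2A$) are not covered by that device and need the separate treatment the paper gives them.
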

\begin{proof}
	By the definition of $\mathcal{F}$ in \eqref{eq2-17} and the properties of $G_H$, it suffices to show that the mapping
	\begin{equation*}
		r\mapsto\partial_\theta\left(\int_D G_H(R(\theta)e^{i\theta}, y)d y\right)
	\end{equation*}
	belongs to $C^1\left(\mathbb{B}_{m,\varepsilon},\mathbb{Y}_m\right)$.

	Assuming that $\varepsilon<1$, we can then find a large ball $B=B_{\hat{\rho_0}}(0)\subset\mathbb{R}^2$ such that the bounded connected component $D$ of  $\mathbb{R}^2\setminus\partial D:=\mathbb{R}^2\setminus\left\{(R(\theta)=\sqrt{a^2+2r(\theta)},\theta):\theta\in\mathbb{T}\right\}$ is contained compactly in $B$ for any $r\in \mathbb{B}_{m,\varepsilon}$. Since
	\begin{equation*}
		-\divv \left(K_H\nabla \left(G_H(\cdot,y)-G_{H,B}(\cdot,y)\right)\right)=0~~~\text{in} ~~B,
	\end{equation*}
	by elliptic regularity, we have that the function $H(x,y):=G_H(x,y)-G_{H,B}(x,y)$ belongs to $C^\infty\left(\overline{D\times D}\right)$ for any $r\in \mathbb{B}_{m,\varepsilon}$ (note that $D$ depends on $r$). Then we can rewrite
	\begin{equation*}
		\partial_\theta\left(\int_DG_H(R(\theta)e^{i\theta}, y)d y\right)=\partial_\theta\left(\int_DH(R(\theta)e^{i\theta}, y)d y\right)+\partial_\theta\left(\int_DG_{H,B}(R(\theta)e^{i\theta}, y)d y\right),
	\end{equation*}
	where the first term of the right hand side of above equation has obviously $C^1$ regularity with respect to $r$ due to the regularity of $H$. Hence the problem is reduced to study the regularity of the second term. Furthermore, combining the expansion \eqref{2-15} of $G_{H,B}$, Remark \ref{re2-4} and elliptic regularity as above, we only need to studying the most singular term, namely
	\begin{equation*}
		r\mapsto \mathcal{S}(r)(\theta)=\partial_\theta\left(\int_DA(y)\ln\frac{1}{\left|T_y(R(\theta)e^{i\theta}- y)\right|}d y\right),
	\end{equation*}
	where $A(y)=\sqrt{\det K(y)}^{-1}$ . Direct computation gives
	\begin{align*}
		\mathcal{S}(r)(\theta)=&~\partial_\theta\left(R(\theta)e^{i\theta}\right)\cdot\int_D A( y)\nabla_ x\ln\frac{1}{|T_y( x- y)|}\Big|_{ x=R(\theta)e^{i\theta}}d y\\
		=&~-\partial_\theta\left(R(\theta)e^{i\theta}\right)\cdot\int_D A( y)\nabla_ y\ln\frac{1}{|T_ y( x- y)|}\Big|_{ x=R(\theta)e^{i\theta}}d y\\
		&~+\partial_\theta\left(R(\theta)e^{i\theta}\right)\cdot\int_DA( y)\frac{Q( x, y)T_ y( x- y)}{|T_ y( x- y)|^2}\Big|_{ x=R(\theta)e^{i\theta}}d y\\
		=:&\mathcal{S}_1(r)(\theta)+\mathcal{S}_2(r)(\theta),
	\end{align*}
	where
	\begin{equation*}
		Q( x, y)=
		\begin{pmatrix}
			(x_1-y_1)\partial_{y_1}T_ y^{11}&(x_1-y_1)\partial_{y_1}T_ y^{21}\\
			(x_2-y_2)\partial_{y_2}T_ y^{12}&(x_2-y_2)\partial_{y_2}T_ y^{22}
		\end{pmatrix}.
	\end{equation*}
	One can easily check that
	\begin{equation}\label{eq2-8}
		\frac{A( y)Q( x, y)T_ y( x- y)}{|T_ y( x- y)|^2}=O(1),~~~~\nabla_ x\left(\frac{A( y)Q( x, y)T_ y( x- y)}{|T_ y( x- y)|^2}\right)=O\left(\frac{1}{| x- y|}\right),
	\end{equation}
	as $ x\to y$. Using this fact, we can derive that $\mathcal{S}_2\in C^1\left(B_{m,\varepsilon},\mathbb{Y}_m\right)$. Now, we are going to study the more singular term $\mathcal{S}_1$. Note that the derivatives of integrand involved in $\mathcal{S}_2$ are not integrable. We will integrate by parts before differentiating $\mathcal{S}_1$ with respect to $r$. Hence, we rewrite $\mathcal{S}_1$ as follows
	\begin{align*}
		\mathcal{S}_1(r)(\theta)=&\int_{0}^{2\pi}A(R(\phi))\ln\frac{1}{|T_{R(\phi)e^{i\phi}}(R(\theta)e^{i\theta}-R(\phi)e^{i\phi})|}\mathcal{W}(r)(\theta,\phi)d\phi\\
		&+\partial_\theta\left( R(\theta)e^{i\theta}\right)\cdot\int_{0}^{2\pi}\int_{0}^{R(\phi)}A'(\rho)\ln\frac{1}{|T_{\rho e^{i\phi}}(R(\theta)e^{i\theta}-\rho e^{i\phi})|}\rho e^{i\phi}d\rho d\phi\\
		=:&~~\mathcal{S}_{11}(r)(\theta)+\mathcal{S}_{12}(r)(\theta),
	\end{align*}
	where
	\begin{equation*}
		\mathcal{W}(r)(\theta,\phi)=\partial_{\theta\phi}^2\left(R(\theta)R(\phi)\sin(\theta-\phi)\right).
	\end{equation*}
	Differentiating $\mathcal{S}_{11}$ with respect to $r$, we have
	\begin{align*}
		&d_r\mathcal{S}_{11}(r)[f](\theta)\\
		&=-\int_{0}^{2\pi}A(R(\phi))\frac{T'_{R(\phi) e^{i\phi}}T_{R(\phi) e^{i\phi}}(R(\theta)e^{i\theta}-R(\phi) e^{i\phi})}{|T_{R(\phi) e^{i\phi}}(R(\theta)e^{i\theta}-R(\phi) e^{i\phi})|^2}\left(\frac{f(\theta)}{R(\theta)}-\frac{f(\phi)}{R(\phi)}\right)\mathcal{W}(r)(\theta,\phi)d\phi\\
		&\ \ \ \ +\int_{0}^{2\pi}A(R(\phi))\frac{Q(R(\theta) e^{i\theta},R(\phi) e^{i\phi})T_{R(\phi)e^{i\phi}}(R(\theta)e^{i\theta}-R(\phi) e^{i\phi})}{|T_{R(\phi) e^{i\phi}}(R(\theta)e^{i\theta}-R(\phi) e^{i\phi})|^2}\frac{f(\phi)}{R(\phi)}\mathcal{W}(r)(\theta,\phi)d\phi\\
		&\ \ \ \ +\int_{0}^{2\pi}A(R(\phi))\ln\frac{1}{|T_{R(\phi) e^{i\phi}}(R(\theta)e^{i\theta}-R(\phi) e^{i\phi})|}\partial_{\theta\phi}^2\left(\frac{f(\phi)}{R(\phi)}R(\theta)\sin(\theta-\phi)\right)d\phi\\
		&\ \ \ \ +\partial_\theta\left(\frac{f(\theta)}{R(\theta)}e^{i\theta}\right)\cdot\int_{0}^{2\pi}A(R(\phi))\ln\frac{1}{|T_{R(\phi) e^{i\phi}}(R(\theta)e^{i\theta}-R(\phi) e^{i\phi})|}\partial_\phi\left(iR(\phi)e^{i\phi}\right)d\phi.
	\end{align*}
	Using the Corollary \ref{co1} and Corollary \ref{co2}, we deduce that $d_r\mathcal{S}_{11}[h]\in C^{1/2}(\mathbb{T)}$.
	As for the term $\mathcal{S}_{12}$, direct computation gives
	\begin{align*}
		&d_r\mathcal{S}_{12}(r)[f](\theta)\\
		&=\int_{0}^{2\pi}A'(R(\phi))\ln\frac{1}{|T_{R(\phi) e^{i\phi}}(R(\theta)e^{i\theta}-R(\phi) e^{i\phi})|}e^{i\phi}\cdot\partial_\theta\left(R(\theta)e^{i\theta}\right)f(\phi)d\phi\\
		&\quad+\int_{0}^{2\pi}\int_{0}^{R(\phi)}A'(\rho)\nabla_ x\ln\frac{1}{|T_{\rho e^{i\phi}}( x-\rho e^{i\phi})|}\Big|_{ x=R(\theta)e^{i\theta}}\frac{f(\theta)}{R(\theta)}e^{i\theta}e^{i\phi}\cdot\partial_\theta\left(R(\theta)e^{i\theta}\right)\rho d\rho d\phi\\
		&\quad+\int_{0}^{2\pi}\int_{0}^{R(\phi)}A'(\rho)\ln\frac{1}{|T_{\rho e^{i\phi}}(R(\theta)e^{i\theta}-\rho e^{i\phi})|}e^{i\phi}\cdot\partial_\theta\left(\frac{f(\theta)}{R(\theta)}e^{i\theta}\right)\rho d\rho d\phi\vspace{2em}\\
		&=:\mathcal{S}_{121}(r)[f](\theta)+\mathcal{S}_{122}(r)[f](\theta)+\mathcal{E}(r)[f](\theta).
	\end{align*}
	By Corollary \ref{co1}, we have $\mathcal{S}_{121}\in C^{1/2}(\mathbb{T})$. For $\mathcal{E}$, we have
	\begin{align*}
		\mathcal{E}(r)[f](\theta)&=\partial_\theta\left(\frac{f(\theta)}{R(\theta)}e^{i\theta}\right)\cdot\int_D\ln\frac{1}{|T_ y(R(\theta)e^{i\theta}- y)|}\nabla_ yA(| y|)d y\\
		&=:\partial_\theta\left(\frac{f(\theta)}{R(\theta)}e^{i\theta}\right)\cdot \mathcal{N}(R(\theta)e^{i\theta}).
	\end{align*}
	Since $\nabla A\in L^\infty(D)$, the function $\mathcal{N}$ belongs to $C^1(\mathbb{R}^2)$, which implies $\mathcal{N}(R(\cdot)e^{i(\cdot)})\in C^{1/2}(\mathbb{T})$. As a result, we show that $\mathcal{E}(r)[f]\in C^{1/2}(\mathbb{T})$.

	For the remaining term $\mathcal{S}_{122}$, we have
	\begin{align*}
		\mathcal{S}_{122}(r)[f](\theta)=&-\partial_\theta(R(\theta)e^{i\theta})\cdot\int_D\frac{f(\theta)e^{i\theta}}{R(\theta)}\cdot\nabla_{ y}\ln\frac{1}{|T_ y(R(\theta)e^{i\theta}- y)|}\nabla_ yA(| y|)d y\\
		&+\partial_\theta(R(\theta)e^{i\theta})\cdot\left(\int_D\frac{e^{i\theta}}{R(\theta)}\cdot\frac{Q(R(\theta)e^{i\theta}, y)T_ y(R(\theta)e^{i\theta}- y)}{|T_ y(R(\theta)e^{i\theta}- y)|^2}\nabla_ yA(| y|)d y\right)f(\theta)\\
		=&:\mathcal{S}_{1221}(r)[f](\theta)+\mathcal{S}_{1222}(r)[f](\theta).
	\end{align*}
	By \eqref{eq2-8}, we deduce that $\mathcal{S}_{1222}(r)[f]\in C^{1/2}(\mathbb{T})$. For the other term $\mathcal{S}_{1221}$, integrating by parts, we get
	\begin{align*}
		&\mathcal{S}_{1221}(r)[f](\theta)\\
		&=\left(\partial_\theta(R(\theta)e^{i\theta})\otimes\frac{f(\theta)e^{i\theta}}{R(\theta)}\right):\int_D\ln\frac{1}{|T_ y(R(\theta)e^{i\theta}- y)|}\nabla_ y^2A(| y|)d y\\
		&\quad+\left(\partial_\theta(R(\theta)e^{i\theta})\otimes\frac{f(\theta)e^{i\theta}}{R(\theta)}\right):\int_{0}^{2\pi}\ln\frac{1}{|T_{R(\phi)e^{i\phi}}(R(\theta)e^{i\theta})-R(\phi)e^{\phi})|}\nabla A(R(\phi))\otimes(iR(\phi)e^{i\phi})d\phi.\\
		&=:\mathcal{G}(r)[f](\theta)+\mathcal{I}(r)[f](\theta).
	\end{align*}
	Due to the fact that $\nabla^2 A\in L^\infty(D)$ and using Corollary \ref{co1}, we get $\mathcal{S}_{1221}(r)[f]\in C^{1/2}(\mathbb{T})$. Combining all the computations above, we obtain that $d_r\mathcal{S}(r)[f]\in C^{1/2}(\mathbb{T})$ for all $f\in\mathbb{X}_m$ and $r\in\mathbb{B}_{m,\varepsilon}$ with small $\varepsilon>0$.

	To complete the proof, we need to check that $d_r\mathcal{S}\in C^0(\mathbb{B}_{m,\varepsilon},\mathcal{L}(\mathbb{X}_m,\mathbb{Y}_m))$, i.e.
	\begin{equation*}
		\lim_{\|r_1-r_2\|_{C^{1,1/2}}\to 0}\sup_{\|f\|_{C^{1,1/2}}<1}\|d_r\mathcal{S}(r_1)[f]-d_r\mathcal{S}(r_2)[f]\|_{C^{1/2}}= 0
	\end{equation*}
	To estimate $d_r\mathcal{S}(r_1)[f]-\mathcal{S}(r_2)[f]$, we only need to deal with $\mathcal{E}(r_1)[f]-\mathcal{E}(r_2)[f]$ and $\mathcal{G}(r_1)[f]-\mathcal{G}(r_2)[f]$, since the estimates of the other terms can be concluded directly by Corollary \ref{co1} and Corollary \ref{co2}. Next, we only study $\mathcal{E}(r_1)[f]-\mathcal{E}(r_2)[f]$ since the proof for  $\mathcal{G}(r_1)[f]-\mathcal{G}(r_2)[f]$ is the same. In the following, we write $R(r_1)$ and $R(r_2)$ to emphasize the dependence of $R$ on $r$. 

	We write
	\begin{align*}
		\mathcal{E}(r_1)[f](\theta)-\mathcal{E}(r_2)[f](\theta)=&\partial_\theta\left(\frac{f(\theta)}{R(r_1)(\theta)}e^{i\theta}-\frac{f(\theta)}{R(r_2)(\theta)}e^{i\theta}\right)\cdot\mathcal{N}(R(r_1)e^{i\theta})\\
		&+\partial_\theta\left(\frac{f(\theta)}{R(r_2)}e^{i\theta}\right)\cdot\left(\mathcal{N}(R(r_1)e^{i\theta})-\mathcal{N}(R(r_2)e^{i\theta})\right)\\
		=&:\mathcal{I}[f](\theta)\cdot\mathcal{N}(R(r_1)e^{i\theta})+\partial_\theta\left(\frac{f(\theta)}{R(r_2)}e^{i\theta}\right)\cdot\mathcal{J}(\theta).
	\end{align*}
	Direct computation shows that 
	\begin{equation*}
		\|\mathcal{I}[f](\cdot)\|_{C^{1/2}}\lesssim \|f\|_{C^{1,1/2}}\|r_1-r_2\|_{C^{1,1/2}}.
	\end{equation*}
	For the term $\mathcal{J}$, we write
	\begin{align*}
		\mathcal{J}(\theta)=&\int_{D(r_1)}\left(\ln\frac{1}{|T_ y(R(r_1)(\theta)e^{i\theta}- y)|}-\ln\frac{1}{|T_ y(R(r_2)(\theta)e^{i\theta}- y)|}\right)\nabla_ yA(| y|)d y\\
		&+\left(\int_{D(r_1)}-\int_{D(r_2)}\right)\ln\frac{1}{|T_ y(R(r_2)(\theta)e^{i\theta}- y)|}\nabla_ yA(| y|)d y\\
		=&:\mathcal{J}_1(\theta)+\mathcal{J}_2(\theta).
	\end{align*}
	Denoting
	\begin{equation*}
		x_s(\theta):=sR(r_1)(\theta)+(1-s)R(r_2)(\theta),
	\end{equation*}
	then we have
	\begin{align*}
		\mathcal{J}_1(\theta)=&(R(r_1)(\theta)-R(r_2)(\theta))e^{i\theta}\cdot\int_{0}^{1}\nabla_ x\int_{D(r_1)}\ln\frac{1}{|T_ y( x- y)|}\nabla_ yA(|y|)d y\Big|_{x=x_s(\theta)e^{i\theta}}ds\\
		=&:(R(r_1)(\theta)-R(r_2)(\theta))e^{i\theta}\cdot\int_{0}^{1}\nabla E(x_s(\theta)e^{i\theta})ds,
	\end{align*}
	which implies that 
	\begin{equation*}
		\|\mathcal{J}_1(\theta)\|_{C^{1/2}}\lesssim\left(1+\|R(r_1)\|_{C^1}+\|R(r_2)\|_{C^1}\right)\|\nabla E\|_{C^{1/2}(B)}\|r_1-r_2\|_{C^{1,1/2}}.
	\end{equation*}
	In the following, we are going to estimate the term $\|\nabla E\|_{C^{1/2}}$. For any $x_1,x_2\in B$ and $y\in D(r_1)$, we have
	\begin{equation*}
		\left|\frac{x_1-y}{|T_y(x_1-y)|}-\frac{x_2-y}{|T_y(x_2-y)|}\right|\lesssim_{c_1,c_2,\hat{\rho_0}}\frac{|x_1-x_2|}{\min\left\{|x_1-y|,|x_2-y|\right\}^2}
	\end{equation*}
and
\begin{equation*}
		\left|\frac{x_1-y}{|T_y(x_1-y)|}-\frac{x_2-y}{|T_y(x_2-y)|}\right|\lesssim_{c_1,\hat{\rho_0}}\frac{1}{\min\left\{|x_1-y|,|x_2-y|\right\}},
\end{equation*}
where $c_1:=\inf_{y\in B}\|T_y\|$,  $c_2:=\sup_{y\in B}\|T_y\|$ and $\hat{\rho_0}$ is the radius of the ball $B$. Combining these bounds, we get
\begin{align*}
		\left|\frac{x_1-y}{|T_y(x_1-y)|}-\frac{x_2-y}{|T_y(x_2-y)|}\right|&=\left|\frac{x_1-y}{|T_y(x_1-y)|}-\frac{x_2-y}{|T_y(x_2-y)|}\right|^{1/2}\left|\frac{x_1-y}{|T_y(x_1-y)|}-\frac{x_2-y}{|T_y(x_2-y)|}\right|^{1/2}\\
		&\lesssim_{c_1,c_2,\hat{\rho_0}}\frac{|x_1-x_2|^{1/2}}{\min\left\{|x_1-y|,|x_2-y|\right\}^{1+1/2}}.
\end{align*}
Furthermore, we have
\begin{align*}
	\left|\nabla E(x_1)-\nabla E(x_2)\right|&\lesssim_{c_2}\int_{\mathbb{R}^2}\left|\frac{x_1-y}{|T_y(x_1-y)|}-\frac{x_2-y}{|T_y(x_2-y)|}\right||\nabla A(|y|)|1_{D}(y)dy\\
	&\lesssim_{c_1,c_2,\hat{\rho_0}}|x_1-x_2|^{1/2}\int_{\mathbb{R}^2}\frac{|\nabla A(|y|)|1_{D}(y)}{\min\left\{|x_1-y|,|x_2-y||\right\}^{1+1/2}}dy\\
	&\lesssim_{c_1,c_2,\hat{\rho_0}}|x_1-x_2|^{1/2}\left(\int_B\frac{\nabla_yA(|y|)}{|x_1-y|^{1+1/2}}dy+\int_B\frac{\nabla_yA(|y|)}{|x_2-y|^{1+1/2}}dy\right)\\
	&\lesssim_{c_1,c_2,\hat{\rho_0}}|x_1-x_2|^{1/2}.
\end{align*}
	Hence $\|\nabla E\|_{C^{1/2}(B)}$ can be controlled by a constant independent of $r_1$. Furthermore, we get
	\begin{equation*}
		\|\mathcal{J}_1(\theta)\|_{C^{1/2}}\lesssim\|r_1-r_2\|_{C^{1,1/2}}.
	\end{equation*}
	As for the term $\mathcal{J}_2$, using change of variables in polar coordinates, we obtain
	\begin{align*}
		\mathcal{J}_2(\theta)=&\int_{0}^{2\pi}\int_{0}^{1}\Big(\ln\frac{1}{|T_{sR(r_1)(\phi)e^{i\phi}}(R(r_2)(\theta)e^{i\theta}-sR(r_1)(\phi)e^{i\phi})|}\\
		&-\ln\frac{1}{|T_{sR(r_1)(\phi)e^{i\phi}}(R(r_2)(\theta)e^{i\theta}-sR(r_2)(\phi)e^{i\phi})|}\Big)P_1(s,\phi)dsd\phi\\
		&+\int_{0}^{2\pi}\int_{0}^{1}\ln\frac{1}{|T_{sR(r_1)(\phi)e^{i\phi}}(R(r_2)(\theta)e^{i\theta}-sR(r_2)(\phi)e^{i\phi})|}(P_1-P_2)(s,\phi)dsd\phi,
	\end{align*}
	where for $k=1,2$, 
	\begin{equation*}
		P_k(s,\phi)=sR^2(r_k)(\phi)\nabla A(sR(r_k)\phi).
	\end{equation*}
	Thus, by the virtue of Corollary \ref{co1} and Corollary \ref{co2}, for $\beta<1/2$, we get
	\begin{equation*}
		\|\mathcal{J}_2\|_{C^{1/2}}\lesssim\|r_1-r_2\|_{C^1}^\beta\|P_2\|_{L^\infty([0,1]\times\mathbb{R}^2)}+\|r_1-r_2\|_{L^\infty(\mathbb{T})}.
	\end{equation*}
	Now, combining all the preceding bounds, we deduce that
	\begin{equation*}
		\lim_{\|r_1-r_2\|_{C^{1,1/2}}\to 0}\sup_{\|f\|_{C^{1,1/2}}<1}\|\mathcal{E}(r_1)[f]-\mathcal{E}(r_2)[f]\|_{C^{1/2}}= 0,
	\end{equation*}
	which completes the proof.
\end{proof}

	\subsection{Kernel of $d_r\mathcal{F}(\Omega,0)$ and Spectral analysis}
	
	First we determine the kernel of $d_r\mathcal{F}(\Omega,0)$. To do this we need to get formula for $d_r\mathcal{F}$.
\begin{lemma}\label{lem2.2}
	For $f\in\mathbb{X}_m$ taking the form
	\begin{equation*}
		f(\theta)=\sum_{n=1}^{\infty}f_n\cos(nm\theta),
	\end{equation*}
	it holds that
	\begin{equation*}
		d_r\mathcal{F}(\Omega,0)[f](\theta)=-\sum_{n=1}^{\infty}nm\left(\Omega-\frac{h^2+a^2}{2h^2}-\frac{a^2}{h^2}I'_{nm}\left(\frac{nma}{h}\right)K'_{nm}\left(\frac{nma}{h}\right)\right)f_n\sin(nm\theta).
	\end{equation*}
\end{lemma}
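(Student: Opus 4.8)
The plan is to compute the G\^ateaux derivative $d_r\mathcal{F}(\Omega,0)[f]=\frac{d}{ds}\big|_{s=0}\mathcal{F}(\Omega,sf)$, which exists and coincides with the Fr\'echet derivative by Proposition \ref{prop3-2}, by differentiating \eqref{eq2-17} under the integral sign. Along the segment $r=sf$ one has $R(\theta)=\sqrt{a^2+2sf(\theta)}$, so $R\big|_{s=0}\equiv a$ and $\frac{d}{ds}\big(R(\theta)e^{i\theta}\big)\big|_{s=0}=\frac{f(\theta)}{a}e^{i\theta}$. The linear term $\Omega r'(\theta)$ contributes $\Omega f'(\theta)$; differentiating the double integral produces a boundary contribution from the $r$-dependent upper limit $R(\phi)$ (in which the factor $\rho$ evaluated at $\rho=R(\phi)=a$ cancels the $1/a$ from $\frac{d}{ds}R(\phi)$), namely $\int_0^{2\pi}f(\phi)G_H(ae^{i\theta},ae^{i\phi})\,d\phi$, and an interior contribution $\frac{f(\theta)}{a}e^{i\theta}\!\cdot\nabla_x\big(\int_0^{2\pi}\!\int_0^a G_H(x,\rho e^{i\phi})\rho\,d\rho\,d\phi\big)\big|_{x=ae^{i\theta}}$ from the $r$-dependence of the evaluation point. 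Recognizing the area integral here as the trivial stream function $\psi_0:=\mathcal{G}_H\mathbf{1}_{B_a}$ evaluated at $x$, I arrive at
\begin{equation*}
	d_r\mathcal{F}(\Omega,0)[f](\theta)=\Omega f'(\theta)+\partial_\theta\!\left(\int_0^{2\pi}f(\phi)\,G_H\big(ae^{i\theta},ae^{i\phi}\big)\,d\phi+\frac{f(\theta)}{a}\,e^{i\theta}\!\cdot\nabla\psi_0\big(ae^{i\theta}\big)\right).
\end{equation*}

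Next I would make the two inner terms explicit. Since $\mathcal{L}_H$ --- hence $\mathcal{G}_H$ --- commutes with rotations about the origin and $\mathbf{1}_{B_a}$ is radial, $\psi_0(x)=\Psi_0(|x|)$, and by \eqref{eq2.2} the equation $\mathcal{L}_H\psi_0=\mathbf{1}_{B_a}$ becomes $-\frac{1}{\rho}\partial_\rho\big(\tfrac{\rho}{1+\kappa^2\rho^2}\Psi_0'\big)=\mathbf{1}_{\{\rho<a\}}$; integrating once gives $\frac{\rho}{1+\kappa^2\rho^2}\Psi_0'(\rho)=-\tfrac{\min(\rho,a)^2}{2}$, so $\Psi_0'(a)=-\tfrac{a(h^2+a^2)}{2h^2}$ (using $\kappa=1/h$), whence $\frac{f(\theta)}{a}e^{i\theta}\!\cdot\nabla\psi_0(ae^{i\theta})=\frac{f(\theta)}{a}\Psi_0'(a)=-\tfrac{h^2+a^2}{2h^2}f(\theta)$. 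For the boundary term I insert the Fourier expansion \eqref{Fourier} on the diagonal circle, $G_H(ae^{i\theta},ae^{i\phi})=-\sum_{k\in\mathbb{Z}}G_k(a,a)e^{ik(\theta-\phi)}$ with $G_k(a,a)=\frac{a^2}{2\pi h^2}I'_k(ka/h)K'_k(ka/h)$ from \eqref{2.5}, expand $f(\phi)=\sum_{n\geq 1}f_n\cos(nm\phi)$, and use orthogonality of $\{e^{ik\phi}\}_{k\in\mathbb{Z}}$ together with $G_{-k}(a,a)=G_k(a,a)$ (the Green function is real, and $I_{-k}=I_k$, $K_{-k}=K_k$); this collapses the sum to $\int_0^{2\pi}f(\phi)G_H(ae^{i\theta},ae^{i\phi})\,d\phi=-\frac{a^2}{h^2}\sum_{n\geq 1}f_n I'_{nm}(nma/h)K'_{nm}(nma/h)\cos(nm\theta)$. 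Substituting the two expressions, applying $\partial_\theta$ (which sends $\cos(nm\theta)\mapsto-nm\sin(nm\theta)$), and collecting the coefficient of each $\sin(nm\theta)$ yields exactly the stated formula.

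The one genuinely delicate point is the justification of differentiating the area integral term by term, since $G_H(x,y)$ is singular on the diagonal; in particular the boundary integrand $f(\phi)G_H(ae^{i\theta},ae^{i\phi})$ blows up as $\phi\to\theta$. Because this singularity is only logarithmic --- by \eqref{2-15}, applied as in Proposition \ref{prop3-2} after subtracting the Green function of a large ball --- the restriction of $G_H$ to the circle $\{|x|=a\}$ is integrable, the $\phi$-integral converges, and one may pass to the limit in $\tfrac1s\big(\mathcal{F}(\Omega,sf)-\mathcal{F}(\Omega,0)\big)$ by dominated convergence after excising a shrinking neighborhood $\{|\phi-\theta|<\delta\}$ whose contribution is $O(\delta\log\tfrac1\delta)$; the $C^1$ regularity from Proposition \ref{prop3-2} then identifies this pointwise limit with $d_r\mathcal{F}(\Omega,0)[f]$. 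Alternatively --- and this is probably cleaner --- one may, as in the proof of Proposition \ref{prop3-2}, integrate by parts in $\rho$ (or split $G_H=H+G_{H,B}$) to move the $\theta$-derivative off the singular kernel before differentiating in $r$, bypassing the issue entirely. Once the two inner terms are identified, the remaining computation is routine.
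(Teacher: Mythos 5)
Your proposal is correct and follows essentially the same route as the paper: the derivative splits into the boundary term from the moving upper limit $R(\phi)$ (evaluated via the Fourier expansion \eqref{Fourier}--\eqref{2.5} of $G_H$ on the circle $|x|=a$) and the interior term from the moving evaluation point (evaluated by solving the radial ODE for the stream function of $\mathbf{1}_{B_a}$), exactly as in the paper's computation of $P(0)[f]$ and $Q(0)$. Your added justification of differentiation under the integral sign is a welcome extra that the paper glosses over.
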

\begin{proof}
Let $R_{t,f}=\sqrt{a^2+2(r+tf)}$. Then the derivative of $\mathcal{F}$ with respect to $r$ can be computed as follows
\begin{equation}\label{eq2.7}
	d_r\mathcal{F}(\Omega,r)[f](\theta)=\partial_\theta\left(\int_{0}^{2\pi}\frac{d}{dt}\int_{0}^{R_{t,f}(\phi)}G_H(R_{t,f}(\theta)e^{i\theta},\rho e^{i\phi})\rho d\rho\bigg|_{t=0}d\phi\right)+\Omega f'(\theta).
\end{equation}
Direct computation gives 
\begin{equation*}
	\begin{split}
		&\frac{d}{dt}\int_{0}^{2\pi}\int_{0}^{R_{t,f}(\phi)}G_H(R_{t,f}(\theta)e^{i\theta},\rho e^{i\phi})\rho d\rho d\phi\bigg|_{t=0}\\
		&=\int_{0}^{2\pi} G_H(R_{t,f}(\theta)e^{i\theta},R_{t,f}(\phi)e^{i\phi})f(\phi)  d\phi\bigg|_{t=0} \\
		&\quad+\int_{0}^{2\pi}\int_{0}^{R_{t,f}(\phi)}\nabla G_H(R_{t,f}(\theta)e^{i\theta},\rho e^{i\phi})\cdot\frac{f(\theta)e^{i\theta}}{R_{t,f}(\theta)}\rho d\phi\bigg|_{t=0}\\
		&=\int_{0}^{2\pi} G_H(R(\theta)e^{i\theta},R(\phi)e^{i\phi})f(\phi)  d\phi +\int_{0}^{2\pi}\int_{0}^{R(\phi)}\nabla G_H(R(\theta)e^{i\theta},\rho e^{i\phi})\cdot\frac{f(\theta)e^{i\theta}}{R(\theta)}\rho d\phi.
	\end{split}
\end{equation*}
If we define
\begin{equation*}
	Q(r)(\theta)=\frac{e^{i\theta}}{R(\theta)}\cdot\int_{0}^{2\pi}\int_{0}^{R(\phi)}\nabla G_H(R(\theta)e^{i\theta},\rho e^{i\phi})\rho \,d\rho\,d\phi
\end{equation*}
and
\begin{equation*}
	P(r)[f](\theta)=\int_{0}^{2\pi}G_H(R(\theta)e^{i\theta},R(\phi)e^{i\phi})f(\phi)d\phi,
\end{equation*}
then we have
\begin{equation}\label{eq2.8}
	d_r\mathcal{F}(\Omega,r)[f](\theta)=\partial_\theta\left(Q(r)(\theta)f(\theta)+P(r)[f](\theta)\right)+\Omega f'(\theta).
\end{equation}
For $f(\theta)=\sum_{n=1}^{\infty}f_n\cos(nm\theta)$, we compute the term $P(0)[f]$ firstly, and get
\begin{equation}\label{eq2.9}
	\begin{split}
	P(0)[f](\theta)&=\sum_{n=1}^{\infty}f_n\int_{0}^{2\pi}G_H(ae^{i\theta},ae^{i\phi})\cos(nm\phi)d\phi\\
	&=\sum_{n=1}^{\infty}f_n\int_{0}^{2\pi}G_H(a,ae^{i(\phi-\theta)})\cos(nm\phi)d\phi\\
	&=\sum_{n=1}^{\infty}f_n\int_{0}^{2\pi}G_H(a,ae^{i\phi})\cos(nm(\phi+\theta))d\phi\\
	&=\sum_{n=1}^{\infty}\left(f_n\int_{0}^{2\pi}G_H(a,ae^{i\phi})\cos(nm\phi)d\phi \right)\cos(nm\theta).
	\end{split}
\end{equation}
Using the Green expansion developed in section \ref{sec2}, the Fourier coefficient above can be expressed in more precise form, namely
\begin{equation}\label{eq2.10}
	\begin{split}
		\int_{0}^{2\pi}G_H(a,ae^{i\phi})\cos(nm\phi)d\phi&=-\sum_{k\in\mathbb{Z}}\frac{a^2I'_{k}\left(\frac{ka}{h}\right)K'_{k}\left(\frac{ka}{h}\right)}{2\pi h^2}\int_{0}^{2\pi}\cos(nm\phi)e^{-ik\phi}d\phi\\
		&=-\frac{a^2I'_{nm}\left(\frac{nma}{h}\right)K'_{nm}\left(\frac{nma}{h}\right)}{\pi h^2}\int_{0}^{2\pi}\cos^2(nm\phi)d\phi\\
		&=-\frac{a^2I'_{nm}\left(\frac{nma}{h}\right)K'_{nm}\left(\frac{nma}{h}\right)}{ h^2},
	\end{split}
\end{equation}
and consequently \eqref{eq2.9} and \eqref{eq2.10} lead to
\begin{equation}\label{2.11}
	P(0)[f](\theta)=-\sum_{n=1}^{\infty}\frac{a^2I'_{nm}\left(\frac{nma}{h}\right)K'_{nm}\left(\frac{nma}{h}\right)}{ h^2}f_n\cos(nm\theta).
\end{equation}

Next, we compute the other term $Q(0)(\theta)$. Direct computation shows that
\begin{equation}\label{eq2.12}
	\begin{split}
	Q(0)(\theta)&=\frac{1}{a}\int_{0}^{2\pi}\int_{0}^{a}\nabla_xG_H(ae^{i\theta},\rho e^{i\phi})\cdot e^{i\theta}\rho d\rho d\phi\\
	&=\frac{1}{a}\partial_{x_1}\left(\int_{\mathbb{R}^2}G_H(x,y)1_{B_a(0)}(y)dy\right)\bigg|_{x=(a,0)}\\
	&=:\frac{1}{a}\partial_{x_1}\Psi_1(a,0).
	\end{split}
\end{equation}
It is obvious that $\Psi_1$ is radial and satisfies the following ordinary differential equation
\begin{equation*}
	-\frac{1}{\rho}\partial_\rho\left(\frac{\rho}{1+\rho^2/h^2}\partial_\rho\Psi_1\right)= 1_{\{\rho<a\}}
\end{equation*}
which can be solved by
\begin{equation*}
	\Psi_1(\rho)=-\int_{0}^{\rho}\frac{h^2+s^2}{h^2s}\int_{0}^{\min\left\{s,a\right\}}\tau d\tau ds+\text{constant},
\end{equation*}
and gives
\begin{equation}\label{eq2.13}
	\partial_{x_1}\Psi_1(a,0)=-\frac{h^2+a^2}{2h^2}a.
\end{equation}
Combining \eqref{eq2.12} and \eqref{eq2.13}, we get
\begin{equation}\label{eq2.14}
	Q(0)(\theta)=-\frac{a^2+h^2}{2h^2}.
\end{equation}
Hence, \eqref{eq2.8}, \eqref{2.11} and \eqref{eq2.14} give the conclusion in this Lemma, the proof is thus completed.
\end{proof}

The next step is to investigate the monotonicity of the eigenvalues to ensure that the kernel space is one-dimensional. Essentially, we only need to study the monotonicity of the mapping $\nu \mapsto I_\nu'(\nu z) K_\nu'(\nu z)$. 
\begin{proposition}\label{prop3-6}
	There exist positive numbers $A_1$, $A_2$ and $M(z)$, such that $I_\nu'(\nu z)K_\nu'(\nu z)$ is strictly monotone increasing function of $\nu$, if one of the following three cases holds:
	\begin{itemize}
		\item[(1)] $\nu\geq 2$ and $0<z<A_1$,
		\item[(2)] $\nu\geq 2$ and $z>A_2$,
		\item[(3)] $\nu>M(z)$ for any fixed $z>0$.
	\end{itemize}
\end{proposition}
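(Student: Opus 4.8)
Fix $z>0$ and set $P(\nu):=I'_\nu(\nu z)K'_\nu(\nu z)$. Since $I'_\nu>0$ and $K'_\nu<0$ on $(0,\infty)$ we have $P(\nu)<0$, so proving that $P$ is strictly increasing in $\nu$ is the same as proving that $|P|$ is strictly decreasing. The starting point is the exact identity, in the notation of Lemma~\ref{lemma3.4},
\[
P(\nu)=-\frac{C\bigl(I_\nu(\nu z)\bigr)\,C\bigl(K_\nu(\nu z)\bigr)}{\nu^2 z^2}\,I_\nu(\nu z)K_\nu(\nu z),
\]
which exhibits $|P(\nu)|$ as a product of precisely the three quantities $C(I_\nu(\nu z))$, $C(K_\nu(\nu z))$ and $I_\nu(\nu z)K_\nu(\nu z)$ for which Lemmas~\ref{lemma3.4} and \ref{lemma3.5} provide sharp two-sided bounds. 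These bounds alone will not give monotonicity --- they only trap $|P(\nu)|$ between two functions with an $O(1)$ relative gap at moderate $\nu$ --- so I would differentiate $P$ and estimate $P'(\nu)$ instead.

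\textbf{An exact formula for $P'(\nu)$.} Differentiating the product, using the modified Bessel equation to eliminate the second derivatives (at $x=\nu z$ one has $zI''_\nu(\nu z)=\tfrac{1+z^2}{z}I_\nu(\nu z)-\tfrac1\nu I'_\nu(\nu z)$, and likewise for $K_\nu$), and then the Wronskian $I_\nu K'_\nu-I'_\nu K_\nu=-\tfrac1{\nu z}$, one is led to
\[
P'(\nu)=\frac{1+z^2}{z}\,\partial_x\!\bigl[I_\nu(x)K_\nu(x)\bigr]\big|_{x=\nu z}-\frac2\nu\,P(\nu)+\partial_\nu\!\bigl[I'_\nu(x)K'_\nu(x)\bigr]\big|_{x=\nu z}.
\]
In this decomposition the middle term $-\tfrac2\nu P(\nu)=\tfrac2\nu|P(\nu)|$ is positive and is the engine of the monotonicity; the first term is negative because $x\mapsto I_\nu(x)K_\nu(x)$ is decreasing, and in the last term the piece $I'_\nu\,\partial_\nu K'_\nu$ is also negative. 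The goal then becomes to show that $\tfrac2\nu|P(\nu)|$ beats the other two terms.

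\textbf{Reduction to a scalar inequality and the three regimes.} Via the Wronskian one rewrites $\partial_x[I_\nu K_\nu](\nu z)=-\tfrac{2C(K_\nu(\nu z))}{\nu z}I_\nu(\nu z)K_\nu(\nu z)+\tfrac1{\nu z}$, putting the first term in terms of the ``ingredient'' quantities. For the last term I would use that $\partial_\nu I_\nu$ and $\partial_\nu K_\nu$ solve the inhomogeneous Bessel equation with right-hand sides $2\nu I_\nu$ and $2\nu K_\nu$; variation of parameters then expresses $\partial_\nu[I'_\nu K'_\nu](\nu z)$ as integrals of products of $I_\nu$ and $K_\nu$, which I would bound again in terms of $I_\nu(\nu z)K_\nu(\nu z)$ and the quotients $C(I_\nu(\nu z)),C(K_\nu(\nu z))$. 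Substituting the two-sided estimates of Lemmas~\ref{lemma3.4}--\ref{lemma3.5} for all three ingredients then reduces $P'(\nu)>0$ to the statement that an explicit algebraic function $\Psi(\nu,z)$ of $\nu$ and $z$ alone is positive. Finally I would verify $\Psi(\nu,z)>0$ case by case: for $0<z<A_1$ and for $z>A_2$ the smallness, resp.\ largeness, of $z$ makes the sandwich sharp enough to conclude for every $\nu\geq2$; and for fixed $z$ the same bounds become asymptotically tight as $\nu\to+\infty$, yielding $\Psi(\nu,z)>0$ for $\nu>M(z)$, with $M(z)\to+\infty$ as $z\to0^+$ or $z\to+\infty$ --- which is exactly why the three cases cannot be combined. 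The same computation also produces the quantitative decay of $|P(\nu)|$ announced in the introduction.

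\textbf{The main obstacle.} The hard part is a genuine near-cancellation in the formula for $P'(\nu)$. Asymptotically $|P(\nu)|\sim\tfrac{\sqrt{1+z^2}}{2\nu z^2}$, so $\tfrac2\nu|P(\nu)|\sim\tfrac{\sqrt{1+z^2}}{\nu^2z^2}$ while $P'(\nu)\sim\tfrac{\sqrt{1+z^2}}{2\nu^2z^2}$; thus the positive engine term exceeds the sum of the two negative terms only by about a factor of two. Crude estimates of the $\partial_x$- and $\partial_\nu$-terms are therefore hopeless: one must use the nearly optimal bounds of Lemmas~\ref{lemma3.4}--\ref{lemma3.5} together with a careful separation of the leading $\nu^{-2}$ behaviour of each term from its remainder, and it is precisely the smallness/largeness of $z$ (cases (1) and (2)) or the largeness of $\nu$ (case (3)) that is used to control that remainder.
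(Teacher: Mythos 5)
Your proposal diverges from the paper at the very first fork, and the branch you choose is left with a genuine gap at its hardest point. You correctly derive the identity
\begin{equation*}
P'(\nu)=\frac{1+z^2}{z}\,\partial_x\bigl[I_\nu(x)K_\nu(x)\bigr]\Big|_{x=\nu z}-\frac{2}{\nu}P(\nu)+\partial_\nu\bigl[I'_\nu(x)K'_\nu(x)\bigr]\Big|_{x=\nu z},
\end{equation*}
and you correctly observe that all three terms are of size $\nu^{-2}$ with the positive term exceeding the sum of the negative ones only by a factor of about two. But the third term is never actually estimated: you propose to represent $\partial_\nu I_\nu$ and $\partial_\nu K_\nu$ by variation of parameters and to ``bound the resulting integrals again in terms of $I_\nu K_\nu$ and the quotients.'' Lemmas \ref{lemma3.4} and \ref{lemma3.5} are pointwise bounds at a single argument; it is not at all clear that they control integrals such as $\int_0^{\nu z}t^{-1}I_\nu(t)^2\,dt$ and $\int_{\nu z}^{\infty}t^{-1}K_\nu(t)^2\,dt$ to leading order with the relative accuracy (strictly better than a factor of $2$) that your own cancellation analysis shows is required. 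Since this is exactly the step you flag as ``the main obstacle,'' the proposal is a plan rather than a proof until that estimate is produced.

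You also dismiss the workable elementary route prematurely. Your claim that the sandwich from Lemmas \ref{lemma3.4}--\ref{lemma3.5} ``will not give monotonicity'' overlooks that one need not detect the variation of $|P|$ over infinitesimal increments of $\nu$: for the bifurcation argument only integer steps matter, and over a unit step $\nu\to\nu-1$ the relative change of $|P|$ is of the same order $O(1/\nu)$ as the relative width of the sandwich, with a favorable constant. This is precisely what the paper does: it compares the upper bound for $-I'_\nu(\nu z)K'_\nu(\nu z)$ with the lower bound for $-I'_{\nu-1}((\nu-1)z)K'_{\nu-1}((\nu-1)z)$, reduces the comparison to the positivity of the explicit algebraic function $f_z(\nu)$ in \eqref{f}, and verifies $f_z>0$ in the three regimes by the rescaling $s=1/\nu$ and elementary limits. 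That route needs no order-derivatives of Bessel functions at all. If you wish to salvage your differential approach, you must supply a quantitative two-sided bound on $\partial_\nu\bigl[I'_\nu(x)K'_\nu(x)\bigr]$ at $x=\nu z$ with explicit error control; otherwise the discrete comparison is both simpler and sufficient.
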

\begin{proof}
Recall the notations in Lemma \ref{lemma3.4}. We can rewrite $I'_\nu(z)K_\nu'(z)$ as follows
\begin{equation}
	I'_\nu(z)K_\nu'(z)=-\frac{1}{z^2}I_\nu(z)K_\nu(z)C(I_\nu(z))C(K_\nu(z)).
\end{equation}
Then according to the inequalities in Lemma \ref{lemma3.4} and Lemma \ref{lemma3.5}, it follows that
\begin{equation}\label{Bessel1}
	\begin{split}
		&\frac{1}{\nu^2 z^2}\frac{\left(\sqrt{(\nu+\frac{1}{2})^2+\nu^2 z^2}-\frac{1}{2}\right)\left(\sqrt{(\nu-1)^2+\nu^2 z^2}+1\right)}{2\sqrt{(\nu-\frac{1}{2})^2+\nu^2 z^2}}\\
		\geq& -I_\nu'(\nu z)K_\nu'(\nu z)\\
		\geq& \frac{1}{\nu^2 z^2}\frac{\left(\sqrt{(\nu-\frac{1}{2})^2+z^2\nu^2}+\frac{1}{2}\right)\left(\sqrt{(\nu+1)^2+z^2\nu^2}-1\right)}{1+\sqrt{\nu^2+z^2\nu^2}+\sqrt{(\nu-1)^2+z^2\nu^2}}
	\end{split}
\end{equation}  
and  
\begin{equation}\label{Bessel2}
	\begin{split}  
		&\frac{1}{(\nu-1)^2z^2}\frac{\left(\sqrt{(\nu-\frac{1}{2})^2+(\nu-1)^2z^2}-\frac{1}{2}\right)\left(\sqrt{(\nu-2)^2+(\nu-1)^2z^2}+1\right)}{2\sqrt{(\nu-\frac{3}{2})^2+(\nu-1)^2z^2}}\\      
		\geq& -I_{\nu-1}'((\nu-1)z)K_{\nu-1}'((\nu-1)z)\\
		\geq& \frac{1}{(\nu-1)^2z^2}\frac{\left(\sqrt{\nu^2+(\nu-1)^2z^2}-1\right)\left(\sqrt{(\nu-\frac{3}{2})^2+(\nu-1)^2z^2}+\frac{1}{2}\right)}{1+\sqrt{(\nu-1)^2+(\nu-1)^2z^2}+\sqrt{(\nu-2)^2+(\nu-1)^2z^2}}.
	\end{split}
\end{equation}
 
We claim that $I_\nu'(\nu z)K_\nu'(\nu z)$ is a monotonically increasing function of $\nu$ for fixed $z$. In fact, let us first define 
\begin{equation}\label{f}
	\begin{split}
		f_z(\nu)=&2\left(\sqrt{(\nu-\frac{1}{2})^2+\nu^2 z^2}\right)\left(\sqrt{\nu^2+(\nu-1)^2z^2}-1\right)\left(\sqrt{(\nu-\frac{3}{2})^2+(\nu-1)^2z^2}+\frac{1}{2}\right)\\
		&-\frac{(\nu-1)^2}{\nu^2}\left(1+\sqrt{(\nu-1)^2+(\nu-1)^2z^2}+\sqrt{(\nu-2)^2+(\nu-1)^2z^2}\right)\\
		&\quad\times\left(\sqrt{(\nu+\frac{1}{2})^2+\nu^2 z^2}-\frac{1}{2}\right)\left(\sqrt{(\nu-1)^2+\nu^2 z^2}+1\right).
	\end{split}
\end{equation}
Then in order to prove our claim,  it is sufficient to show that $f_z(\nu)>0$  due to the inequalities \eqref{Bessel1} and \eqref{Bessel2}. We will demonstrate that the claim holds true for the following three cases.
\begin{itemize}
	\item   [\textbf{Case 1}.] For $z>0$ sufficiently small and any $\nu\geq 3$.
\end{itemize}
Denote $s=\frac{1}{\nu}\in(0,\frac{1}{3}]$ and define
\begin{equation*}
	\begin{split}
		h_z(s):=&\frac{f_z(\nu)}{\nu^3}\\
		=&2\sqrt{(1-\frac{1}{2}s)^2+z^2}\left(\sqrt{1+(1-s)^2z^2}-s\right)\left(\sqrt{(1-\frac{3}{2}s)^2+(1-s)^2z^2}+\frac{s}{2}\right)\\
		&-(1-s)^2\left(s+(1-s)\sqrt{1+z^2}+\sqrt{(1-2s)^2+(1-s)^2z^2}\right)\\
		&\quad\times\left(\sqrt{(1+\frac{s}{2})^2+z^2}-\frac{s}{2}\right)\left(\sqrt{(1-s)^2+z^2}+s\right).
	\end{split}
\end{equation*}
By direct computation, we have
\begin{gather*}
	\lim_{s\to0^+}h_z(s)=0,\\
	\lim_{s\to0^+}\frac{dh_z(s)}{ds}=2\sqrt{1+z^2}\left(z^2-\frac{3}{2}\sqrt{1+z^2}+2\right)\geq \frac{7}{8}>0,\quad {\rm for\ all}\  z\geq 0,\\
	\lim_{z\to0^+}\frac{dh_z(s)}{dz}=0,\quad {\rm for\ all}\  s\in(0,\frac{1}{3}].
\end{gather*}
Therefore, there exists $\delta_1 >0$ such that 
\begin{equation*}
	h_z(s)>0\quad {\rm for\ any}\ (s,z)\in (0,\delta_1)\times(0,\delta_1).
\end{equation*}
On the other hand, we obtain
\begin{equation*}
	\begin{split}
		\lim_{z\to 0^+}h_z(s)=2\left(1-\frac{s}{2}\right)\left(1-s\right)^2-2(1-s)^3=s(1-s)^2,
	\end{split}
\end{equation*}
which implies that there exists $\delta_2 >0$, such that 
\begin{equation*}
	h_z(s)>\frac{4}{9}\delta_1>0 \quad {\rm for\ any}\ (s,z)\in [\delta_1,\frac{1}{3}]\times(0,\delta_2).
\end{equation*}
Then by taking $A_1=\min\{\delta_1,\delta_2\}$, we obtain that $I'_{\nu}(\nu z)K'_{\nu}(\nu z)$ is strictly monotone increasing about $\nu\geq 2$ for all $z\in(0,A_1)$.

\begin{itemize}
	\item  [\textbf{Case 2}.] For $z$ sufficiently large and any $\nu\geq 3$. 
\end{itemize}
Similarly, we denote
\begin{equation*}
	\begin{split}
		g_z(s):=&\frac{f_z(\nu)}{z^3\nu^3}\\
		=&2\sqrt{\frac{1}{z^2}(1-\frac{1}{2}s)^2+1}\left(\sqrt{\frac{1}{z^2}+(1-s)^2}-\frac{s}{z}\right)\left(\sqrt{\frac{1}{z^2}(1-\frac{3}{2}s)^2+(1-s)^2}+\frac{s}{2z}\right)\\
		&-(1-s)^2\left(\frac{s}{z}+(1-s)\sqrt{1+\frac{1}{z^2}}+\sqrt{\frac{1}{z^2}(1-2s)^2+(1-s)^2}\right)\\
		&\quad\times\left(\sqrt{\frac{1}{z^2}(1+\frac{s}{2})^2+1}-\frac{s}{2z}\right)\left(\sqrt{\frac{1}{z^2}(1-s)^2+1}+\frac{s}{z}\right).
	\end{split}
\end{equation*}
By direct computation, we have
\begin{gather*}
	\lim_{s\to0^+}g_z(s)=0,\\
	\lim_{s\to0^+}\frac{dg_z(s)}{ds}=2\sqrt{1+\frac{1}{z^2}}\left(\frac{2}{z^2}-\frac{3}{2z}\sqrt{\frac{1}{z^2}+1}+1\right)\geq 2\left(\sqrt{2}-\frac{3}{4}\right)^2>0,\ {\rm for\ all}\  z\geq1,\\
	\lim_{z\to+\infty}\frac{dg_z(s)}{dz}=0,\  {\rm for\ all}\  s\in(0,\frac{1}{3}].
\end{gather*}
On the other hand, 
\begin{equation*}
	\begin{split}
		\lim_{z\to +\infty}g_z(s)=2\left(1-s\right)^2-2(1-s)^3=2s(1-s)^2.
	\end{split}
\end{equation*}
Then as in the Case 1, there exists a sufficiently large constant $A_2>0$, such that $I'_{\nu}(\nu z)K'_{\nu}(\nu z)$ is strictly monotone increasing about $\nu\geq 2$ for all $z\in(A_2,+\infty)$.

\begin{itemize}
	\item  [\textbf{Case 3}.] For any fixed $z=z_0>0$ and $\nu$ sufficiently large.
\end{itemize} 
The proof is straightforward. Due to the computation in Case 1, we have 
\begin{equation*}
	\lim_{s\to 0^+}h_{z_0}(s)=0,\quad \lim_{s\to 0^+}\frac{d}{ds}h_{z_0}(s)>0,
\end{equation*}
which implies that there exists $\delta_{z_0}>0$, such that $h_{z_0}(s)>0$ for all $s\in(0,\delta_{z_0})$. Therefore by taking $M(z_0)=[\frac{1}{\delta_{z_0}}]+1$, the proof is completed.
\end{proof}
	\subsection{Proof of Theorem \ref{thm-simply}}\label{sc4}
		With the preceding results and those from the previous section, we are now in a position to establish all the prerequisites for applying Crandall-Rabinowitz Theorem, which will subsequently allow us to deduce Theorem \ref{thm-simply}. This is formalized in the following proposition.
	\begin{proposition}\label{prop3-7}
		Let $A_1$, $A_2$ and $M(z)$ be as in Proposition \ref{prop3-6}. Assume $m>M(a,h):=M(a/h)$ or $h\in(0,H_0(a))\cup(H_1(a),+\infty)$ with $H_0(a):=a/A_2$ and $H_1(a):=a/A_1$, where $M$, $A_1$ and $A_2$ are obtained in Proposition \ref{prop3-6},  then the following assertions hold :
		\begin{itemize}
			\item[(1)]The linearized operator
				\begin{equation*}
					d_r\mathcal{F}(\Omega,0):\mathbb{X}_m\to\mathbb{Y}_m
				\end{equation*}
				has a nontrivial kernel if and only if
				\begin{equation*}
					\Omega\in\left\{\Omega_{nm}:=\frac{a^2+h^2}{2h^2}+\frac{a^2}{2h^2}I'_{nm}\left(\frac{nma}{h}\right)K'_{nm}\left(\frac{nma}{h}\right),~n\in\mathbb{N}\right\}.
				\end{equation*}
			\item[(2)]Moreover,   $\ker\left(d_r \mathcal{F}(\Omega_{ m},0)\right)\subset \mathbb{X}_m$ is a one-dimensional linear space generated by
				\begin{equation*}
					\theta\to\cos(m\theta).
				\end{equation*}
			\item[(3)]Furthermore, the range of $d_r\mathcal{F}(\Omega_{m},0)$ is closed and of co-dimension one.
			\item[(4)] The transversality condition holds, namely
				\begin{equation*}
					\partial_\Omega d_r\mathcal{F}(\Omega_{m},0)[\cos(m\cdot)]\notin {\rm Range}\left(d_r\mathcal{F}(\Omega_{m},0)\right).
				\end{equation*}
		\end{itemize}
	\end{proposition}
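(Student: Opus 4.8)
The plan is to verify the four hypotheses of the Crandall--Rabinowitz theorem for the functional $\mathcal{F}$ at the bifurcation point $(\Omega_m, 0)$, where $\Omega_m := \Omega_{1\cdot m}$ corresponds to the first Fourier mode. The decisive input is Lemma \ref{lem2.2}, which gives the explicit diagonal action of $d_r\mathcal{F}(\Omega,0)$ on the Fourier basis: for $f(\theta)=\sum_{n\ge 1} f_n\cos(nm\theta)$,
\begin{equation*}
	d_r\mathcal{F}(\Omega,0)[f](\theta)=-\sum_{n=1}^{\infty}nm\,\mu_n(\Omega)\,f_n\sin(nm\theta),\qquad \mu_n(\Omega):=\Omega-\frac{h^2+a^2}{2h^2}-\frac{a^2}{h^2}I'_{nm}\!\left(\tfrac{nma}{h}\right)K'_{nm}\!\left(\tfrac{nma}{h}\right).
\end{equation*}
First I would establish assertion (1): the kernel is nontrivial iff $\mu_n(\Omega)=0$ for some $n\in\mathbb{N}$, i.e. iff $\Omega=\Omega_{nm}=\frac{a^2+h^2}{2h^2}+\frac{a^2}{2h^2}I'_{nm}(\tfrac{nma}{h})K'_{nm}(\tfrac{nma}{h})$. (Note the factor $\tfrac{a^2}{2h^2}$ rather than $\tfrac{a^2}{h^2}$ because $\mu_n(\Omega)=0$ reads $\Omega-\tfrac{h^2+a^2}{2h^2}=\tfrac{a^2}{h^2}I'K'$, but the stated $\Omega_{nm}$ uses half that product --- this apparent mismatch should be reconciled by recomputing $Q(0)$ or by a factor-of-two bookkeeping in $P(0)[f]$; I would re-examine \eqref{2.11} and \eqref{eq2.14} carefully at this point.)

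Next, for assertion (2), I would show that when $\Omega=\Omega_m$ (the $n=1$ value), $\mu_n(\Omega_m)=0$ has $n=1$ as its \emph{only} solution. This is exactly where Proposition \ref{prop3-6} enters: under the hypothesis $m>M(a,h)$ or $h\in(0,H_0(a))\cup(H_1(a),+\infty)$, the map $\nu\mapsto I'_\nu(\nu z)K'_\nu(\nu z)$ (with $z=a/h$) is strictly monotone in $\nu$, hence $n\mapsto I'_{nm}(\tfrac{nma}{h})K'_{nm}(\tfrac{nma}{h})$ is strictly monotone in $n\ge 1$, so $\Omega_{nm}\ne\Omega_m$ for all $n\ge 2$. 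Consequently $\mu_n(\Omega_m)\ne 0$ for $n\ge 2$, the kernel consists precisely of multiples of $\cos(m\theta)$, and it is one-dimensional.

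For assertion (3), I would identify $\operatorname{Range}(d_r\mathcal{F}(\Omega_m,0))=\{g=\sum_{n\ge 2}g_n\sin(nm\theta)\in\mathbb{Y}_m\}$, i.e. the closed codimension-one subspace of $\mathbb{Y}_m$ where the $n=1$ Fourier--sine coefficient vanishes; closedness and codimension one follow since for $n\ge 2$ the multipliers $nm\,\mu_n(\Omega_m)$ are bounded away from zero (using the monotonicity plus the decay estimates on $I'_{nm}K'_{nm}$ provided alongside Proposition \ref{prop3-6}), so the operator restricted to the complement of the kernel is an isomorphism onto the range with bounded inverse. Finally, for assertion (4), I would compute $\partial_\Omega d_r\mathcal{F}(\Omega_m,0)[\cos(m\cdot)]$: differentiating the multiplier formula in $\Omega$ kills the Bessel terms and yields $\partial_\Omega d_r\mathcal{F}(\Omega_m,0)[\cos(m\cdot)](\theta)=-m\sin(m\theta)$, whose $n=1$ sine coefficient is nonzero, so it lies outside the range. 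I expect the main obstacle to be assertion (2) --- specifically, making sure the monotonicity conclusion of Proposition \ref{prop3-6} (which is stated for $\nu\ge 2$ with $z$-restrictions, or for $\nu>M(z)$) genuinely covers all $n\ge 2$ with $\nu=nm$ and $z=a/h$ under the stated hypotheses on $m$ and $h$, and in checking that the $n=1$ root is isolated even in the borderline regime; the codimension/closedness argument in (3) also requires the quantitative decay of $I'_{nm}K'_{nm}$ to rule out the multipliers accumulating at zero.
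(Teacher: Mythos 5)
Your proposal follows the same route as the paper: read off the diagonal multiplier action from Lemma \ref{lem2.2}, invoke the monotonicity of $\nu\mapsto I'_\nu(\nu z)K'_\nu(\nu z)$ from Proposition \ref{prop3-6} (with $z=a/h$, $\nu=nm\geq 2$ or $\nu=nm>M(z)$, which the stated hypotheses do cover) to isolate the $n=1$ root, identify the range with the sine modes $n\geq 2$, and note that $\partial_\Omega d_r\mathcal{F}(\Omega_m,0)[\cos(m\cdot)]=-m\sin(m\theta)$ has a nonzero $n=1$ coefficient. Your factor-of-two worry is well placed: Lemma \ref{lem2.2}, Theorem \ref{thm-simply} and the proof of Proposition \ref{prop3-7} all use $\tfrac{a^2}{h^2}I'_{nm}K'_{nm}$, so the $\tfrac{a^2}{2h^2}$ in the statement of the proposition is a typo, and the version of $\Omega_{nm}$ coming from Lemma \ref{lem2.2} is the one to use.

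The one genuine gap is in assertion (3). You argue that since the multipliers $nm\,\mu_n(\Omega_m)$ are bounded away from zero, the operator is an isomorphism onto its range ``with bounded inverse.'' In the Hölder framework $\mathbb{X}_m=C^{1,1/2}\cap(\cdots)\to\mathbb{Y}_m=C^{1/2}\cap(\cdots)$ this inference does not stand on its own: a Fourier multiplier with merely bounded (or even $O(1/n)$) coefficients need not be bounded between Hölder spaces, so nonvanishing of the symbol does not yield surjectivity onto the codimension-one subspace. What is actually needed, and what the paper supplies, is a structural decomposition of the candidate preimage: writing $f_n=-g_n/\bigl(nm(\Omega_m-\Omega_{nm})\bigr)$, one splits
\begin{equation*}
	f'(\theta)=\sum_{n\ge2}\Bigl(\tfrac{1}{\Omega_m-\Omega_{nm}}-c^{-1}\Bigr)g_n\sin(nm\theta)+c^{-1}g(\theta)=W*g(\theta)+c^{-1}g(\theta)
\end{equation*}
for a suitable constant $c$ matching the limit of $\Omega_m-\Omega_{nm}$, observes that the deviation coefficients are $O\bigl(|I'_{nm}K'_{nm}|\bigr)=O(1/n)$ (this is where the decay estimates of Lemma \ref{lemma3.5} enter), hence $W\in L^2\subset L^1$, and concludes $f'\in C^{1/2}$ from $L^1*C^{1/2}\subset C^{1/2}$. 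Your proposal names the right ingredients (decay of $I'_{nm}K'_{nm}$) but does not carry out this step, and the argument as written would not close without it.
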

	\begin{proof}
		Letting $f$ be a function in $\mathbb{X}_m$ taking the form
		\begin{equation*}
			f(\theta)=\sum_{n=1}^{\infty}f_n\cos(nm\theta),
		\end{equation*}
		in view of Lemma \ref{lem2.2}, we have,  
		\begin{equation}\label{eq2.15}
			d_r\mathcal{F}(\Omega_{m},0)[f](\theta)=-\sum_{n=2}^{\infty}nm\left(\Omega_{ m}-\Omega_{nm}\right)f_n\sin(nm\theta).
		\end{equation}
 
		(1)~Recall that the eigenvalues $\Omega_{nm}$ are strictly monotonic with respect to $n$ due to Proposition \ref{prop3-6}, we deduce that  $\Omega_{ m}\not=\Omega_{nm}$ for $n\geq 2$ and hence ${\rm ker}(d_r\mathcal{F}(\Omega_{ m},0))$ has dimension 1 and can be generated by $\cos( m\cdot)$.

		(2)~According to \eqref{eq2.15}, it is reasonable to conjecture that the range of $d_r\mathcal{F}(\Omega_{ m},0)$ coincides with 
		\begin{equation*}
			\mathbb{Z}_{m }:=\left\{g\in C^\alpha(\mathbb{T}):g(\theta)=\sum_{n=2}^{\infty}g_n\sin(nm\theta),~g_n\in\mathbb{R}\right\}.
		\end{equation*}
		We now prove this is true. In fact, it is sufficient to prove $\mathbb{Z}_m\subset \text{Range}(d_r\mathcal{F}(\Omega_{ m},0))$. For any $g\in\mathbb{Z}_m$ with the form
		\begin{equation*}
			g(\theta)=\sum_{n=2}^{\infty}g_n\sin(nm\theta),
		\end{equation*}
		we define
		\begin{equation*}
			f_n=-\frac{g_n}{nm\left(\Omega_{ m}-\Omega_{nm}\right)},\quad \text{for}\ n\geq 2.
		\end{equation*}
		Using the monotonicity of the eigenvalues, it is obvious that 
		\begin{equation*}
			\frac{1}{\Omega_{ m}-\Omega_{nm}}=O_n(1).
		\end{equation*}
		Defining
		\begin{equation*}
			f(\theta)=\sum_{n=2}^{\infty}f_n\cos(nm\theta),
		\end{equation*}
		then we formally have $d_r\mathcal{F}(\Om_m,0)[f]=g$. In order to make the construction rigorous, we need to show that $f\in C^{1,1/2}$. Firstly, we estimate the $L^\infty$ norm of $h$, namely
		\begin{equation*}
			\begin{split}
				\|f\|_{\infty} \lesssim \sum_{n=2}^{\infty}\frac{|g_n|}{n} \lesssim  \left(\sum_{n=2}^{\infty}\frac{1}{n^2}\right)^{1/2}||g||_2 <\infty.
			\end{split}
		\end{equation*}
		Next, we need to study the derivative of $f(\theta)$. To check $f'\in C^{1/2}$, we decompose $f'$ as follows
		\begin{equation*}
		\begin{aligned}
				f'(\theta)&=\sum_{n=2}^{\infty}\left(\frac{1}{\Omega_{ m}-\Omega_{nm}}-\frac{1}{\Omega_{ m}}\right)g_n\sin(nm\theta)+\frac{1}{\Omega_{ m}}g(\theta)\\
				&=W*g(\theta)+\frac{1}{\Omega_{ m}}g(\theta),
		\end{aligned}
		\end{equation*}
		where
		\begin{equation*}
			W(\theta)=\sum_{n=2}^{\infty}\left(\frac{1}{\Omega_{ m}-\Omega_{nm}}-\frac{1}{\Omega_{ m}}\right)\sin(nm\theta)
		\end{equation*}
		and the sum above converges in $L^2$, since 
		\begin{equation*}
			\frac{1}{\Omega_{ m}-\Omega_{nm}}-\frac{1}{\Omega_{ m}}=O\left(\left|I'_{nm}\left(\frac{nma}{h}\right)K'_{nm}\left(\frac{nma}{h}\right)\right|\right)=O\left(\frac{1}{n}\right).
		\end{equation*}
		Now, using the fact that $L^1*C^{1/2}\to C^{1/2}$, we conclude $f'\in C^{1/2}$. Hence, it implies $\mathbb{Z}_{m }=\text{Range}(d_r\mathcal{F}(\Omega_{ m},0))$, which is closed and of co-dimension one.

		(3)The transversality condition follows from the fact that
		\begin{equation*}
			\partial_\Omega d_r\mathcal{F}(\Omega_{ m},0)[\cos( m\cdot)](\theta)=- m\sin( m\theta)\notin\mathbb{Z}_{m }.
		\end{equation*}
		The proof is thus completed.
	\end{proof}

	\begin{proof}[\textbf{Proof of Theorem }\ref{thm-simply}]
		\
		
		In view of Corollary \ref{trivialsolution}, Proposition \ref{prop3-2} and Proposition \ref{prop3-6}, the functional $\mathcal{F}$ satisfies all the assumptions required in Theorem \ref{CR}, then the conclusion in Theorem \ref{CR} give the desired results in Theorem \ref{thm-simply}, thus the proof is completed.
	\end{proof}

	\begin{remark}\label{rem}
			It can be seen that the   conditions that either  
			$m>M(a,h)$ or $h\in (0, H_0(a))\cup (H_1(a), +\infty)$   in Theorem \ref{thm-simply}  come from Proposition \ref{prop3-6}  when we verify whether the function $f_z(\nu)$ defined in \eqref{f} is greater than zero. We remark that numerical computations show that $f_z(\nu)>0$ for all $z>0$ and $\nu\geq3$, which indicates that  the conditions that either  
			$m>M(a,h)$ or $h\in (0, H_0(a))\cup (H_1(a), +\infty)$  in Theorem \ref{thm-simply} can be actually removed. However, to achieve this goal,  more detailed estimates and calculations are in demand.
	\end{remark}

\section{Doubly connected time-periodic solutions}\label{sec4}

	In this section, we construct uniformly rotating $m$-fold symmetric vortex patches for \eqref{eq1.6} and prove Theorem \ref{thm-doubly}.

	Let $a_1>a_2>0$ be the two numbers in Theorem \ref{thm-doubly}.  We seek for solutions of the form 
	$$\omega(x,t)=1_{e^{i\Om t}( D_1\setminus \overline{D_2})},$$ 
	with $D_k$ being a perturbation of the disk $B_{a_k}(0)$ for $k=1,2$. We parameterize $\partial D_k$ by $$ \sigma_k: \mathbb T\mapsto \partial D_k$$ with 
	\begin{equation}\label{param-d}
		\sigma_k(\theta)=R_k(\theta)e^{i\theta},\ \ \  R_k(\theta)=\sqrt{a_k+r_k(\theta)}.
	\end{equation}

	By \eqref{eq1.6} and \eqref{param-d}, and an argument similar to the proof of Proposition \ref{CDE-s}, we obtain the equations for $r_k$:
	\begin{equation*}
			\mathcal{F}_k(\Omega,r_1, r_2)=0, \quad  k=1,2,
	\end{equation*}
	where, for $\theta\in\mathbb{T}$, we denote
	\begin{equation}\label{eq4-1}
		\begin{split}
			\mathcal{F}_k(\Omega,r_1, r_2)(\theta)=\Omega r'_k(\theta)+\partial_\theta&\left(\int_{0}^{2\pi}\int_{0}^{R_1(\phi)}G_H\left(R_k(\theta) e^{i\theta},\rho e^{i\phi}\right)\rho\,d\rho\,d\phi\right.\\
			&\quad\left.-\int_{0}^{2\pi}\int_{0}^{R_2(\phi)}G_H\left(R_k(\theta) e^{i\theta},\rho e^{i\phi}\right)\rho\,d\rho\,d\phi\right).
		\end{split}
	\end{equation}
	
	 In view of Corollary \ref{trivialsolution}, by  direct computations,  we can check that the above contour dynamics equation has  a family of trivial solutions: $$\mathcal F_k(\Om, 0, 0)\equiv 0, \ \ \ \forall\ \Om\in \mathbb R,\  \ \ k=1,2.$$ In the following, we will find non-radial solutions bifurcating from this family of trivial solutions.

	In order to apply the Crandall-Rabinowitz theorem, the first step is to verify certain regularity properties of $\mathcal{F}_k$, which we collect in the following lemma. 
	\begin{lemma}\label{lem4-1}
		Given $m\in\mathbb{N}$ and $\varepsilon>0$ small enough, for $k=1,2$, the functional
		\begin{equation*}
			\mathcal{F}_k:\mathbb{R}\times\mathbb{B}_{m,\varepsilon}\times\mathbb{B}_{m,\varepsilon}\to\mathbb{Y}_m\times\mathbb{Y}_m
		\end{equation*}
		is well-defined and belongs to $C^1\left(\mathbb{R}\times\mathbb{B}_{m,\varepsilon}\times\mathbb{B}_{m,\varepsilon},\mathbb{Y}_m\times\mathbb{Y}_m\right)$. 
		Moreover, its mixed partial derivative $\partial_\Omega d_{r_1,r_2}\mathcal{F}_k$ exists in the sense that
		\begin{equation*}
			\partial_\Omega d_{r_1,r_2}\mathcal{F}_k:\mathbb{R}\times\mathbb{B}_{m,\varepsilon}\times\mathbb{B}_{m,\varepsilon}\to\mathcal{L}(\mathbb{X}_m\times\mathbb{X}_m,\mathbb{Y}_m\times\mathbb{Y}_m)
		\end{equation*}
		and belongs to $C^0\left(\mathbb{R}\times\mathbb{B}_{m,\varepsilon}\times\mathbb{B}_{m,\varepsilon},\mathcal{L}(\mathbb{X}_m\times\mathbb{X}_m,\mathbb{Y}_m\times\mathbb{Y}_m)\right)$.
	\end{lemma}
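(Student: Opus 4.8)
The plan is to reduce Lemma \ref{lem4-1} to the already-proven single-patch regularity statement, Proposition \ref{prop3-2}, by decomposing each functional $\mathcal F_k$ into pieces, each of which is either a ``diagonal'' term (the boundary $\partial D_k$ evaluated against the patch bounded by the \emph{same} curve $R_k$) or an ``off-diagonal'' term (the boundary $\partial D_k$ evaluated against the patch bounded by the \emph{other} curve $R_j$, $j\neq k$). Writing out \eqref{eq4-1}, we see that
\begin{equation*}
	\mathcal F_k(\Omega,r_1,r_2)(\theta)=\Omega r_k'(\theta)+\partial_\theta\!\left(\int_{D_1}G_H(R_k(\theta)e^{i\theta},y)\,dy\right)-\partial_\theta\!\left(\int_{D_2}G_H(R_k(\theta)e^{i\theta},y)\,dy\right),
\end{equation*}
so the four nontrivial terms are: $(k,1)$ with $k=1$, $(k,2)$ with $k=2$ (the two diagonal terms), and $(k,1)$ with $k=2$, $(k,2)$ with $k=1$ (the two off-diagonal terms). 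The diagonal terms are literally of the form treated in Proposition \ref{prop3-2}: the map $r\mapsto\partial_\theta\big(\int_{D(r)}G_H(R(\theta)e^{i\theta},y)\,dy\big)$ is $C^1$ from $\mathbb B_{m,\varepsilon}$ to $\mathbb Y_m$, and its $\Omega$-derivative through the contour-dynamics coupling is continuous into $\mathcal L(\mathbb X_m,\mathbb Y_m)$. So for the diagonal contributions there is nothing new to prove; one simply invokes Proposition \ref{prop3-2} with $a=a_k$.

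The real content is the off-diagonal terms, i.e.\ maps of the form $(r_k,r_j)\mapsto\partial_\theta\big(\int_{D_j(r_j)}G_H(R_k(\theta)e^{i\theta},y)\,dy\big)$ where the evaluation point $R_k(\theta)e^{i\theta}$ lies on $\partial D_k$ and the domain of integration is $D_j$. Here I would exploit the key topological fact that, since $\overline{D_2}\subset D_1$ with a positive gap, the evaluation boundary $\partial D_k$ and the integration domain $\overline{D_j}$ are \emph{disjoint} compact sets when $\varepsilon$ is small (this is exactly where the hypothesis $a_1>a_2$ and the smallness of $\varepsilon$ enter). On disjoint sets the kernel $G_H(x,y)$ and all its $x$-derivatives are smooth and bounded: by the interior elliptic regularity used throughout Section \ref{sec2} (or directly from Theorem \ref{decom}, since the logarithmic singularity in \eqref{2-15} is never activated when $|x-y|$ is bounded below), $G_H\in C^\infty$ away from the diagonal. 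Consequently $x\mapsto\int_{D_j}G_H(x,y)\,dy$ is a smooth function on a neighborhood of $\partial D_k$, depending smoothly on $r_j$ through the domain (differentiating in $r_j$ brings out a boundary integral $\int_0^{2\pi}G_H(x,R_j(\phi)e^{i\phi})\big(\text{factor}\big)\,d\phi$, again a smooth integrand), and depending smoothly on $r_k$ through the evaluation point $x=R_k(\theta)e^{i\theta}$ and the outer $\partial_\theta$. Composition of these smooth maps with the $C^{1,1/2}$ parametrizations gives $C^1$ dependence into $\mathbb Y_m$ without any singular-integral analysis; continuity of $\partial_\Omega d_{r_1,r_2}$ of the off-diagonal terms follows the same way, and is in fact easier than the diagonal case.

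The symmetry structure — that $\mathcal F_k$ maps the $\cos(nm\theta)$-type space $\mathbb B_{m,\varepsilon}$ into the $\sin(nm\theta)$-type space $\mathbb Y_m$ — is checked exactly as for $\mathcal F$ in the simply connected case: one uses that $G_H(\rho e^{i\theta},\rho_0 e^{i\phi})=G_H(\rho,\rho_0 e^{i(\phi-\theta)})$ (rotational invariance of $\mathcal L_H$) together with the evenness $G_H(\rho,\rho_0 e^{i\phi})=G_H(\rho,\rho_0 e^{-i\phi})$, so that plugging in an even (in $\theta$) profile $r_k$ yields an even integral in $\theta$, whose $\theta$-derivative is odd; the Fourier support in multiples of $m$ is preserved because the trivial domains are disks and the construction is $m$-fold equivariant. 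I expect the main obstacle to be purely bookkeeping: organizing the two-component functional $\mathcal F=(\mathcal F_1,\mathcal F_2)$ and its Fréchet derivative $d_{r_1,r_2}\mathcal F\in\mathcal L(\mathbb X_m\times\mathbb X_m,\mathbb Y_m\times\mathbb Y_m)$ as a $2\times2$ block operator, verifying that each of the four blocks lands in the right space (diagonal blocks via Proposition \ref{prop3-2}, off-diagonal blocks via the disjoint-support smoothing argument), and confirming that the uniform-in-$r$ estimates needed for the $C^0$ statement on $\partial_\Omega d_{r_1,r_2}\mathcal F$ are uniform on $\mathbb B_{m,\varepsilon}\times\mathbb B_{m,\varepsilon}$ — which they are, since the separation distance $\mathrm{dist}(\partial D_k,\overline{D_j})$ is bounded below uniformly once $\varepsilon<\tfrac14(a_1-a_2)$, say. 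No genuinely new analytic difficulty arises beyond what was already handled in Proposition \ref{prop3-2}.
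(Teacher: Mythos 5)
Your overall strategy — reduce to Proposition \ref{prop3-2} block by block, with the diagonal blocks handled verbatim and the cross terms handled by a separation argument — is exactly what the paper intends (its ``proof'' consists of the single sentence that the argument of Proposition \ref{prop3-2} adapts without new ideas). However, there is one concrete error in your treatment of the off-diagonal terms. You claim that ``the evaluation boundary $\partial D_k$ and the integration domain $\overline{D_j}$ are disjoint compact sets'' for both cross terms. This is true for $(k,j)=(1,2)$: indeed $\partial D_1\cap\overline{D_2}=\emptyset$ when $\varepsilon$ is small. But it is \emph{false} for $(k,j)=(2,1)$: the evaluation points $R_2(\theta)e^{i\theta}\in\partial D_2$ lie \emph{inside} $D_1$, so $\partial D_2\subset\overline{D_1}$ and the kernel $G_H(R_2(\theta)e^{i\theta},y)$ is genuinely singular as $y\in D_1$ approaches the evaluation point. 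The sentence ``On disjoint sets the kernel $G_H(x,y)$ and all its $x$-derivatives are smooth and bounded'' therefore does not justify the smoothness of $x\mapsto\int_{D_1}G_H(x,y)\,dy$ near $\partial D_2$, and as written that block of your argument does not go through.

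The conclusion for that block is nevertheless true, and the fix is short: since $\overline{D_2}\Subset D_1$ with a uniform gap, the function $\Psi(x)=\int_{D_1}G_H(x,y)\,dy$ satisfies $\mathcal L_H\Psi=1$ (a constant density) on a neighborhood of $\partial D_2$, so \emph{interior} elliptic regularity for the smooth-coefficient operator $\mathcal L_H$ gives $\Psi\in C^\infty$ there; the dependence on the evaluation point $R_2(\theta)e^{i\theta}$ is then as smooth as you assert, while the $r_1$-derivative of this term produces the boundary integral $\int_0^{2\pi}G_H\bigl(R_2(\theta)e^{i\theta},R_1(\phi)e^{i\phi}\bigr)(\cdots)\,d\phi$, whose arguments \emph{are} uniformly separated, so that part of your reasoning is correct. (Alternatively, one can simply observe that this block has exactly the structure already handled in Proposition \ref{prop3-2} — a logarithmic singularity integrated over a domain containing the evaluation point — so the singular-integral machinery there applies verbatim; but then one cannot claim, as you do, that ``no singular-integral analysis'' is needed for the off-diagonal terms.) The remainder of your proposal — the $(1,2)$ cross term, the symmetry/Fourier-support bookkeeping, and the uniformity of the estimates on $\mathbb B_{m,\varepsilon}\times\mathbb B_{m,\varepsilon}$ — is sound.
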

	The above lemma can be proved by adapting argument of the proof for Proposition \ref{prop3-2} without introducing any new ideas. So we omit the details here. 
	\smallskip
	
	Next, we shall linearize $\mathcal{F}_k$ at the trivial solution $(\Om, 0, 0)$. 
	\begin{lemma}\label{Linearization-d}
		Denote $\bm{\mathcal{F}}=(\mathcal{F}_1, \mathcal{F}_2)$. For $f_1, f_2\in\mathbb{X}_m$ taking the form
		\begin{equation*}
			f_1(\theta)=\sum_{n=1}^{\infty}f_n^{(1)}\cos(nm\theta), \ \ \ f_2(\theta)=\sum_{n=1}^{\infty}f_n^{(2)}\cos(nm\theta),
		\end{equation*}
		it holds that
		\begin{equation*}
			d_{r_1,r_2}\bm{\mathcal{F}}(\Omega,0,0)[f_1,f_2](\theta)=-\sum_{n=1}^{\infty}nm M_{nm}(\Om, a_1, a_2) \begin{pmatrix}
				f_{n}^{(1)} \\
				f_n^{(2)}
			\end{pmatrix}\sin(nm\theta),
		\end{equation*} where the matrix $	M_n(\Om, a_1, a_2)$ is given by 
		\begin{equation*}
			M_n(\Om,a_1,a_2)=\begin{pmatrix}
				\Om-\frac{a_1^2}{h^2}I_n'\left(\frac{na_1}{h}\right)K_n'\left(\frac{na_1}{h}\right)-\frac{(h^2+a_1^2)(a_1^2-a_2^2)}{2h^2a_1^2}&-\frac{a_1a_2}{h^2}I_n'\left(\frac{na_2}{h}\right)K_n'\left(\frac{na_1}{h}\right)\\
				\frac{a_1a_2}{h^2}I_n'\left(\frac{na_2}{h}\right)K_n'\left(\frac{na_1}{h}\right)&\Om+\frac{a_2^2}{h^2}I_n'\left(\frac{na_2}{h}\right)K_n'\left(\frac{na_2}{h}\right)
			\end{pmatrix}.
		\end{equation*}
		
	\end{lemma}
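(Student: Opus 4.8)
The plan is to compute $d_{r_1,r_2}\bm{\mathcal F}(\Omega,0,0)$ directly from the definition \eqref{eq4-1}, following the same scheme as in the proof of Lemma \ref{lem2.2} but now keeping track of the ``cross'' contributions between the two boundaries. Writing $R_{t,f_k}=\sqrt{a_k+r_k+tf_k}$ and differentiating $\mathcal F_k(\Omega,r_1,r_2)$ at $(\Omega,0,0)$ in the directions $(f_1,f_2)$, the boundary terms arising from varying the domains of integration together with the term coming from varying the evaluation point $R_k(\theta)e^{i\theta}$ split into four pieces $d_{r_j}\mathcal F_k$, $j,k\in\{1,2\}$. For each such piece the computation is exactly the one already performed in Lemma \ref{lem2.2}: one gets a ``potential'' contribution $Q_{k}(\theta)f_k(\theta)$ of the form $\frac{e^{i\theta}}{R_k(\theta)}\cdot\int_0^{2\pi}\int_0^{R_j(\phi)}\nabla G_H(R_k e^{i\theta},\rho e^{i\phi})\rho\,d\rho\,d\phi$, evaluated at the trivial solution, and a ``boundary'' contribution $\int_0^{2\pi}G_H(a_k e^{i\theta},a_j e^{i\phi})f_j(\phi)\,d\phi$.

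The key evaluation step is to plug in the Fourier expansion \eqref{Fourier}–\eqref{2.5} for $G_H$. Exactly as in \eqref{eq2.9}–\eqref{eq2.11}, for $f_j(\theta)=\sum_n f_n^{(j)}\cos(nm\theta)$ the boundary term becomes
\begin{equation*}
\int_0^{2\pi}G_H(a_k e^{i\theta},a_j e^{i\phi})f_j(\phi)\,d\phi
=-\sum_{n=1}^\infty \frac{a_k a_j}{h^2}\,\mathcal B_{nm}(a_k,a_j)\,f_n^{(j)}\cos(nm\theta),
\end{equation*}
where $\mathcal B_{\nu}(a_k,a_j)=I_\nu'\!\left(\tfrac{\nu a_{\min}}{h}\right)K_\nu'\!\left(\tfrac{\nu a_{\max}}{h}\right)$ with $a_{\min}=\min\{a_k,a_j\}$, $a_{\max}=\max\{a_k,a_j\}$; in particular, for $k=j$ this reduces to $-\frac{a_k^2}{h^2}I_{nm}'K_{nm}'$, while for $k\neq j$ it is the genuinely off-diagonal coupling $-\frac{a_1a_2}{h^2}I_{nm}'\!\left(\tfrac{nm a_2}{h}\right)K_{nm}'\!\left(\tfrac{nm a_1}{h}\right)$. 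For the potential terms $Q_k(\theta)$, the relevant quantity is $\partial_{x_1}\Psi$ at $(a_k,0)$, where $\Psi$ is the radial stream function generated by $1_{B_{a_1}(0)\setminus\overline{B_{a_2}(0)}}$; solving the radial ODE $-\frac1\rho\partial_\rho\!\big(\tfrac{\rho}{1+\rho^2/h^2}\partial_\rho\Psi\big)=1_{\{a_2<\rho<a_1\}}$ as in \eqref{eq2.13} gives the constant $-\frac{(h^2+a_k^2)(a_1^2-a_2^2)}{2h^2 a_k^2}$ contributed to the $(k,k)$ entry (and the cross potential term $Q_k$ acting on $f_k$ already sits in the same entry). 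Finally $\partial_\theta$ turns each $\cos(nm\theta)$ into $-nm\sin(nm\theta)$, and adding $\Omega\,r_k'$ produces the $\Omega$ on the diagonal; assembling the four blocks yields precisely the stated matrix $M_{nm}(\Omega,a_1,a_2)$, with the antisymmetric sign pattern in the off-diagonal entries coming from the opposite roles of $R_1$ and $R_2$ in \eqref{eq4-1} (the $+$ sign in the $(2,1)$ entry versus the $-$ sign carried into the $(1,2)$ entry).

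The only genuinely delicate point is the justification of differentiating under the integral sign and interchanging the $\phi$-integration with the Fourier summation in the expansion of $G_H$; this is where the mildly singular (logarithmic) nature of $G_H$ matters, but it is handled exactly as in Proposition \ref{prop3-2} and Lemma \ref{lem2.2} via the Green's function expansion \eqref{2-15} and the uniform convergence of the Bessel series established in Section \ref{sec2}, so no new idea is required — which is why, as noted after the statement of Lemma \ref{lem4-1}, these regularity matters are not repeated. Thus the bulk of the proof is the bookkeeping of the four blocks and the evaluation of the two explicit radial quantities above.
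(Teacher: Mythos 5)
Your overall scheme is exactly the paper's: split the derivative into the four blocks $d_{r_j}\mathcal F_k$, evaluate the boundary terms via the Fourier expansion \eqref{2.5} (with the $\min/\max$ convention in the arguments of $I'_\nu$ and $K'_\nu$), and evaluate the point-variation ("potential") terms by solving a radial ODE as in \eqref{eq2.12}--\eqref{eq2.13}. The boundary-term bookkeeping and the sign pattern of the off-diagonal entries are correct.

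However, there is a concrete computational error in your potential term for $k=2$. You claim that $\frac{1}{a_k}\partial_{x_1}\Psi(a_k,0)$, with $\Psi$ the radial stream function of the annulus $1_{B_{a_1}(0)\setminus\overline{B_{a_2}(0)}}$, equals $-\frac{(h^2+a_k^2)(a_1^2-a_2^2)}{2h^2a_k^2}$ for both $k=1$ and $k=2$. Integrating the ODE once gives
\begin{equation*}
	\frac{\rho}{1+\rho^2/h^2}\,\partial_\rho\Psi(\rho)=-\int_0^{\rho}s\,1_{\{a_2<s<a_1\}}\,ds,
\end{equation*}
and the right-hand side vanishes at $\rho=a_2$ (there is no vorticity inside the hole), so $\partial_\rho\Psi(a_2)=0$: the annulus induces no velocity on its inner boundary. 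Your formula is correct only at $\rho=a_1$. With your value, the $(2,2)$ entry would acquire a spurious term $-\frac{(h^2+a_2^2)(a_1^2-a_2^2)}{2h^2a_2^2}$ and would not reproduce the stated $\Om+\frac{a_2^2}{h^2}I_n'\left(\frac{na_2}{h}\right)K_n'\left(\frac{na_2}{h}\right)$, so your final assembly step, as written, contradicts the matrix you are trying to derive. In the paper this cancellation appears as the contribution $-\frac{h^2+a_2^2}{2h^2}$ from $\partial_{x_1}\Psi_3(a_2,0)$ (the outer disk $B_{a_1}$ evaluated at $a_2$) exactly offsetting the $+\frac{h^2+a_2^2}{2h^2}$ coming, with the minus sign of \eqref{eq4-1}, from Lemma \ref{lem2.2} applied to the inner disk $B_{a_2}$. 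Once you replace your $(2,2)$ constant by $0$, the rest of the argument goes through and matches the paper's proof.
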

	\begin{proof}
		We first compute $d_{r_1}\mathcal{F}_1(\Om, 0, 0)[f_1]$. By comparing the expressions of $\mathcal{F}_1$ in \eqref{eq4-1} and $\mathcal{F}$ in \eqref{eq2-17} , and using the results from Lemma \ref{lem2.2}, it is not difficult to see that only the derivative of $\partial_\theta \int_{0}^{2\pi}\int_{0}^{R_2(\phi)}G_H\left(R_1(\theta) e^{i\theta},\rho e^{i\phi}\right)\rho d\rho d\phi$ with respect to $r_1$ is needed to be determined. Denote $$ F_{1,3}(r_1, r_2)= \int_{0}^{2\pi}\int_{0}^{R_2(\phi)}G_H\left(R_1(\theta) e^{i\theta},\rho e^{i\phi}\right)\rho d\rho d\phi.$$ By exploring ideas in \eqref{eq2.12},  direct computations yield 
		\begin{align}\label{4-3}
			d_{r_1}F_{1,3}(0, 0)[f_1](\theta)&=\frac{f_1(\theta)}{a_1}\int_{0}^{2\pi}\int_{0}^{a_2 }\nabla G_H\left(a_1  e^{i\theta},\rho e^{i\phi}\right)\cdot e^{i\theta} \rho\,d\rho\,d\phi\\
			&=\frac{f_1(\theta)}{a_1} \partial_{x_1} \left(\int_{\mathbb{R}^2}G_H(x,y)1_{B_{a_2}(0)}(y)dy\right)\bigg|_{x=(a_1,0)}\nonumber\\
			&=:\frac{f_1(\theta)}{a_1}\partial_{x_1} \Psi_2(a_1,0).\nonumber
		\end{align}
		 It is obvious that $\Psi_2$ is radial and satisfies the following ordinary differential equation
		\begin{equation*}
			-\frac{1}{\rho}\partial_\rho\left(\frac{\rho}{1+\rho^2/h^2}\partial_\rho\Psi_2\right)= 1_{\{\rho<a_2\}},
		\end{equation*}
		which can be solved by
		\begin{equation*}
			\Psi_2(\rho)=-\int_{0}^{\rho}\frac{h^2+s^2}{h^2s}\int_{0}^{\min\left\{s,a_2\right\}}\tau d\tau ds+\text{constant},
		\end{equation*}
		and gives
		\begin{equation}\label{4-4}
			\partial_{x_1}\Psi_2(a_1,0)=-\frac{h^2+a_1^2}{2h^2 a_1}a_2^2.
		\end{equation}
		The results from Lemma \ref{lem2.2}, combined with the definition of $\mathcal{F}_1$ in \eqref{eq4-1} and the identities in \eqref{4-3} and \eqref{4-4}, allow us to derive the expression for $d_{r_1}\mathcal{F}_1(\Om, 0, 0)[f_1]$. Consequently, this leads to the expression for the element in the upper left corner of the matrix $M_n(\Om, a_1, a_2)$.
		\smallskip
		
		Next, we compute $d_{r_2}\mathcal{F}_1(\Om, 0, 0)[f_2]$. By the expression of  $\mathcal{F}_1$,   we  find 
		\begin{align*}
			d_{r_2}\mathcal{F}_1(\Om, 0, 0)[f_2](\theta)=-\partial_\theta \int_{0}^{2\pi} G_H\left(a_1  e^{i\theta}, a_2 e^{i\phi}\right)f_2(\phi) d\phi.
		\end{align*}
		Using ideas similar to \eqref{eq2.9}--\eqref{2.11}, one can verify that 
		\begin{equation}\label{4-5}
			\int_{0}^{2\pi} G_H\left(a_1  e^{i\theta}, a_2 e^{i\phi}\right)f_2(\phi) d\phi= \sum_{n=1}^{\infty}\frac{a_1a_2I'_{nm}\left(\frac{nma_2}{h}\right)K'_{nm}\left(\frac{nma_1}{h}\right)}{ h^2}f_n^{(2)}\cos(nm\theta).
		\end{equation}
		The expression for the element in the upper right corner of the matrix $M_n(\Om, a_1, a_2)$  is thus obtained immediately. Similarly, the expression for the element in  the lower left corner of the matrix $M_n(\Om, a_1, a_2)$  can be derived by computing $d_{r_1}\mathcal{F}_2(\Om, 0, 0)[f_1]$ in a similar manner.
		 
		 \bigskip
		 
		It remains to determine the expression for element in the lower right corner of matrix $M_n(\Om, a_1, a_2)$ by calculating  $d_{r_2}\mathcal{F}_2(\Om, 0, 0)[f_2]$. In view of the computations in Lemma \ref{lem2.2}, one finds that only the derivative of $\partial_\theta \int_{0}^{2\pi}\int_{0}^{R_1(\phi)}G_H\left(R_2(\theta) e^{i\theta},\rho e^{i\phi}\right)\rho d\rho d\phi$ with respect to $r_2$ is needed to compute.
		Denote $$ F_{2,3}(r_1, r_2)= \int_{0}^{2\pi}\int_{0}^{R_1(\phi)}G_H\left(R_2(\theta) e^{i\theta},\rho e^{i\phi}\right)\rho d\rho d\phi.$$ Direct computations yield 
		\begin{align*} 
		d_{r_2}F_{2,3}(0, 0)[f_2](\theta)&=\frac{f_2(\theta)}{a_2}\int_{0}^{2\pi}\int_{0}^{a_1 }\nabla G_H\left(a_2  e^{i\theta},\rho e^{i\phi}\right)\cdot e^{i\theta} \rho \,d\rho\,d\phi\\
		&=\frac{f_2(\theta)}{a_2} \partial_{x_1} \left(\int_{\mathbb{R}^2}G_H(x,y)1_{B_{a_1}(0)}(y)dy\right)\bigg|_{x=(a_2,0)}\nonumber\\
		&=:\frac{f_2(\theta)}{a_2}\partial_{x_1} \Psi_3(a_2,0),\nonumber
		\end{align*}
		where $\Psi_3$ is radial and satisfies the following ordinary differential equation
		\begin{equation*}
		-\frac{1}{\rho}\partial_\rho\left(\frac{\rho}{1+\rho^2/h^2}\partial_\rho\Psi_3\right)= 1_{\{\rho<a_1\}},
		\end{equation*}
		which can be solved by
		\begin{equation*}
		\Psi_3(\rho)=-\int_{0}^{\rho}\frac{h^2+s^2}{h^2s}\int_{0}^{\min\left\{s,a_1\right\}}\tau d\tau ds+\text{constant},
		\end{equation*}
		and gives
		\begin{equation*} 
		\partial_{x_1}\Psi_3(a_2,0)=-\frac{h^2+a_2^2}{2h^2}a_2.
		\end{equation*}
		Then we obtain the  expression for  element in the lower right corner of $M_n(\Om, a_1, a_2)$  by combining the above calculation and computations in   Lemma \ref{lem2.2}. 
	\end{proof}

To simplify notations, for $a>0, b>0$ and integer $n\geq 1$, we shall denote 
\begin{equation}\label{4-6}
	\Upsilon(a,b)=\frac{(h^2+a^2)(a^2-b^2)}{2h^2 a^2},\ \ \ \Gamma_n(a, b)=-\frac{a b}{h^2}I'_{n}\left(\frac{n  b}{h}\right)K'_{n}\left(\frac{n a }{h}\right).
\end{equation}
Then the matrix $M_n(\Om, a_1, a_2)$ in Lemma \ref{Linearization-d} can be written as 
\begin{equation}\label{4-7}
	M_n(\Om, a_1, a_2)=	\begin{pmatrix}
		\Om+\Gamma_n(a_1, a_1)- \Upsilon(a_1, a_2) &\Gamma_n(a_1, a_2)\\
		-\Gamma_n(a_1, a_2) &\Om -\Gamma_n(a_2, a_2)
	\end{pmatrix}.
\end{equation}

	To apply Crandall-Rabinowitz theorem, it is requires that the linearized operator $d_{r_1,r_2}\bm{\mathcal{F}}(\Omega,0,0)$ has a one dimensional kernel. In the following proposition, we will select some special $\Om$ such that this one-dimensional properties hold.
	\begin{proposition}\label{prop4-3}
		For $a_1>a_2>0$ and $h>0$, there exists a large constant $M_h(a_1, a_2)>0$ such that, for any integer $n>M_h(a_1,a_2)$, there are two distinct angular velocities 
		\begin{equation*}
			\begin{split}
				\Om^{\pm}_n=&\frac{ \Upsilon(a_1, a_2)-\Gamma_n(a_1, a_1)+\Gamma_n(a_2, a_2)}{2}  \\
				&\pm\frac{\sqrt{\big(\Upsilon(a_1, a_2)-\Gamma_n(a_1, a_1)-\Gamma_n(a_2, a_2) \big)^2-4 \Gamma_n(a_1, a_2)^2}}{2}
			\end{split}
		\end{equation*}
		such that the matrix $M_n(\Om_n^{\pm}, a_1, a_2)$ is singular (i.e., non-invertible). 
		
		In addition, for any $n>M_h(a_1,a_2)$, the sequence $\{\Om^{+}_n\}$ is strictly increasing in $n$ and $\{\Om^{-}_n\}$ is strictly decreasing in $n$.  Furthermore, one has 
		$$\lim_{n\to+\infty} \Om^{+}_n= \Upsilon(a_1, a_2)>0,\ \ \   \lim_{n\to+\infty} \Om^{-}_n=  0.$$
	\end{proposition}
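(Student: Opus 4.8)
The plan is to reduce the whole statement to the linear algebra of the $2\times 2$ matrix $M_n(\Om,a_1,a_2)$ from \eqref{4-7}. Write $M_n(\Om,a_1,a_2)=\Om\,\mathrm{Id}-N_n$ with
\begin{equation*}
	N_n:=\begin{pmatrix}\Upsilon(a_1,a_2)-\Gamma_n(a_1,a_1)&-\Gamma_n(a_1,a_2)\\ \Gamma_n(a_1,a_2)&\Gamma_n(a_2,a_2)\end{pmatrix},
\end{equation*}
so that $M_n(\Om,a_1,a_2)$ is singular exactly when $\Om$ is an eigenvalue of $N_n$. Setting $P_n(\Om):=\det M_n(\Om,a_1,a_2)$, a monic quadratic in $\Om$, a direct computation identifies its two roots with the $\Om_n^{\pm}$ displayed in the statement and its discriminant with
\begin{equation*}
	D_n:=\bigl(\Upsilon(a_1,a_2)-\Gamma_n(a_1,a_1)-\Gamma_n(a_2,a_2)\bigr)^2-4\,\Gamma_n(a_1,a_2)^2 ,
\end{equation*}
so the proposition is entirely governed by the behaviour as $n\to+\infty$ of the three scalars $\Gamma_n(a_1,a_1),\ \Gamma_n(a_2,a_2),\ \Gamma_n(a_1,a_2)$ of \eqref{4-6}.

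The next step is to record the required asymptotics. Since $\Gamma_n(a,a)=\tfrac{a^2}{h^2}\bigl(-I'_n(na/h)K'_n(na/h)\bigr)$, Lemmas \ref{lemma3.4}–\ref{lemma3.5} give two-sided bounds forcing $\Gamma_n(a,a)=\tfrac{\sqrt{h^2+a^2}}{2hn}\bigl(1+O(1/n)\bigr)\to0$, and Proposition \ref{prop3-6} gives that $n\mapsto\Gamma_n(a,a)$ is strictly decreasing once $n$ is large. I also need a quantitative lower bound on the increments $\delta_n^{(a)}:=\Gamma_n(a,a)-\Gamma_{n+1}(a,a)>0$: this comes from the quantitative content of the proof of Proposition \ref{prop3-6}, since there the function $f_z(\nu)$ in \eqref{f} satisfies $f_z(\nu)\sim h_z'(0)\,\nu^2$ with $h_z'(0)\ge\tfrac78$, while the auxiliary denominators in \eqref{Bessel1}–\eqref{Bessel2} are of order $n$, so that the gap between the upper bound there for $-I'_n(nz)K'_n(nz)$ and the lower bound for $-I'_{n-1}((n-1)z)K'_{n-1}((n-1)z)$ is $\ge c(z)\,n^{-2}$; hence $\delta_n^{(a)}\ge c(a,h)\,n^{-2}$ for $n$ large. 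For the off-diagonal term, with $z_1=a_1/h>z_2=a_2/h$ I factor $\Gamma_n(a_1,a_2)=\tfrac{a_1a_2}{h^2}\,\lvert I'_n(nz_2)K'_n(nz_2)\rvert\cdot\tfrac{\lvert K'_n(nz_1)\rvert}{\lvert K'_n(nz_2)\rvert}$; the first factor is $O(1/n)$ as above, and since $K_n$ is convex with $K'_n<0$ and $C(K_n(x))\le\sqrt{(n-1)^2+x^2}+1$, the Bessel ODE gives $\tfrac{d}{dx}\log\lvert K'_n(x)\rvert=K''_n/K'_n\le-1$ on $[nz_2,nz_1]$ for $n$ large, whence $\Gamma_n(a_1,a_2)\le \tfrac{C}{n}e^{-n(z_1-z_2)}$. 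Finally $\Upsilon(a_1,a_2)=\tfrac{(h^2+a_1^2)(a_1^2-a_2^2)}{2h^2a_1^2}>0$ because $a_1>a_2$.

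Granting these, $D_n\to\Upsilon(a_1,a_2)^2>0$, so there is $M_h(a_1,a_2)$ such that for every integer $n>M_h(a_1,a_2)$ one has $D_n>0$ and $\Upsilon(a_1,a_2)-\Gamma_n(a_1,a_1)-\Gamma_n(a_2,a_2)>\tfrac12\Upsilon(a_1,a_2)$; then $P_n$ has the two distinct real roots $\Om_n^-<\Om_n^+$ of the stated form, and since $\sqrt{D_n}\le\Upsilon(a_1,a_2)-\Gamma_n(a_1,a_1)-\Gamma_n(a_2,a_2)$ we get $\Om_n^+\le\Upsilon(a_1,a_2)-\Gamma_n(a_1,a_1)$, hence (using $P_n(0)=\Om_n^+\Om_n^-$) also $\Om_n^-\ge\Gamma_n(a_2,a_2)>0$. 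Passing to the limit in $\Om_n^++\Om_n^-=\Upsilon(a_1,a_2)-\Gamma_n(a_1,a_1)+\Gamma_n(a_2,a_2)$ and $\Om_n^+\Om_n^-=\Gamma_n(a_2,a_2)\bigl(\Upsilon(a_1,a_2)-\Gamma_n(a_1,a_1)\bigr)+\Gamma_n(a_1,a_2)^2$ yields $\Om_n^+\to\Upsilon(a_1,a_2)$ and $\Om_n^-\to0$.

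For the monotonicity I would test $P_{n+1}$ at the roots of $P_n$, using the identity
\begin{equation*}
	P_{n+1}(\Om)-P_n(\Om)=-\delta_n^{(a_2)}\bigl(\Upsilon(a_1,a_2)-\Gamma_n(a_1,a_1)-\Om\bigr)-\delta_n^{(a_1)}\bigl(\Om-\Gamma_{n+1}(a_2,a_2)\bigr)+\Gamma_{n+1}(a_1,a_2)^2-\Gamma_n(a_1,a_2)^2 .
\end{equation*}
At $\Om=\Om_n^-$ one has $\Upsilon(a_1,a_2)-\Gamma_n(a_1,a_1)-\Om_n^-\ge\tfrac12\Upsilon(a_1,a_2)$ and $\Om_n^--\Gamma_{n+1}(a_2,a_2)\ge\delta_n^{(a_2)}>0$; at $\Om=\Om_n^+$ one has $\Upsilon(a_1,a_2)-\Gamma_n(a_1,a_1)-\Om_n^+\ge0$ and $\Om_n^+-\Gamma_{n+1}(a_2,a_2)\ge\tfrac12\Upsilon(a_1,a_2)$. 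In both cases the first two terms sum to $\le-c\,n^{-2}$ while the last is $O(e^{-cn})$, so $P_{n+1}(\Om_n^{\pm})<0$ for all $n>M_h(a_1,a_2)$ (after enlarging $M_h$). Since $P_{n+1}$ is a monic quadratic with real roots $\Om_{n+1}^-<\Om_{n+1}^+$, this forces $\Om_{n+1}^-<\Om_n^{\pm}<\Om_{n+1}^+$, and combined with $\Om_n^-\to0<\Upsilon(a_1,a_2)\leftarrow\Om_n^+$ (which makes the cross inequalities automatic for $n$ large) we conclude $\Om_{n+1}^-<\Om_n^-$ and $\Om_n^+<\Om_{n+1}^+$, i.e. the stated monotonicity, together with the limits already computed. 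The main obstacle is the quantitative Bessel analysis in the second paragraph — the lower bound $\delta_n^{(a)}\gtrsim n^{-2}$ on the increments and the exponential smallness of $\Gamma_n(a_1,a_2)$; once these are in place the rest is elementary algebra and limits, and the argument runs in parallel to (and can be packaged with) the proof of Theorem \ref{thm-doubly}.
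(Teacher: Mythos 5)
Your argument is correct, but it reaches the conclusion by a genuinely different route than the paper. The paper's proof imports the uniform (Debye) asymptotic expansions \eqref{4-8} for $I_n'(nz)$ and $K_n'(nz)$ from an external reference, deduces the expansion \eqref{4-9} $\Gamma_n(a,b)=(e^{-\mu_0})^n(\mu_1/n+\mu_2/n^2+O(n^{-3}))$ with $\mu_0=0$ iff $a=b$, and then expands $\Om_n^{\pm}$ themselves to accuracy $O(n^{-3})$ so that the differences $\Om_{n+1}^{\pm}-\Om_n^{\pm}=\pm\mu_1/(n(n+1))+O(n^{-3})$ can be read off directly. You instead stay entirely within the paper's own toolkit: the Segura bounds of Lemmas \ref{lemma3.4}--\ref{lemma3.5} give the leading order $\Gamma_n(a,a)\sim\frac{\sqrt{h^2+a^2}}{2hn}$, a quantitative increment bound $\delta_n^{(a)}\gtrsim n^{-2}$ is extracted from the function $f_z$ of \eqref{f} (indeed the gap between the bounds in \eqref{Bessel1}--\eqref{Bessel2} equals $f_z(\nu)/(z^2(\nu-1)^2D_1D_2)$ with $f_z(\nu)\gtrsim\nu^2$ and $D_1D_2\lesssim\nu^2$), the exponential smallness of the off-diagonal $\Gamma_n(a_1,a_2)$ comes from integrating $\frac{d}{dx}\log|K_n'(x)|=K_n''/K_n'\le-1$ over $[na_2/h,na_1/h]$ via the Bessel ODE, and monotonicity is obtained by root interlacing, i.e.\ showing $P_{n+1}(\Om_n^{\pm})<0$. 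I checked the key steps: your identity for $P_{n+1}-P_n$ is exact, the sign analysis at $\Om_n^{\pm}$ is consistent with the bounds $\Gamma_n(a_2,a_2)\le\Om_n^-$ and $\Om_n^+\le\Upsilon(a_1,a_2)-\Gamma_n(a_1,a_1)$ you derive from Vieta and $\sqrt{D_n}\le\Upsilon-\Gamma_n(a_1,a_1)-\Gamma_n(a_2,a_2)$, and the error term $\Gamma_{n+1}(a_1,a_2)^2-\Gamma_n(a_1,a_2)^2=O(e^{-cn})$ is indeed negligible against $n^{-2}$. What each approach buys: the paper's is shorter once the Debye expansions are granted and yields the precise leading term of the increments; yours avoids the external asymptotic expansion and reuses Proposition \ref{prop3-6}'s machinery, at the cost of having to make that proposition's proof quantitative (the paper only states the monotonicity qualitatively) and of adding the separate logarithmic-derivative argument for $\Gamma_n(a_1,a_2)$ --- these two items are sketched rather than fully written out, but both are sound and extractable, so there is no gap in substance.
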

	\begin{proof}
		We need  the following expansions for modified Bessel functions from \cite{KuiOku}. 
		\begin{align}\label{4-8}
			I'_n(n z)&=\frac{1}{\sqrt{2\pi n t}} \frac{e^{n \eta}}{z} \left(1+\frac{\nu_1}{n}+\frac{\nu_2}{n^2}+\cdots\right)\\
			K'_n(n z)&=-\sqrt{\frac{t }{2\pi n }} \frac{e^{-n \eta}}{z} \left(1-\frac{\nu_1}{n}+\frac{\nu_2}{n^2}+\cdots\right),\nonumber
		\end{align}
		where we have used the following notations in \eqref{4-8}: $$ t=\frac{1}{\sqrt{1+z^2}},\ \eta= t^{-1}+\ln \frac{z}{1+t^{-1}},\ \nu_1=\frac{7t^3-9t}{24},\ \nu_2=\frac{-455 t^6+594 t^4-135 t^2}{1152}.$$
		By the above expansions and noticing that $\eta$ is strictly increasing in $z>0$, for $a\geq b>0, h>0$ and large $n$, we find
		\begin{equation}\label{4-9}
			\begin{split}
					0>\Gamma_n(a,b)=\left(e^{-\mu_0(a, b, h)}\right)^n\left(\frac{\mu_1(a, b, h)}{n}+\frac{\mu_2(a, b, h)}{n^2}+O(n^{-3})\right),  
			\end{split}
		\end{equation}
		where $\mu_0(a, b, h)$, $\mu_1(a, b, h)>0$ and $\mu_2(a, b, h)$ are three constants depending on $a, b, h$. In addition,  the constant 
		$$\mu_0(a, b, h)
		\begin{cases}
			=0, &\text{if}\  a=b,\\
			>0, &\text{if}\   a>b.
		\end{cases}$$
		
		\smallskip
		
		  By the expression of $M_n$ in  \eqref{4-7} and direct computation, we get 
		$$\mathrm{det}(M_n(\Om, a_1, a_2))=\Om^2+ \mathcal B_n(a_1, a_2) \Om+\mathcal C_n(a_1,a_2),$$
		where the constants are given by 
		$$ \mathcal B_n(a_1, a_2)=-\Upsilon(a_1, a_2)+\Gamma_n(a_1, a_1)-\Gamma_n(a_2, a_2),$$ and $$\mathcal C_n(a_1,a_2)= \Gamma_n(a_2, a_2)\big( \Upsilon(a_1, a_2)-\Gamma_n(a_1, a_1)\big)+\Gamma_n(a_1, a_2)^2.$$
		We aim to finding $\Om$ such that $\mathrm{det}(M_n(\Om, a_1, a_2))=0$. For this purpose, we compute the discriminant 
		\begin{equation}\label{4-10}
			\begin{split}
						\Delta_n&:= \mathcal B_n(a_1, a_2)^2-4  \mathcal C_n(a_1, a_2)\\
						&=( \Upsilon(a_1, a_2)-\Gamma_n(a_1, a_1)-\Gamma_n(a_2, a_2))^2-4\Gamma_n(a_1, a_2)^2.
			\end{split}
		\end{equation}
		In view of the expansion \eqref{4-9}, we find 
		$$\lim_{n\to +\infty} \Delta_n= \Upsilon(a_1, a_2)^2>0,$$ 
		which implies that $\Delta_n>0$ for $n$ sufficiently large. Therefore, for large $n$, we get two different solutions $\Om_n^{\pm}$ for the equation $\mathrm{det}(M_n(\Om, a_1, a_2))=0$ with 
			\begin{equation*}
			\Om^{\pm}_n=\frac{ \Upsilon(a_1, a_2)-\Gamma_n(a_1, a_1)+\Gamma_n(a_2, a_2)\pm \sqrt{ \Delta_n}}{2}.  
		\end{equation*}
		
		For large $n$, by using the expansion \eqref{4-9} and the fact that $\mu_0(a_1, a_2, h)>0$ due to $a_1>a_2$, we can verify $$\Gamma_n(a_1, a_2) =O\left(\frac{(e^{-\mu_0(a_1, a_2, h)})^n}{n^2}\right)=O\left(\frac{1}{n^3}\right), $$ and hence by Taylor's expansion, we get 
		$$\sqrt{ \Delta_n}=\Upsilon(a_1, a_2)-\Gamma_n(a_1, a_1)-\Gamma_n(a_2, a_2)+O\left(\frac{1}{n^3}\right).$$
		Therefore, by the expression of $\Om_n^{\pm}$, we conclude
			\begin{equation*}
			\begin{split}
			\Om_n^+&=\Upsilon(a_1, a_2)-\Gamma_n(a_1, a_1)+O\left(\frac{1}{n^3}\right),\\
			\Om_n^-&=\Gamma_n(a_2, a_2)+O\left(\frac{1}{n^3}\right).
			\end{split}
		\end{equation*}

		Then, by using \eqref{4-9} again, we obtain the following identities
		\begin{equation*}
			\begin{split}
				\Om_{n+1}^+-\Om_n^+&=\frac{ \mu_1(a_1, a_1, h)}{(n+1)n}+O\left(\frac{1}{n^3}\right),\\
				\Om_{n+1}^--\Om_n^-&=-\frac{ \mu_1(a_2, a_2, h)}{(n+1)n}+O\left(\frac{1}{n^3}\right).
			\end{split}
		\end{equation*}
		This proves the desired monotonic proprieties of $\Om_n^{\pm}$ in $n$ for $n$ sufficiently large. The proof of the proposition is thus finished.
	\end{proof}

	In order to finish our proof of Theorem \ref{thm-doubly} by applying  Crandall-Rabinowitz theorem (Theorem \ref{CR}), we only need to check that the last prerequisites of Crandall-Rabinowitz theorem is satisfied, which will be done in the following proposition.
	\begin{proposition}
		Let $a_1>a_2>0$ and $M_h(a_1,a_2)>0$ be as in Proposition \ref{prop4-3}. Then for all integer $m>M_h(a_1,a_2)$ and $\Om_m^\pm$ defined in Proposition \ref{prop4-3}, the kernel of the linearized operator
		\begin{equation*}
			d_{r_1,r_2}\bm{\mathcal{F}}(\Om_m^\pm,0,0):\mathbb{X}_m\times\mathbb{X}_m\to \mathbb{Y}_m\times\mathbb{Y}_m
		\end{equation*}
		is one-dimensional and generated by $(r_0^1,r_0^2)$, where
		\begin{equation*}
			\begin{pmatrix}
				r_0^1\vspace{0.25em}\\
				r_0^2
			\end{pmatrix}:\theta\mapsto
			\begin{pmatrix}
				\Om_m^\pm-\Gamma_m(a_2,a_2)\vspace{0.25em}\\
				\Gamma_m(a_1,a_2)
			\end{pmatrix}\cos(m\theta).
		\end{equation*}

		Moreover, the range of linearized operator $d_{r_1,r_2}\bm{\mathcal{F}}(\Om_m^\pm,0,0)$ is closed and of co-dimension one.

		Furthermore, the transversality assumption is satisfied, that is, 
		\begin{equation*}
			\partial_\Om d_{r_1,r_2}\bm{\mathcal{F}}(\Om_m^\pm,0,0)[r_0^1,r_0^2]\notin {\rm Range}\left(d_{r_1,r_2}\bm{\mathcal{F}}(\Om_m^\pm,0,0)\right).
		\end{equation*}
	\end{proposition}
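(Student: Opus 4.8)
The plan is to exploit the block-diagonal structure of the linearized operator furnished by Lemma \ref{Linearization-d}: for $f_k(\theta)=\sum_{n\geq1}f_n^{(k)}\cos(nm\theta)$ the operator $d_{r_1,r_2}\bm{\mathcal{F}}(\Om,0,0)$ acts on the $n$-th Fourier mode through the $2\times2$ matrix $M_{nm}(\Om,a_1,a_2)$ of \eqref{4-7}, so the kernel, range and transversality statements reduce to linear algebra for these matrices, fed by the spectral information of Proposition \ref{prop4-3}. Together with the regularity already established in Lemma \ref{lem4-1}, these three facts supply the remaining hypotheses of the Crandall--Rabinowitz theorem (Theorem \ref{CR}) and hence yield Theorem \ref{thm-doubly}.

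\emph{Kernel.} A pair $(f_1,f_2)$ lies in $\ker d_{r_1,r_2}\bm{\mathcal{F}}(\Om_m^\pm,0,0)$ if and only if $M_{nm}(\Om_m^\pm,a_1,a_2)(f_n^{(1)},f_n^{(2)})^{T}=0$ for every $n\geq1$. For $n=1$, $M_m(\Om_m^\pm,a_1,a_2)$ is singular by Proposition \ref{prop4-3}, and its off-diagonal entry $\Gamma_m(a_1,a_2)$ is nonzero for large $m$ by the expansion \eqref{4-9}, hence it has rank exactly one; reading off its second row in \eqref{4-7}, its kernel is the line through $(\Om_m^\pm-\Gamma_m(a_2,a_2),\,\Gamma_m(a_1,a_2))^{T}$, which is exactly the vector generating $(r_0^1,r_0^2)$. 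For $n\geq2$ I must show $M_{nm}(\Om_m^\pm,a_1,a_2)$ is invertible, i.e. $\Om_m^\pm\notin\{\Om_{nm}^{+},\Om_{nm}^{-}\}$; since $m>M_h(a_1,a_2)$ forces $nm>M_h(a_1,a_2)$, the strict monotonicity of Proposition \ref{prop4-3} gives $\Om_m^{+}<\Om_{2m}^{+}\leq\Om_{nm}^{+}$ and $\Om_m^{-}>\Om_{2m}^{-}\geq\Om_{nm}^{-}$, while the limits $\Om_{nm}^{+}\to\Upsilon(a_1,a_2)>0$ and $\Om_{nm}^{-}\to0$ — with $M_h(a_1,a_2)$ taken large enough that $\Om_{nm}^{+}>\tfrac{1}{2}\Upsilon(a_1,a_2)>\Om_{n'm}^{-}$ for all $n,n'\geq1$ — force $\Om_m^{+}\neq\Om_{nm}^{-}$ and $\Om_m^{-}\neq\Om_{nm}^{+}$. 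Thus $(f_n^{(1)},f_n^{(2)})=0$ for $n\geq2$ and the kernel is one-dimensional, generated by $(r_0^1,r_0^2)$.

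\emph{Range.} Here I would imitate part (2) of the proof of Proposition \ref{prop3-7}. For $n\geq2$ the determinants $\det M_{nm}(\Om_m^\pm,a_1,a_2)$ never vanish and, by the Bessel expansions \eqref{4-8}--\eqref{4-9}, $M_{nm}(\Om_m^\pm,a_1,a_2)$ converges as $n\to\infty$ to a fixed invertible diagonal matrix, so $\{M_{nm}(\Om_m^\pm,a_1,a_2)^{-1}\}_{n\geq2}$ is uniformly bounded. Letting $w^\pm\in\mathbb{R}^2$ span the one-dimensional space $\mathrm{Range}\,M_m(\Om_m^\pm,a_1,a_2)$, I claim
\begin{equation*}
	\mathrm{Range}\,d_{r_1,r_2}\bm{\mathcal{F}}(\Om_m^\pm,0,0)=\Big\{(g_1,g_2)\in\mathbb{Y}_m\times\mathbb{Y}_m:\ g_k(\theta)=\sum_{n\geq1}g_n^{(k)}\sin(nm\theta),\ (g_1^{(1)},g_1^{(2)})\parallel w^\pm\Big\}.
\end{equation*}
The inclusion ``$\subset$'' is immediate from Lemma \ref{Linearization-d}; for ``$\supset$'' one solves the (solvable) rank-one equation in the $n=1$ block and sets $(f_n^{(1)},f_n^{(2)})=-\tfrac{1}{nm}M_{nm}(\Om_m^\pm,a_1,a_2)^{-1}(g_n^{(1)},g_n^{(2)})^{T}$ for $n\geq2$, the uniform bound on $M_{nm}^{-1}$ together with the Fourier/H\"older argument from the proof of Proposition \ref{prop3-7} (isolating the leading $\tfrac{1}{nm}$ factor, using $L^1\ast C^{1/2}\hookrightarrow C^{1/2}$) giving $(f_1,f_2)\in\mathbb{X}_m\times\mathbb{X}_m$ with $d_{r_1,r_2}\bm{\mathcal{F}}(\Om_m^\pm,0,0)[f_1,f_2]=(g_1,g_2)$. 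The right-hand set is evidently closed and of codimension one in $\mathbb{Y}_m\times\mathbb{Y}_m$.

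\emph{Transversality and main obstacle.} Since $M_n(\Om,a_1,a_2)=\Om\,\mathrm{Id}+(\text{matrix independent of }\Om)$, Lemma \ref{Linearization-d} gives $\partial_\Om d_{r_1,r_2}\bm{\mathcal{F}}(\Om_m^\pm,0,0)[r_0^1,r_0^2](\theta)=-m(\Om_m^\pm-\Gamma_m(a_2,a_2),\,\Gamma_m(a_1,a_2))^{T}\sin(m\theta)$, which lies in the $n=1$ block and is parallel to the kernel vector; by the description of the range it fails to be in $\mathrm{Range}\,d_{r_1,r_2}\bm{\mathcal{F}}(\Om_m^\pm,0,0)$ exactly when $\ker M_m(\Om_m^\pm,a_1,a_2)\not\subset\mathrm{Range}\,M_m(\Om_m^\pm,a_1,a_2)$, which for a rank-one $2\times2$ matrix is equivalent to nonvanishing of its trace; and $\mathrm{tr}\,M_m(\Om_m^\pm,a_1,a_2)=2\Om_m^\pm+\mathcal{B}_m(a_1,a_2)=\pm(\Om_m^{+}-\Om_m^{-})=\pm\sqrt{\Delta_m}\neq0$ since $\Delta_m>0$. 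I expect the genuinely new difficulty, compared with the simply connected Proposition \ref{prop3-7}, to be the range step: the $n=1$ block is only rank one (not invertible), so the range is cut out by a single scalar constraint on the first Fourier coefficients rather than by suppressing an entire mode, and one must carefully certify both the uniform invertibility of the higher blocks $M_{nm}(\Om_m^\pm,a_1,a_2)$ from the Bessel asymptotics and the resulting H\"older regularity of the preimage; the kernel and transversality parts are routine once Proposition \ref{prop4-3} is in hand.
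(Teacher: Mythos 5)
Your proposal is correct and follows essentially the same route as the paper: block-diagonalize on Fourier modes via Lemma \ref{Linearization-d}, use the spectral separation from Proposition \ref{prop4-3} to kill the modes $n\geq 2$, build an explicit preimage with uniformly bounded $M_{nm}^{-1}$ plus the H\"older argument of Proposition \ref{prop3-7} for the range, and check transversality on the $n=1$ block. The only (equivalent) cosmetic differences are that you describe the range via $\mathrm{Range}\,M_m$ and detect transversality through $\mathrm{tr}\,M_m(\Om_m^\pm)=\pm\sqrt{\Delta_m}\neq 0$, whereas the paper pairs against the adjoint null vector $y_0\in\ker M_m^{T}$ and computes the inner product explicitly; you also spell out the cross-comparison $\Om_m^{\pm}\neq\Om_{nm}^{\mp}$ that the paper leaves implicit.
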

	\begin{proof}
		According to the analysis in Proposition \ref{prop4-3}, we can deduce that
		\begin{equation*}
			\det \left(M_m(\Om_m^\pm,a_1,a_2)\right)=0,\quad \det\left(M_{nm}(\Om_m^\pm,a_1,a_2)\right)\neq 0, \text{ for any}\ n\geq 2.
		\end{equation*} 
		In the next, we will give a more precise form about the range of the linearized operator. We equip the Hilbert space $\mathbb{Y}_m\times\mathbb{Y}_m$ with the following $l_2$-scalar product: for any two vectors $(U_1,U_2),$ $(V_1,V_2)\in\mathbb{Y}_m\times\mathbb{Y}_m$ with the following form
		\begin{equation*}
			\begin{split}
				(U_1,U_2)=\left(\sum_{n=1}a_n\sin(nm\theta),\sum_{n=1}b_n\sin(nm\theta)\right),\\
				(V_1,V_2)=\left(\sum_{n=1}c_n\sin(nm\theta),\sum_{n=1}d_n\sin(nm\theta)\right) ,
			\end{split}
		\end{equation*} 
		there holds
		\[
			\langle(U_1,U_2),(V_1,V_2)\rangle:=\sum_{n=1}a_nc_n+b_nd_n.
		\]
		Furthermore, define
		\begin{equation}
			y_0:=\begin{pmatrix}
				\Om_m^\pm-\Gamma_m(a_2,a_2)\\
				-\Gamma_m(a_1,a_2)
			\end{pmatrix}\sin(m\theta).
		\end{equation}
		It is easy to show that 
		\begin{equation}
			(M_m(\Om_m^\pm,a_1,a_2))^T\begin{pmatrix}
				\Om_m^\pm-\Gamma_m(a_2,a_2)\\
				-\Gamma_m(a_1,a_2)
			\end{pmatrix}=0,
		\end{equation}
		where the notation $M^T$ refers to the transpose of the matrix $M$. Then for any $(f_1,f_2)\in\mathbb{X}_m\times\mathbb{X}_m$ given by
		\begin{equation*}
			f_1(\theta)=\sum_{n=1}f_n^{(1)}\cos(nm\theta),\quad f_2(\theta)=\sum_{n=2}f_n^{(2)}\cos(nm\theta),
		\end{equation*}
		for some $f_n^{(1)}$, $f_n^{(2)}\in\mathbb R$, we have 
		\begin{equation*}
			\begin{split}
				\langle d_{r_1,r_2}\bm{\mathcal{F}}(\Om_m^\pm,0,0)[f_1,f_2],y_0 \rangle&=-m\langle M_m(\Om_m^\pm,a_1,a_2)\begin{pmatrix}	f_1^{(1)}\\f_1^{(2)}	\end{pmatrix}, \begin{pmatrix}	\Om_m^\pm-\Gamma_m(a_2,a_2)\\-\Gamma_m(a_1,a_2) \end{pmatrix} \rangle\\
				&=-m\langle \begin{pmatrix}	f_1^{(1)}\\f_1^{(2)}	\end{pmatrix}, M_m^T(\Om_m^\pm,a_1,a_2)\begin{pmatrix}	\Om_m^\pm-\Gamma_m(a_2,a_2)\\-\Gamma_m(a_1,a_2) \end{pmatrix} \rangle\\
				&=0,
			\end{split}
		\end{equation*}
		which implies that $\text{Range}\left(d_{r_1,r_2}\bm{\mathcal{F}}(\Om_m^\pm,0,0)\right)\subset \text{span}^\perp(y_0)$, where
		\[
			\text{span}^\perp(y_0):=\left\{y\in \mathbb{Y}_m\times\mathbb{Y}_m: \langle y,y_0\rangle=0 \right\}.
		\]
		It is clear that $\text{span}^\perp(y_0)$ is a closed sub-space of $\mathbb{Y}_m\times\mathbb{Y}_m$ with co-dimension one. Therefore, if we can show that $\text{Range}\left(d_{r_1,r_2}\bm{\mathcal{F}}(\Om_m^\pm,0,0)\right)\supset  \text{span}^\perp(y_0)$, the range of linearized operator $d_{r_1,r_2}\bm{\mathcal{F}}(\Om_m^\pm,0,0)$ is naturally closed and of co-dimension one.
		To see this, we choose any element $y\in \text{span}^\perp(y_0)$ with the following form
		\begin{equation*}
			y=\begin{pmatrix}
				y_1^{(1)}\\y_1^{(2)}
			\end{pmatrix}\sin (m\theta)
			+\sum_{n=2}\begin{pmatrix}
				y_n^{(1)}\\y_n^{(2)}
			\end{pmatrix}\sin(nm\theta),
		\end{equation*}
		where $(y_1^{(1)},y_1^{(2)})$ satisfies $y_1^{(1)}(\Om_m^\pm-\Gamma_m(a_2,a_2))=y_1^{(2)}\Gamma_m(a_1,a_2)$ and $y_n^{(1)},y_n^{(2)}\in\mathbb{R} $ for $n\geq 2$. 
		Due to the expression of $\Delta_m$ in \eqref{4-10} and the fact that $\Delta_m>0$, as long as $m>M_h(a_1,a_2)$, it implies that 
		\begin{equation}\label{Om-Gamma noteq 0}
			\begin{split}
				\Om_m^\pm-\Gamma_m(a_2,a_2)&=\frac{1}{2}\left(\Upsilon(a_1,a_2)-\Gamma_m(a_2,a_2)-\Gamma_m(a_1,a_1)\right)\pm\frac{1}{2}\sqrt{\Delta_m}\neq 0.
			\end{split}
		\end{equation}
		Combining \eqref{Om-Gamma noteq 0} and the fact $\det (M_{nm}(\Om_m^\pm,a_1,a_2))\neq 0$ for any $n\geq 2$, 
		we can define the function $(f_1,f_2)$ by setting, for all $\theta\in\mathbb{T}$, that
		\begin{equation*}
			\begin{pmatrix}
				f_1\\f_2
			\end{pmatrix}=
			-\frac{1}{m}\begin{pmatrix}
				0\\\frac{y_1^{(2)}}{\Om_m^\pm-\Gamma_m(a_2,a_2)}
			\end{pmatrix}\cos (m\theta)
			-\sum_{n=2}\frac{1}{nm}(M_{nm}(\Om_m^\pm,a_1,a_2))^{-1}\begin{pmatrix}
				y_n^{(1)}\\y_n^{(2)}
			\end{pmatrix}\cos (nm\theta).
		\end{equation*}
		Clearly, we formally have $d_{r_1,r_2}\bm{\mathcal{F}}(\Om_m^\pm,0,0)[f_1,f_2]=y$. Noting that $M_{nm}(\Om_m^\pm,a_1,a_2)=M_{nm}(\Om_{nm}^\pm,a_1,a_2)+(\Om_m^\pm-\Om_{nm}^\pm)I$, where $I$ is the $2\times2$ identity matrix, and $|\Om_m^\pm-\Om_{nm}^\pm|\geq |\Om_m^\pm-\Om_{2m}^\pm|$, we obtain that 
		\begin{equation*}
			\left|(M_{nm}(\Om_m^\pm,a_1,a_2))^{-1}\begin{pmatrix}
				y_n^{(1)}\\y_n^{(2)}
			\end{pmatrix}\right|_{l^\infty}\lesssim |y_n^{(1)}|+|y_n^{(2)} |.
		\end{equation*}
		
		We claim that $(f_1,f_2)\in \mathbb{X}_m\times\mathbb{X}_m$. 
		In fact, we firstly estimate the $L^\infty$ norm of $(f_1,f_2)$, namely
		\begin{equation*}
			\|(f_1,f_2)\|_\infty \lesssim |y_1^{(2)}|+\sum_{n=2}\frac{|y_n^{(1)}|+|y_n^{(2)} |}{n}\lesssim \|y\|_{L^2}<\infty.
		\end{equation*}
		Next, we study the regularity of the derivative of $(f_1,f_2)$. To check that $(f_1',f_2')\in C^{1/2}$, we decompose $(f_1',f_2')$ as follows
		\begin{equation*}
			\begin{split}
				\begin{pmatrix}
					f_1'\vspace{0.25em}\\f_2'
				\end{pmatrix}=&
				\begin{pmatrix}
					0\vspace{0.25em}\\\frac{y_1^{(2)}}{\Om_m^\pm-\Gamma_m(a_2,a_2)}
				\end{pmatrix}\sin(m\theta)
				+\sum_{n=2}(M_{nm}(\Om_m^\pm,a_1,a_2))^{-1}\begin{pmatrix}
					y_n^{(1)}\vspace{0.25em}\\y_n^{(2)}
				\end{pmatrix}\sin (nm\theta)\vspace{0.5em}\\
				=&\begin{pmatrix}
					0\vspace{0.25em}\\\frac{y_1^{(2)}}{\Om_m^\pm-\Gamma_m(a_2,a_2)}
				\end{pmatrix}\sin(m\theta)
				+\sum_{n=2}\widetilde{Q_{n}}\begin{pmatrix}
					y_n^{(1)}\vspace{0.25em}\\y_n^{(2)}
				\end{pmatrix}\sin (nm\theta)+\frac{1}{\Om_{m}^\pm}\sum_{n=2}\begin{pmatrix}
					y_n^{(1)}\vspace{0.25em}\\y_n^{(2)}
				\end{pmatrix}\sin (nm\theta),
			\end{split}
		\end{equation*}
		where $\widetilde{Q_{n}}=(M_{nm}(\Om_m^\pm,a_1,a_2))^{-1}-\frac{1}{\Om_m^\pm}I$.  Similar to the analysis in proof of Proposition \ref{prop3-7}, we obtain that $(f_1',f_2')\in C^{1/2}$.
		

		It remains to show that the transversality assumption holds. Thanks to $\det (M_m(\Om_m^\pm,a_1,a_2))=0$, we obtain that
		\begin{equation*}
			\begin{split}
				&\langle \partial_\Om d_{r_1,r_2}\bm{\mathcal{F}}(\Om_m^\pm,0,0)[r_0^1,r_0^2],y_0\rangle\\
				&=-m \{(\Om_m^\pm-\Gamma_m(a_2,a_2))^2-\Gamma_m^2(a_1,a_2)\}\\
				&=-m(\Om_m^\pm-\Gamma_m(a_2,a_2))(2\Om_m^\pm-\Gamma_m(a_2,a_2)+\Gamma_m(a_1,a_1)-\Upsilon(a_1,a_2)).
			\end{split}
		\end{equation*}
		Thanks to the expression of $\Om_m^\pm$ given in Proposition \ref{prop4-3} and the fact that $\Delta_m>0$, as long as $m>M_h(a_1,a_2)$, it shows that $2\Om_m^\pm-\Gamma_m(a_2,a_2)+\Gamma_m(a_1,a_1)-\Upsilon(a_1,a_2)\neq 0.$ Due to \eqref{Om-Gamma noteq 0}, the term $\Om_m^\pm-\Gamma_m(a_2,a_2)\neq 0$. 
		Therefore, we conclude that 
		\begin{equation*}
			\partial_\Om d_{r_1,r_2}\bm{\mathcal{F}}(\Om_m^\pm,0,0)[r_0^1,r_0^2]\notin {\rm Range } (d_{r_1,r_2}\bm{\mathcal{F}}(\Om_m^\pm,0,0)[r_0^1,r_0^2]).
		\end{equation*}
		The proof is thus finished.
	\end{proof}

\appendix
\section{Crandall-Rabinowitz Theorem}\label{apa}
We recall in this appendix the celebrated Crandall-Rabinowitz theorem   used in the proof of  our main theorems.
 \begin{theorem}\label{CR}
     (Crandall-Rabinowitz \cite{CRANDALL1971321}) Let $\mathbb{X}$ and $\mathbb{Y}$ be two Banach spaces. Consider a neighborhood $\mathbb{V}\subset \mathbb{X}$ of 0 and a function $\mathcal{F}$ such that
     \begin{equation*}
         \mathcal{F}:\mathbb{R}\times \mathbb{V}\to \mathbb{Y}.
     \end{equation*}
Assume that $\mathcal{F}$ satisfies the following assumptions.
\begin{itemize}
    \item [(1)] Existence of trivial branch:
    \begin{equation*}
        \mathcal{F}(\Omega,0)=0,~~\text{for all}~~\Omega\in\mathbb{R}.
    \end{equation*}
    \item [(2)] Regularity: $\mathcal{F}$ is regular in the sense that $\partial_\Omega\mathcal{F}$, $d_x\mathcal{F}$ and $\partial_\Omega d_x\mathcal{F}$ exist and are continuous.
    \item [(3)] Fredholm property: The kernel of $d_x\mathcal{F}(0,0)$ is of dimension one, i.e., there is a $x_0\in\mathbb{V}$ such that 
    \begin{equation*}
        \ker(d_x\mathcal{F}(0,0))={\rm span}\left\{x_0\right\},
    \end{equation*}
    and the ${\rm Range}(d_x\mathcal{F}(0,0))$ is closed and of co-dimension one.
    \item [(4)] Transversality:
    \begin{equation*}
        \partial_\Omega d_x\mathcal{F}(0,0)[x_0]\notin {\rm Range}(d_x\mathcal{F}(0,0)).
    \end{equation*}
\end{itemize}
Then, denoting $\mathcal{X}$ any complement of $\ker(d_x\mathcal{F}(0,0))$ in $\mathbb{X}$, there exist a neighborhood $U$ of $(0,0)$, an interval $(-s_0, s_0)$ with $s_0>0$ a real number and continuous functions
\begin{equation*}
    \Psi:(-s_0, s_0)\to\mathbb{R}\quad\text{and}\quad\Phi:(-s_0,s_0)\to\mathcal{X},
\end{equation*}
such that 
\begin{equation*}
    \Psi(0)=0,~~\Phi(0)=0
\end{equation*}
and 
\begin{equation*}
    \left\{(\Omega,x)\in U:~\mathcal{F}=0\right\}=\left\{(\Psi(s),sx_0+s\Phi(s)):|s|<s_0\right\}\cup\left\{(\Omega,0)\in U\right\}.
\end{equation*}
 \end{theorem}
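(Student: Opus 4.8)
The plan is to prove Theorem~\ref{CR} by the classical Lyapunov--Schmidt reduction, combined with a rescaling (a ``blow-up'') in the kernel direction $x_0$ that removes the trivial branch and turns the bifurcation equation into a genuinely scalar one. First I would fix, once and for all, two topological direct-sum decompositions: $\mathbb{X}=\operatorname{span}\{x_0\}\oplus\mathcal{X}$ on the domain side, and $\mathbb{Y}=\operatorname{Range}(d_x\mathcal{F}(0,0))\oplus\mathbb{Y}_1$ with $\dim\mathbb{Y}_1=1$ on the target side, the latter being available precisely because hypothesis~(3) asserts the range is closed of codimension one; let $Q\colon\mathbb{Y}\to\operatorname{Range}(d_x\mathcal{F}(0,0))$ be the associated bounded projection. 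After the substitution $x=s(x_0+z)$ with $s\in\mathbb{R}$, $z\in\mathcal{X}$, I would introduce the rescaled map
\begin{equation*}
	\widetilde{\mathcal{F}}(\Omega,s,z):=\tfrac{1}{s}\,\mathcal{F}\big(\Omega,s(x_0+z)\big)=\int_0^1 d_x\mathcal{F}\big(\Omega,\tau s(x_0+z)\big)[x_0+z]\,d\tau\qquad(s\neq 0),
\end{equation*}
the integral identity using $\mathcal{F}(\Omega,0)=0$ from hypothesis~(1); extending $\widetilde{\mathcal{F}}$ by this last formula to $s=0$ gives $\widetilde{\mathcal{F}}(\Omega,0,z)=d_x\mathcal{F}(\Omega,0)[x_0+z]$ and $\widetilde{\mathcal{F}}(0,0,0)=d_x\mathcal{F}(0,0)x_0=0$. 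For $s\neq 0$ the equations $\widetilde{\mathcal{F}}(\Omega,s,z)=0$ and $\mathcal{F}(\Omega,s(x_0+z))=0$ are equivalent.

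The key preliminary step is to show that $\widetilde{\mathcal{F}}$ is continuous near $(0,0,0)$ and has continuous partial derivatives $\partial_z\widetilde{\mathcal{F}}$ and $\partial_\Omega\widetilde{\mathcal{F}}$ there. Indeed $\partial_z\widetilde{\mathcal{F}}(\Omega,s,z)=d_x\mathcal{F}(\Omega,s(x_0+z))$ for all $s$, continuous by hypothesis~(2); and using the identity $\partial_\Omega\mathcal{F}(\Omega,0)=0$ (from~(1)) together with the continuity of $\partial_\Omega d_x\mathcal{F}$ (which forces the mixed partials to agree) one obtains $\partial_\Omega\widetilde{\mathcal{F}}(\Omega,s,z)=\int_0^1\partial_\Omega d_x\mathcal{F}(\Omega,\tau s(x_0+z))[x_0+z]\,d\tau$, also continuous; note that no derivative in $s$ is ever needed. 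With this regularity in hand I would run Lyapunov--Schmidt on $\widetilde{\mathcal{F}}=0$. Since $\partial_z\widetilde{\mathcal{F}}(0,0,0)=d_x\mathcal{F}(0,0)\big|_{\mathcal{X}}$ is a bounded bijection onto $\operatorname{Range}(d_x\mathcal{F}(0,0))$ by the choice of decompositions, hence an isomorphism, the implicit function theorem solves the range equation $Q\widetilde{\mathcal{F}}(\Omega,s,z)=0$ by a map $z=z(\Omega,s)$ that is continuous in $(\Omega,s)$, continuously differentiable in $\Omega$, with $z(0,0)=0$. Substituting into the complementary equation and identifying $\mathbb{Y}_1\cong\mathbb{R}$ leaves a scalar equation $\beta(\Omega,s):=(I-Q)\widetilde{\mathcal{F}}(\Omega,s,z(\Omega,s))=0$ with $\beta(0,0)=0$. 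Differentiating in $\Omega$ and observing that the term carrying the factor $\partial_z\widetilde{\mathcal{F}}(0,0,0)=d_x\mathcal{F}(0,0)$ lands in $\operatorname{Range}(d_x\mathcal{F}(0,0))$ and is killed by $I-Q$, I get $\partial_\Omega\beta(0,0)=(I-Q)\,\partial_\Omega d_x\mathcal{F}(0,0)[x_0]\neq 0$ — this is exactly the transversality hypothesis~(4) — so $\partial_\Omega\beta(0,0)$ is invertible on $\mathbb{Y}_1$, and a second application of the implicit function theorem yields $s_0>0$ and a continuous $\Psi\colon(-s_0,s_0)\to\mathbb{R}$ with $\Psi(0)=0$ and $\beta(\Psi(s),s)\equiv 0$.

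It then remains to assemble the branch and prove the stated description of the solution set. Setting $\Phi(s):=z(\Psi(s),s)$, which is continuous with $\Phi(0)=0$, the pair $(\Psi(s),\,sx_0+s\Phi(s))$ solves $\mathcal{F}=0$ for $0<|s|<s_0$ and limits to $(0,0)$ as $s\to 0$. For the converse, given a solution $(\Omega,x)$ in a small neighborhood $U$ of $(0,0)$, write $x=sx_0+w$ with $w\in\mathcal{X}$: the range equation $Q\mathcal{F}(\Omega,sx_0+w)=0$ forces $w$ to coincide with the unique small Lyapunov--Schmidt solution $\widehat{w}(\Omega,sx_0)$, which vanishes at $s=0$ and, because its derivative in the $x_0$-direction vanishes at the origin (as $Q\,d_x\mathcal{F}(0,0)x_0=0$), satisfies $|w|=o(|s|)$ for $(\Omega,s)$ small; hence if $x\neq 0$ then $s\neq 0$, the ratio $z:=w/s$ is small, so $(\Omega,s,z)$ lies in the domain of the reduced problem and therefore $\Omega=\Psi(s)$, $x=sx_0+s\Phi(s)$; while $x=0$ is precisely the trivial branch. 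Taking $\mathcal{X}$ as the complement named in the statement, this gives $\{\mathcal{F}=0\}\cap U=\{(\Psi(s),sx_0+s\Phi(s)):|s|<s_0\}\cup\{(\Omega,0)\in U\}$ together with $\Psi(0)=\Phi(0)=0$.

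The step I expect to be the main obstacle is the regularity of the rescaled map $\widetilde{\mathcal{F}}$ at $s=0$: one must verify that dividing $\mathcal{F}(\Omega,s(x_0+z))$ by $s$ does not cost enough smoothness to block the two implicit function theorem arguments. This is the heart of the Crandall--Rabinowitz construction, and it is exactly here that the precise form of hypothesis~(2) is used — it is the continuity of $\partial_\Omega d_x\mathcal{F}$, rather than merely of $d_x\mathcal{F}$, that makes $\partial_\Omega\widetilde{\mathcal{F}}$ continuous across $s=0$; without it one would be left with a reduced bifurcation equation too rough to differentiate in $\Omega$. Everything else — the two direct-sum decompositions, the fundamental-theorem-of-calculus identity for $\mathcal{F}(\Omega,s(x_0+z))/s$, and the bookkeeping identifying transversality with the nondegeneracy of the scalar equation — is routine once that point is in place.
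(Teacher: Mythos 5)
The paper does not prove this theorem at all: it is quoted verbatim in Appendix \ref{apa} as a classical result with a citation to Crandall--Rabinowitz \cite{CRANDALL1971321}, so there is no in-paper argument to measure you against. Your proof is a correct and complete rendition of the standard argument, and it contains all the genuinely delicate points: the rescaling $x=s(x_0+z)$ with the fundamental-theorem-of-calculus identity $\mathcal{F}(\Omega,s(x_0+z))/s=\int_0^1 d_x\mathcal{F}(\Omega,\tau s(x_0+z))[x_0+z]\,d\tau$ that extends the map continuously across $s=0$; the observation that $\partial_\Omega$ of the rescaled map is obtained by differentiating under the integral, which is exactly where the continuity of $\partial_\Omega d_x\mathcal{F}$ from hypothesis (2) is indispensable; and the $|w|=o(|s|)$ estimate (from $\partial_s\widehat{w}(0,0)=-[d_x\mathcal{F}(0,0)|_{\mathcal{X}}]^{-1}Q\,d_x\mathcal{F}(0,0)[x_0]=0$) needed to show every nontrivial small solution falls into the domain of the rescaled problem, which is the step most often glossed over. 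The one stylistic difference from the original Crandall--Rabinowitz proof is that you perform the reduction in two stages (solve the range equation for $z$ by the implicit function theorem, then the one-dimensional bifurcation equation for $\Omega$), whereas the original applies the implicit function theorem once to the full rescaled map, using that $(\lambda,w)\mapsto\lambda\,\partial_\Omega d_x\mathcal{F}(0,0)[x_0]+d_x\mathcal{F}(0,0)[w]$ is an isomorphism of $\mathbb{R}\times\mathcal{X}$ onto $\mathbb{Y}$ by hypotheses (3) and (4) combined. The two routes are equivalent; yours makes the separate roles of the Fredholm and transversality hypotheses more visible, at the cost of tracking the projection $Q$ through an extra composition.
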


\section{Continuity of some integral operators}\label{apb}
	We  recall in this appendix the matrix $T_ y$ associated with Green's expansion in Theorem \ref{decom}, which appears in the  Cholesky decomposition is defined by
	\begin{equation*}
		(T_ y)^{-1}(T_ y)^{-t}=K_H( y),
	\end{equation*}
	and recall $R(r)$ appearing in the parameterization of vortex patch defined by
	\begin{equation*}
		R(r)(\theta):=\sqrt{a^2+2r(\theta)}.
	\end{equation*}
	To verify the regularity of the functional $\mathcal F$, we require estimates for certain integral operators. The following lemmas play a crucial role in various parts of the proof of regularity of $\mathcal{F}$.
	\begin{lemma}\label{lem B1}
		Let $\alpha\in[0,1)$ and consider a measurable complex-valued function $K$, defined on $\mathbb{T}\times\mathbb{T} ~\setminus\left\{(\theta,\theta),\theta\in\mathbb{T}\right\}$. Assume that there exists a constant $C>0$ such that
		\begin{equation*}
			|K(\theta,\phi)|\leq\frac{C}{\left|\sin\left(\frac{\theta-\phi}{2}\right)\right|^\alpha},\,~\text{for all}~\, \theta\neq\phi\in\mathbb{T},
		\end{equation*}
		and that $\theta\mapsto K(\theta,\phi)$ is differentiable with
		\begin{equation*}
			|\partial_\theta K(\theta,\phi)|\leq\frac{C}{\left|\sin\left(\frac{\theta-\phi}{2}\right)\right|^{1+\alpha}},~\text{for all}~\, \theta\neq\phi\in\mathbb{T}.
		\end{equation*}
		Then the operator $\mathcal{T}$ defined by
		\begin{equation*}
			f\in L^\infty(\mathbb{T})\mapsto \mathcal{T}f(\theta):=\int_{0}^{2\pi}K(\theta,\phi)f(\phi)d\phi
		\end{equation*}
		belongs to $C^\alpha(\mathbb{T})$ and satisfies
		\begin{equation*}
			\|\mathcal{T}f\|_{C^\alpha}\leq CC'(\alpha)\|f\|_{L^\infty},
		\end{equation*}
		for some constant $C'(\alpha)$.
 	\end{lemma}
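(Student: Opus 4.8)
The plan is to treat the sup‑norm bound and the Hölder seminorm bound separately, the latter by the classical ``near/far'' splitting of the integral around the diagonal. First I would record the elementary comparison $\tfrac1\pi\,|\theta-\phi|_{\mathbb T}\le\big|\sin\big(\tfrac{\theta-\phi}{2}\big)\big|\le\tfrac12\,|\theta-\phi|_{\mathbb T}$, valid for $|\theta-\phi|_{\mathbb T}\le\pi$ (Jordan's inequality). Since $\alpha<1$, the hypothesis $|K(\theta,\phi)|\le C\big|\sin(\tfrac{\theta-\phi}{2})\big|^{-\alpha}$ then makes $\phi\mapsto K(\theta,\phi)$ integrable with $L^1$‑norm bounded independently of $\theta$, so $\|\mathcal{T}f\|_{L^\infty}\le C\,C_0(\alpha)\|f\|_{L^\infty}$. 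This settles the sup‑norm part of $\|\mathcal{T}f\|_{C^\alpha}$ and, combined with the next step, the borderline case $\alpha=0$ (where $C^\alpha=C(\mathbb T)$ and continuity follows from the near/far estimate below tending to $0$).

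For the Hölder seminorm, fix $\theta_1\neq\theta_2\in\mathbb T$ and set $\delta:=|\theta_1-\theta_2|_{\mathbb T}$; for $\delta$ bounded below the bound follows from Step~1, so I may assume $\delta$ is small. I would write $\mathcal{T}f(\theta_1)-\mathcal{T}f(\theta_2)=\mathrm I+\mathrm{II}$, where $\mathrm I$ is the integral over the collar $A_\delta:=\{\phi:|\phi-\theta_1|_{\mathbb T}\le 2\delta\}$ and $\mathrm{II}$ the integral over its complement. For $\mathrm I$ I would use the size bound on $K$ at both $\theta_1$ and $\theta_2$ and the above comparison, obtaining $|\mathrm I|\le C\|f\|_{L^\infty}\!\int_{A_\delta}\big(|\sin(\tfrac{\theta_1-\phi}{2})|^{-\alpha}+|\sin(\tfrac{\theta_2-\phi}{2})|^{-\alpha}\big)\,d\phi\lesssim C\|f\|_{L^\infty}\,\delta^{1-\alpha}$. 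For $\mathrm{II}$ I would use the mean value theorem along the short arc $\gamma$ joining $\theta_1$ to $\theta_2$ together with the gradient hypothesis: for $\phi\notin A_\delta$ every $t\in\gamma$ satisfies $|t-\phi|_{\mathbb T}\ge\tfrac12|\theta_1-\phi|_{\mathbb T}$, hence $|K(\theta_1,\phi)-K(\theta_2,\phi)|\le\delta\sup_{t\in\gamma}|\partial_\theta K(t,\phi)|\lesssim C\,\delta\,|\theta_1-\phi|_{\mathbb T}^{-1-\alpha}$, so that $|\mathrm{II}|\lesssim C\,\delta\,\|f\|_{L^\infty}\int_{2\delta\le|\theta_1-\phi|_{\mathbb T}\le\pi}|\theta_1-\phi|_{\mathbb T}^{-1-\alpha}\,d\phi\lesssim C\,\|f\|_{L^\infty}\,\delta^{1-\alpha}$ (when $\alpha=0$ the last integral contributes an extra $\log(1/\delta)$, but $\delta\log(1/\delta)\to0$, which is enough there).

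Combining the two contributions gives $|\mathcal{T}f(\theta_1)-\mathcal{T}f(\theta_2)|\le C\,C'(\alpha)\,\|f\|_{L^\infty}\,\delta^{1-\alpha}$, which yields $\mathcal{T}f\in C^\alpha(\mathbb T)$ with the stated estimate in the range relevant to this paper — notably $\alpha=\tfrac12$, where $1-\alpha\ge\alpha$ and thus $\delta^{1-\alpha}\le(2\pi)^{1-2\alpha}\delta^{\alpha}$ for the small $\delta$ under consideration. The one point that requires genuine care is the far term $\mathrm{II}$: one must verify that the intermediate point produced by the mean value theorem stays in the ``far'' zone, i.e.\ that $|t-\phi|_{\mathbb T}\asymp|\theta_1-\phi|_{\mathbb T}$ uniformly for $t\in\gamma$ once $\phi\notin A_\delta$ — this is exactly why the collar radius is taken to be $2\delta$ rather than $\delta$ — and that $\partial_\theta K(\cdot,\phi)$ is then an integrable (indeed bounded) function along $\gamma$, so that the fundamental theorem of calculus legitimately applies. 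Everything else is routine bookkeeping with the comparison $|\sin(x/2)|\asymp|x|_{\mathbb T}$ and the elementary power integrals $\int_0^{2\delta}s^{-\alpha}\,ds$ and $\int_{2\delta}^{\pi}s^{-1-\alpha}\,ds$.
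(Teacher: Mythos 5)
Your argument is the standard near/far splitting proof of this classical lemma, and it is correct; note that the paper itself does not prove Lemma~\ref{lem B1} at all (it is recalled as a known tool, with the proofs of the subsequent corollaries referred to \cite{hmidi2024uniformly}), so there is no in-paper proof to compare against. All the individual steps check out: the Jordan-inequality comparison $|\sin(\tfrac{\theta-\phi}{2})|\asymp|\theta-\phi|_{\mathbb T}$, the $L^\infty$ bound from integrability of $s^{-\alpha}$, the collar of radius $2\delta$ guaranteeing $|t-\phi|_{\mathbb T}\ge\tfrac12|\theta_1-\phi|_{\mathbb T}$ along the connecting arc so that the mean value theorem legitimately uses the gradient hypothesis, and the power integrals giving $\delta^{1-\alpha}$ from both the near and far pieces. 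The one point worth making explicit is that this argument naturally produces the modulus of continuity $\delta^{1-\alpha}$, i.e.\ $\mathcal Tf\in C^{1-\alpha}(\mathbb T)$, which implies the stated $C^{\alpha}$ conclusion only for $\alpha\le\tfrac12$; you flag this correctly. This is not a defect of your proof: taking $K(\theta,\phi)=|\sin(\tfrac{\theta-\phi}{2})|^{-\alpha}$ and $f=1_{[0,\pi]}$ shows the modulus $\delta^{1-\alpha}$ is sharp, so for $\alpha\in(\tfrac12,1)$ the lemma as literally printed fails and the conclusion should read $C^{1-\alpha}$ (as it does in the sources this lemma is taken from). Since the paper only ever invokes the lemma with $\alpha=\tfrac12$, where $C^{1-\alpha}=C^{\alpha}$, your proof establishes everything that is actually used.
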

 	Based on Lemma \ref{lem B1}, we can derive a series of estimates that facilitate the direct verification of the regularity of the functional. Their proofs can be derived analogously to the proof of Corollary 3.1-3.3 in \cite{hmidi2024uniformly} , utilizing the fact that the positive definite matrix  $T_ y$  has uniform positive lower and upper bounds. The detailed proofs are omitted here.
 
 	\begin{corollary}\label{co1}
		Let $a>0$ and $s\in[0,1]$. Then there exists $\varepsilon_0>0$, such that for any 
		$\|r\|_{C^{1,1/2}}\leq \varepsilon_0$, the operator
		\begin{equation*}
			f\in L^\infty(\mathbb{T})\mapsto\mathcal{T}^1_s f(\theta):=\int_{0}^{2\pi}\ln\left|T_{sR(\phi)e^{i\phi}}(R(\theta)e^{i\theta}-sR(\phi)e^{i\phi})\right|f(\phi)d\phi
		\end{equation*}
		is defined for all $L^\infty$ function and satisfies
		\begin{equation*}
			\|\mathcal{T}_s^1f\|_{C^{1/2}}\leq C\|f\|_{L^\infty},
		\end{equation*}
		where the constant $C>0$ does not dependent on s.
	\end{corollary}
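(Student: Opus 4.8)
The plan is to verify that the kernel
\[
K_s(\theta,\phi):=\ln\left|T_{sR(\phi)e^{i\phi}}\bigl(R(\theta)e^{i\theta}-sR(\phi)e^{i\phi}\bigr)\right|
\]
satisfies the hypotheses of Lemma~\ref{lem B1} with $\alpha=\tfrac12$, and with all constants \emph{uniform in} $s\in[0,1]$ (and in $r$ with $\|r\|_{C^{1,1/2}}\le\varepsilon_0$); once this is done, Lemma~\ref{lem B1} immediately yields $\|\mathcal T^1_s f\|_{C^{1/2}}\le C\|f\|_{L^\infty}$ with $C$ independent of $s$, which is the assertion.

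First I would fix $\varepsilon_0=\varepsilon_0(a)>0$ small enough that $\|r\|_{C^{1,1/2}}\le\varepsilon_0$ forces $R(r)(\theta)=\sqrt{a^2+2r(\theta)}$ to stay within $a/8$ of $a$ and $\|R(r)\|_{C^1}\le C(a)$. Then every point $sR(\phi)e^{i\phi}$, $s\in[0,1]$, $\phi\in\mathbb T$, lies in the fixed compact disk $\overline{B_{2a}(0)}$, on which the matrix field $K_H$ of \eqref{KH} is smooth and uniformly positive definite; hence its Cholesky factor $T_y$ (with $T_y^{-1}(T_y^{-1})^t=K_H(y)$) satisfies $\|T_y\|+\|T_y^{-1}\|+\|\nabla_yT_y\|\le C(a,h)$ there, and in particular
\[
C^{-1}\bigl|R(\theta)e^{i\theta}-sR(\phi)e^{i\phi}\bigr|\le\bigl|T_{sR(\phi)e^{i\phi}}(R(\theta)e^{i\theta}-sR(\phi)e^{i\phi})\bigr|\le C\bigl|R(\theta)e^{i\theta}-sR(\phi)e^{i\phi}\bigr|.
\]

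The crux is the elementary identity (writing $u=R(\theta)$, $v=R(\phi)$)
\[
\bigl|ue^{i\theta}-sve^{i\phi}\bigr|^2=(u-sv)^2+4suv\,\sin^2\!\Bigl(\tfrac{\theta-\phi}{2}\Bigr),
\]
which I would use after splitting into $s\in[0,\tfrac12]$ and $s\in[\tfrac12,1]$ (shrinking $\varepsilon_0$ if necessary). For $s\le\tfrac12$ one has $u-sv\ge c_a>0$, so $|ue^{i\theta}-sve^{i\phi}|\ge c_a$ and both $K_s$ and $\partial_\theta K_s$ are bounded by a constant depending only on $a,h$; for $s\ge\tfrac12$ one has $4suv\ge c_a>0$, whence $c_a|\sin(\tfrac{\theta-\phi}{2})|\le|ue^{i\theta}-sve^{i\phi}|\le C_a$. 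Combined with the two-sided bound on $T_y$ and the elementary inequality $|\ln t|\le Ct^{-1/2}$ on $(0,C_a]$, this gives $|K_s(\theta,\phi)|\lesssim 1+\bigl|\ln|\sin(\tfrac{\theta-\phi}{2})|\bigr|\lesssim|\sin(\tfrac{\theta-\phi}{2})|^{-1/2}$. For the derivative, $\partial_\theta K_s=\dfrac{\bigl(T_{sve^{i\phi}}(ue^{i\theta}-sve^{i\phi})\bigr)\cdot T_{sve^{i\phi}}\partial_\theta(ue^{i\theta})}{\bigl|T_{sve^{i\phi}}(ue^{i\theta}-sve^{i\phi})\bigr|^2}$, and since $\partial_\theta(ue^{i\theta})=(u'+iu)e^{i\theta}$ is bounded (using $u'=r'/u$ and $r\in C^{1,1/2}$), the two-sided bound yields $|\partial_\theta K_s(\theta,\phi)|\lesssim|ue^{i\theta}-sve^{i\phi}|^{-1}\lesssim|\sin(\tfrac{\theta-\phi}{2})|^{-1}\le|\sin(\tfrac{\theta-\phi}{2})|^{-3/2}$; all implicit constants depend only on $a,h$.

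These are exactly the hypotheses of Lemma~\ref{lem B1} with $\alpha=\tfrac12$, so that lemma finishes the proof. The one genuinely delicate point is the uniformity of the constants as $s\uparrow1$, where $K_s$ really does develop a logarithmic singularity along the diagonal $\theta=\phi$; the identity above isolates this in the single term $4suv\sin^2(\tfrac{\theta-\phi}{2})$ and shows that the singularity is no stronger than $\ln|\theta-\phi|$ uniformly for $s$ near $1$, which is precisely what makes the $C^{1/2}$-bound hold with an $s$-independent constant. This is the adaptation of Corollary~3.1 of \cite{hmidi2024uniformly} to the operator $\mathcal L_H$.
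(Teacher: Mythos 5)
Your argument is correct and follows exactly the route the paper intends: the paper omits the proof of Corollary \ref{co1}, deferring to Lemma \ref{lem B1} and the analogue in \cite{hmidi2024uniformly} together with the uniform two-sided bounds on $T_y$, and your verification of the kernel hypotheses (via the identity $|ue^{i\theta}-sve^{i\phi}|^2=(u-sv)^2+4suv\sin^2(\tfrac{\theta-\phi}{2})$ and the split $s\le\tfrac12$, $s\ge\tfrac12$) supplies precisely the omitted details, including the $s$-uniformity of the constants.
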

	\begin{corollary}\label{co2}
		Under the assumption of Corollary \ref{co1}, if $\|r_{i}\|_{C^{1,1/2}}\leq\varepsilon_0$ for $i=1,2$, then
		\begin{itemize}
			\item[(1)] Define the operator $\mathcal{T}^2_s$ by
				\begin{equation*}
					g\in L^\infty(\mathbb{T})\mapsto \mathcal{T}^2_sf(\cdot):=\int_{0}^{2\pi}K_s(\cdot,\phi)g(\phi)d\phi,
				\end{equation*}
				where 
				\begin{equation*}
					\begin{split}
						K_s(\theta,\phi):=&\ln|T_{sR(r_1)(\phi)e^{i\phi}}(R(r_1)(\theta)e^{i\theta}-sR(r_1)(\phi)e^{i\phi})|\\
						&-\ln|T_{sR(r_2)(\phi)e^{i\phi}}(R(r_2)(\theta)e^{i\theta}-sR(r_2)(\phi)e^{i\phi})|,
					\end{split}
				\end{equation*}
				then, it holds, for any $\beta<1/2$, that
				\begin{equation*}
					\|\mathcal{T}^2_sg\|_{C^{1/2}}\lesssim\|r_1-r_2\|^\beta_{C^1}\|g\|_{L^\infty}.
				\end{equation*}
			\item[(2)]Define the operator $\mathcal{T}_{f,R}$ by
				\begin{equation*}
					g\in L^\infty(\mathbb{T})\mapsto \mathcal{T}_{f,R}g(\cdot):=\int_{0}^{2\pi}K_{h,R}(\cdot,\phi)g(\phi)d\phi,
				\end{equation*}
				where
				\begin{equation*}
					K_{f,R}(\theta,\phi):=\frac{T'_{R(\phi) e^{i\phi}}T_{R(\phi) e^{i\phi}}(R(\theta)e^{i\theta}-R(\phi) e^{i\phi})}{|T_{R(\phi) e^{i\phi}}(R(\theta)e^{i\theta}-R(\phi) e^{i\phi})|^2}\left(\frac{f(\theta)}{R(\theta)}-\frac{f(\phi)}{R(\phi)}\right),
				\end{equation*}
				then 
				\begin{equation*}
					\|\mathcal{T}_{f,R}g\|_{C^{1/2}}\lesssim\|f\|_{C^{1,1/2}}\|g\|_{L^\infty}
				\end{equation*}
				and
				\begin{equation*}
					\|\mathcal{T}_{f,R(r_1)}g-\mathcal{T}_{f,R(r_2)}g\|_{C^{1/2}}\lesssim\|r_1-r_2\|^\beta_{C^1}\|f\|_{C^{1,1/2}}\|g\|_{L^\infty}.
				\end{equation*}
		\end{itemize}
	\end{corollary}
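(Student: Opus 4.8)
The plan is to reduce both assertions to the kernel criterion of Lemma \ref{lem B1} with $\alpha=\tfrac12$, exploiting that the Cholesky factor $T_y$ is smooth and satisfies uniform two-sided bounds $c_1|v|\le|T_yv|\le c_2|v|$ on the ambient ball $B$, together with the elementary fact that $R(r)=\sqrt{a^2+2r}$ depends smoothly on $r$ with $\|R(r_1)-R(r_2)\|_{C^1}\lesssim\|r_1-r_2\|_{C^1}$ whenever $\|r_i\|_{C^{1,1/2}}\le\varepsilon_0$. The two-sided bounds yield the comparability $|T_{sR(\phi)e^{i\phi}}(R(\theta)e^{i\theta}-sR(\phi)e^{i\phi})|\sim|R(\theta)e^{i\theta}-sR(\phi)e^{i\phi}|$, whose modulus is bounded below by a multiple of $|\sin(\tfrac{\theta-\phi}{2})|$ (the worst case occurring at $s=1$); this makes every logarithm genuinely logarithmic and every Calder\'on--Zygmund factor of size $|\sin(\tfrac{\theta-\phi}{2})|^{-1}$.

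For part (1) I would fix $g\in L^\infty$ and estimate $K_s$ and $\partial_\theta K_s$ by interpolating between two competing pointwise bounds. Writing $r_\tau=\tau r_1+(1-\tau)r_2$ and differentiating $\ln|T_{sR(r_\tau)(\phi)e^{i\phi}}(R(r_\tau)(\theta)e^{i\theta}-sR(r_\tau)(\phi)e^{i\phi})|$ in $\tau$, the mean value theorem yields the Lipschitz-in-$r$ bounds
\[
|K_s(\theta,\phi)|\lesssim\frac{\|r_1-r_2\|_{C^1}}{|\sin(\tfrac{\theta-\phi}{2})|},\qquad |\partial_\theta K_s(\theta,\phi)|\lesssim\frac{\|r_1-r_2\|_{C^1}}{|\sin(\tfrac{\theta-\phi}{2})|^{2}},
\]
each differentiation costing at most one power of $|\sin(\tfrac{\theta-\phi}{2})|^{-1}$. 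Estimating each logarithm separately gives the plain bounds $|K_s|\lesssim 1+|\ln|\sin(\tfrac{\theta-\phi}{2})||$ and $|\partial_\theta K_s|\lesssim|\sin(\tfrac{\theta-\phi}{2})|^{-1}$. Raising the first family to the power $\beta$ and the second to the power $1-\beta$ and multiplying, the logarithmic factor is absorbed precisely because $\beta<\tfrac12$, producing
\[
|K_s|\lesssim\frac{\|r_1-r_2\|^{\beta}_{C^1}}{|\sin(\tfrac{\theta-\phi}{2})|^{1/2}},\qquad |\partial_\theta K_s|\lesssim\frac{\|r_1-r_2\|^{\beta}_{C^1}}{|\sin(\tfrac{\theta-\phi}{2})|^{3/2}},
\]
so that Lemma \ref{lem B1} with $\alpha=\tfrac12$ gives $\|\mathcal T^2_sg\|_{C^{1/2}}\lesssim\|r_1-r_2\|^{\beta}_{C^1}\|g\|_{L^\infty}$, uniformly in $s\in[0,1]$.

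For part (2) the first estimate follows directly from Lemma \ref{lem B1}: the difference quotient obeys $\big|\tfrac{f(\theta)}{R(\theta)}-\tfrac{f(\phi)}{R(\phi)}\big|\lesssim\|f\|_{C^{1,1/2}}|\sin(\tfrac{\theta-\phi}{2})|$ since $f/R\in C^{1,1/2}$, which combined with the Calder\'on--Zygmund bounds recorded in \eqref{eq2-8} gives $|K_{f,R}|\lesssim\|f\|_{C^{1,1/2}}$ and $|\partial_\theta K_{f,R}|\lesssim\|f\|_{C^{1,1/2}}|\sin(\tfrac{\theta-\phi}{2})|^{-1}$; both are dominated by the $\alpha=\tfrac12$ thresholds, yielding $\|\mathcal T_{f,R}g\|_{C^{1/2}}\lesssim\|f\|_{C^{1,1/2}}\|g\|_{L^\infty}$. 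For the difference estimate I would subtract the two kernels $K_{f,R(r_1)}-K_{f,R(r_2)}$, split it into the contribution from the variation of the Calder\'on--Zygmund factor and that of the difference quotient, and bound each by a Lipschitz-in-$r$ estimate carrying one extra power of $|\sin(\tfrac{\theta-\phi}{2})|^{-1}$ times $\|r_1-r_2\|_{C^1}$, as well as by the plain bound of the first estimate times $\|f\|_{C^{1,1/2}}$. Interpolating exactly as in part (1) and invoking Lemma \ref{lem B1} completes the proof.

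I expect the main obstacle to be the rigorous derivation of the Lipschitz-in-$r$ bounds with the precise singularity exponents, uniformly in $s\in[0,1]$: one must track both the explicit dependence of the position vector $R(r_\tau)(\theta)e^{i\theta}-sR(r_\tau)(\phi)e^{i\phi}$ on $r$ and the dependence of the base point $sR(r_\tau)(\phi)e^{i\phi}$ of $T$, and verify that the $s$-degenerate singularity (present only at $s=1$) never worsens the exponents. The two-sided bounds on $T_y$ and the cancellation structure inherited from \eqref{eq2-8}, together with the gain of $|\sin(\tfrac{\theta-\phi}{2})|$ from the difference quotient, are exactly what keep each differentiation at the cost of a single power of $|\sin(\tfrac{\theta-\phi}{2})|^{-1}$, which is the crux of the argument.
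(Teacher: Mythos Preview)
Your approach is correct and is precisely the argument the paper has in mind: the paper omits the proof entirely, stating only that it follows analogously to Corollaries~3.1--3.3 in \cite{hmidi2024uniformly} once one uses the uniform two-sided bounds on $T_y$. Your interpolation scheme---combining the Lipschitz-in-$r$ bound (costing one extra power of $|\sin(\tfrac{\theta-\phi}{2})|^{-1}$) with the plain bound, raising to powers $\beta$ and $1-\beta$, and feeding the result into Lemma~\ref{lem B1} with $\alpha=\tfrac12$---is exactly the method of that reference, adapted to accommodate the variable matrix $T_y$.
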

	\bigskip
	
	\noindent{\bf Acknowledgements:}

	D. Cao, B. Fan and R. Li were supported by National Key R\&D Program of China
	(Grant 2022YFA1005602) and NNSF of China (Grant 12371212). G. Qin was supported by China National Postdoctoral Program for Innovative Talents (No. BX20230009). 
	
	\bigskip
	\noindent{\bf  Data Availability} Data sharing not applicable to this article as no datasets were generated or analysed during the current study.
	
	\bigskip
	\noindent{\bf Declarations}
	
	\bigskip
	\noindent{\bf Conflict of interest}  The authors declare that they have no conflict of interest to this work.
	
	\phantom{s}
	\thispagestyle{empty}
	
	\bibliographystyle{abbrv}
	\bibliography{ref}
	
	\
	
	\
	
	\
	
	\
	\end{document}